\newtheorem{theorem}{Theorem}
\newtheorem{lemma}[theorem]{Lemma}
\newtheorem{prop}[theorem]{Proposition}
\newtheorem{cor}[theorem]{Corollary}
\newtheorem{theo}{Theorem}
\newtheorem*{ex}{Example}
\newtheorem{rem}{Remark}
\newtheorem{defi}{Definition}
\newcommand{\field}[1]{\mathbb{#1}}
\newcommand{\EE}{\field{E}}
\newcommand{\Z}{\field{Z}}
\newcommand{\N}{\field{N}}
\newcommand{\PP}{\field{P}}
\newcommand{\wV}{\widetilde{V}}
\newcommand{\wZ}{\widetilde{Z}}
\newcommand{\wW}{\widetilde{W}}
\newcommand{\wK}{\overline{K}}
\newcommand{\wpi}{\overline{\pi}}
\newcommand{\wsi}{\overline{\si}}
\newcommand{\wrho}{\overline{\rho}}
\newcommand{\wzeta}{\overline{\zeta}}
\renewcommand{\tilde}{\widetilde}
\newcommand{\F}{\mathcal{F}}
\newcommand{\la} {\lambda}
\newcommand{\si}{\sigma}
\newcommand{\ga}{\gamma}
\newcommand{\om}{\omega}
\newcommand{\Om}{\Omega}
\newcommand{\eps}{\varepsilon}
\renewcommand{\kappa}{\varkappa}
\renewcommand{\epsilon}{\varepsilon}
\newcommand{\won}{{\boldsymbol 1}}
\newcommand{\cal}{\mathcal}
\newcommand{\geo}{\mbox{\rm Geom(1/2)} }
\newcounter{constante}
\newcommand{\con}[1]{
\immediate\write 1{\noexpand\newlabel{#1}{{\theconstante}{\theconstante}}}
                    c_{\theconstante}
                    \stepcounter{constante}
                   }
\begin{document}

\setcounter{page}{1}

\title[Excited random walks] {Positively and negatively excited random
  walks on integers, with branching processes}

\author{Elena Kosygina and Martin P.W.\ Zerner} \thanks{\textit{2000
    Mathematics Subject Classification.}  60K35, 60K37, 60J80.  }
\thanks{\textit{Key words:}\quad Central limit theorem, excited random
  walk, law of large numbers, positive and negative cookies,
  recurrence, renewal structure, transience.  }
  \thanks{Acknowledgment: E.\ Kosygina's work was partially supported by the CUNY Research Foundation, PSC-CUNY award
  \# 69580-00-38.}
\begin{abstract}
  We consider excited random walks on $\Z$ with a bounded number of
  i.i.d.\ cookies per site which may induce drifts both to the left
  and to the right.
   We extend the criteria for recurrence and transience by M.\,Zerner
  and for positivity of speed by A.-L.\,Basdevant and A.\,Singh to
  this case and also prove an annealed central limit theorem.
The proofs are based on results from the literature concerning
branching processes with migration and make use of a certain renewal
structure.
\end{abstract}
\maketitle

\section{Introduction}
We consider nearest-neighbor random walks on the one-dimensional
integer lattice in an i.i.d.\ cookie environment with a uniformly
bounded number of cookies per site. The uniform bound on the number of
cookies per site will be denoted by $M\ge 1$, $M\in\N$.  Informally
speaking, a cookie environment is constructed by placing a pile of
cookies at each site of the lattice (see Figure~\ref{informal}). 
The piles of cookies represent the transition probabilities of the
random walker: upon each visit to a site the walker consumes the
topmost cookie from the pile at that site and makes a unit step to the
right or to the left with probabilities prescribed by that cookie.
If the cookie pile at the current site is empty the walker makes a
unit step to the right or to the left with equal probabilities.
\begin{figure}[h]
  \centering
  \epsfig{file=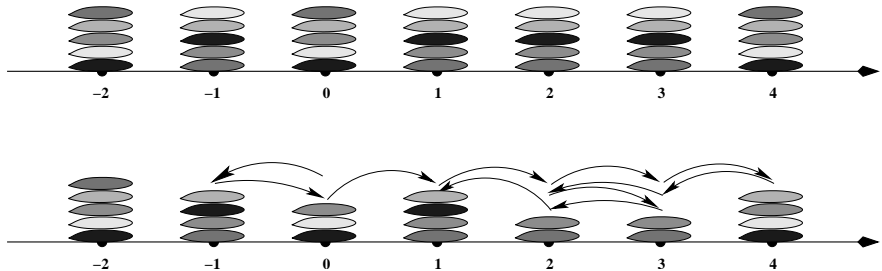,height=3.5cm, angle=0}
\caption{\footnotesize The top picture is an example of an i.i.d.\ cookie
environment with $M=5$, which consists of two types of cookie piles.
An independent toss of a fair coin determines which type of cookie
pile is placed at each site of the lattice. Various shades of gray
allude to different transition probabilities associated to different
cookies. The bottom picture shows the first few possible steps of a
random walker in this cookie environment starting at 0.}
\label{informal}
\end{figure}  

A cookie will be called positive (resp.\ negative) if its consumption
makes the walker to go to the right with probability larger (resp.\
smaller) than 1/2.  A cookie which is neither positive nor negative
will be called a placebo.  Placebo cookies allow us to assume without
loss of generality that each pile originally consists of exactly $M$
cookies.  Unless stated otherwise, the random walker always starts at
the origin.

The term ``excited random walk" was introduced by Benjamini and Wilson
in \cite{bewi}, where they considered random walks on $\Z^d$, $d\ge
1$, in an environment of identical cookies, one per each site.
Allowing more (or fewer) than one cookie per site and randomizing the
environment naturally gave rise to the multi-excited random walk model
in random cookie environments. We refer to \cite{Ze05} and \cite{Ze06} 
for the precise description
and first results.
It was clear then that this new model exhibits a very interesting behavior
 for $d=1$.  We shall mention some of the results for $d\ge 2$
in Section~\ref{mult} and now concentrate on the one-dimensional case.

The studies of excited random walks on integers were continued in \cite{MPV06},
\cite{BS06}, and \cite{BS07}.  \cite{AR05} deals with numerical
simulations of this model.  In all papers mentioned above a (possible)
bias introduced by the consumption of a cookie was assumed to be only
in one direction, say, positive.  The recurrence and transience,
strong law of large numbers \cite{Ze05}, conditions for positive
linear speed \cite{MPV06}, \cite{BS06}, and the rates of escape to
infinity for transient walks with zero speed \cite{BS07} are now well
understood.  Yet some of the methods and facts used in the proofs (for
example, comparison with simple symmetric random walks, submartingale
property) depend significantly on this ``positive bias'' assumption.
The main novelty of the current paper is in considering cookie
environments, which may induce positive or negative drifts at
different sites or even at the same site on successive visits. Our
main results are the recurrence/transience criterion (Theorem 1),
the criterion for positive linear speed (Theorem 2) 
and an annealed central limit theorem (Theorem 3). The 
first two theorems are extensions of those for non-negative cookie
environments but we believe that this is a purely one-dimensional
phenomenon. Moreover, in Section~\ref{mult} we give an example, which
shows that, at least for $d\ge 4$, the criteria for recurrence or
transience and for positive linear speed can not depend just on a
single parameter, the average total drift per site (see (\ref{D})).
The order of the cookies in the pile should matter as well.

The proofs are based on the connections to branching processes with
migration. Branching processes allowing both immigration and emigration
were studied by several authors in the late 70-ties through about the
middle of the 90-ties, and we use some of the results from the
literature (Section~\ref{lit}). See the review paper \cite{VZ93} for more
results and an extensive list of references up to about 1990.  
The connection between one-dimensional random walks and branching
processes was observed long time ago.  In particular, it was used for
the study of random walks in random environments, see e.g.\
\cite{KKS75}. In the context of excited random walks, this idea was employed
recently in \cite{BS06}, \cite{BS07} (still under the ``positive
bias'' assumption).  In the present paper we are using results from
the literature about branching processes with migration in a more
essential way than \cite{BS06} and \cite{BS07}.  One of our tasks is
to show how to translate statements about excited random walks  into statements for a
class of branching processes with migration which have been studied in
the past.

Let us now describe our model, which we shall abbreviate by ERW, more precisely. 
A cookie environment $\om$ with $M$ cookies per site
$z\in\Z$ is an element of
\begin{align*}
  \Omega_M:=\big\{((\om(z,i))_{i\in\N})_{z\in\Z}
 \mid &\ \om(z,i)\in[0,1],
  \forall i\in\{1,2,\dots,M\}\\
  &\text{\ and}\  
  \om(z,i)=1/2, \forall i>M,\ \forall z\in\Z\big\}.
\end{align*}
The purpose of $\om(z,i)$ is to serve as the transition probability from
$z$ to $z+1$ of a nearest-neighbor ERW upon the $i$-th
visit to a site $z$.  More precisely, for fixed $\omega\in \Omega_M$
and $x\in \Z$ an ERW $(X_n)_{n\ge 0}$ starting from $x$
in the cookie environment $\omega$ is a process on a suitable
probability space with probability measure $P_{x,\om}$ which
satisfies:
\begin{align*}
  P_{x,\omega}[X_0=x]&=1,\\P_{x,\omega}[X_{n+1}=X_n+1\,|(X_i)_{0\le i\le n}]&=
  \omega(X_n,\#\{i\le n\,|X_i=X_n\}),\\ 
  P_{x,\omega}[X_{n+1}=X_n-1\,|(X_i)_{0\le i\le n}]&=
  1-\omega(X_n,\#\{i\le n\,|X_i=X_n\}).
\end{align*}
The cookie environment $\om$ may be chosen at random itself according
to a probability measure on $\Om_M$, which we shall denote by $\PP$,
with the corresponding expectation operator $\EE$.  Unless stated
otherwise, we shall make the following assumption on $\PP$:
\begin{equation}\label{H0}
\mbox{The sequence $(\om(z,\cdot))_{z\in\Z}$ is i.i.d.\ under $\PP$.}
\end{equation}
Note that assumption (\ref{H0}) does not imply independence between
different cookies at the same site but only between cookies at
different sites, see also Figure \ref{informal}.  To avoid degenerate
cases we shall also make the following mild ellipticity assumption on
$\PP$:
\begin{equation}
\label{elli} 
  \EE\left[\prod_{i=1}^M
  \om(0,i)\right]>0\ \text{ and }\ \EE\left[\prod_{i=1}^M
  (1-\om(0,i))\right]>0.
\end{equation}
After consumption of a cookie $\om(z,i)$ the random walk is displaced
on $P_{x,\om}$-average by $2\om(z,i)-1$. This average displacement, or
drift, is positive for positive cookies and negative for negative
ones. The consumption of a placebo cookie results in a symmetric
random walk step. Averaging the drift over the environment and summing
up over all cookies at one site defines the parameter
\begin{equation}
  \label{D}
  \delta\ :=\ \EE\left[\sum_{i\ge1}(2\om(0,i)-1)\right]\ =\ \EE\left[\sum_{i=1}^M(2\om(0,i)-1)\right],
\end{equation}
which we shall call the \textit{average total drift per site}. It
plays a key role in the classification of the asymptotic behavior of
the walk as shown by the three main theorems of this paper.

Our first result extends \cite[Theorem 12]{Ze05} about recurrence and
transience for non-negative cookies to i.i.d.\ environments with a
bounded number of positive and negative cookies per site.
\begin{theorem}[Recurrence and transience]\label{rt0} 
  If $\delta \in[-1,1]$ then the walk is recurrent, i.e.\ for
  $\PP$-a.a.\ environments $\om$ it returns $P_{0,\om}$-a.s.\
  infinitely many times to its starting point. If $\delta >1$ then the
  walk is transient to the right, i.e.\ for $\PP$-a.a.\ environments
  $\om$, $X_n\to\infty$ as $n\to\infty$ $P_{0,\om}$-a.s.. Similarly,
  if $\delta <-1$ then the walk is transient to the left, i.e.\
  $X_n\to-\infty$ as $n\to\infty$.
\end{theorem}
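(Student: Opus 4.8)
The plan is to set up the classical correspondence between the excited random walk seen up to successive level‑crossings and a branching‑type process, and then to read off recurrence versus transience from whether that process survives or dies out. Concretely, suppose we want to decide whether the walk is transient to the right. Following the renewal/branching philosophy (as in \cite{KKS75}, \cite{BS06}), I would fix a level $n>0$ and, on the event that the walk reaches $n$ before returning below its starting point, record for each site $k\in\{0,1,\dots,n-1\}$ the number $U_k^{(n)}$ of times the walk steps from $k$ to $k-1$ \emph{before} its first visit to $n$. Reading these counts in the order $k=n-1,n-2,\dots,0$ one obtains a Markov chain: the number of down‑crossings of the edge $(k-1,k)$ is, given the number of down‑crossings of $(k,k+1)$, a sum of that many i.i.d.\ geometric random variables once the cookies at site $k$ are used up, plus a bounded perturbation coming from the (at most $M$) cookies. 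This is exactly a \emph{branching process with migration}: the offspring law is $\geo$ and the migration component (immigration or emigration of at most $M$ individuals per generation) is governed by the sign pattern of the cookies at that site. The key bookkeeping identity is that the time spent by the walk left of $n$ before hitting $n$ is (up to an additive term of order $n$) twice the total progeny $\sum_k U_k^{(n)}$, so the walk is transient to the right iff this branching process, started from $0$ individuals, has a positive probability of never dying out, and the expected hitting time of $n$ is finite iff the total progeny has finite expectation.

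The next step is to identify the relevant criticality parameter of this branching process with migration and match it to $\delta$. For a branching process with offspring mean $1$ (which is the case here since $\geo$ has mean $1$) and bounded migration, the literature (the results collected in Section~\ref{lit}, going back to the works surveyed in \cite{VZ93}) gives a sharp dichotomy in terms of the quantity $\theta:=2\lim_n \mathbb{E}[\text{net migration in generation }n\mid\text{large population}]/\sigma^2$, where $\sigma^2$ is the offspring variance; recurrence/transience of the associated walk flips exactly at a critical value of this parameter. A direct computation of the per‑site expected net migration from the definition of the cookie environment shows that this parameter equals $\delta$ (the telescoping $\sum_i(2\om(0,i)-1)$ is precisely the expected surplus of down‑steps forced, or prevented, by the cookies at a site relative to the symmetric walk), and that the critical threshold in the branching‑process classification corresponds to $|\delta|=1$. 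Thus $\delta<1$ will give almost sure extinction of the process associated with up‑crossings, hence $X_n$ does not go to $+\infty$; and $\delta>1$ will give positive survival probability together with, after invoking ergodicity of the environment and a Borel--Cantelli / zero‑one argument along the renewal times, almost sure transience to the right. The left case $\delta<-1$ is the mirror image under $X\mapsto -X$, $\om(z,i)\mapsto 1-\om(z,i)$, which by \eqref{elli} and \eqref{H0} is again an ERW of the same type but with drift $-\delta$.

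Finally, the recurrence statement for $\delta\in[-1,1]$ needs both ``not transient to the right'' ($\delta\le 1$) and ``not transient to the left'' ($\delta\ge -1$), together with the observation that a nearest‑neighbor walk on $\Z$ which is neither transient to $+\infty$ nor to $-\infty$ must oscillate, and an oscillating nearest‑neighbor walk visits every site, in particular the origin, infinitely often (here one uses that between a visit to $-m$ and a visit to $+m$ the walk must pass through $0$). Combining the two one‑sided branching‑process analyses at the boundary values $\delta=\pm1$ — where the associated process is critical and dies out almost surely but may have infinite expected extinction time — yields recurrence on the closed interval $[-1,1]$; the strictness of the inequalities in the transient regime comes from the strict criticality threshold $|\delta|=1$ in the migration‑process dichotomy. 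I expect the main obstacle to be the translation step itself: making the correspondence between the ERW before hitting level $n$ and a branching process with migration \emph{precise} when cookies at a single site may be both positive and negative and are consumed in a fixed order, so that the per‑generation migration is a bounded but not sign‑definite and not independent perturbation of a Galton--Watson step, and then checking that the hypotheses under which the cited branching‑process results hold (e.g.\ integrability/tightness conditions, aperiodicity) are guaranteed by the ellipticity assumption \eqref{elli} and the uniform bound $M$.
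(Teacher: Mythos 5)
Your high-level strategy matches the paper's: encode the walk's edge-crossings as a branching process with migration, identify the critical parameter $\theta=2\lambda/\sigma^2$ of that process with $\delta$, and then invoke the $\theta$-dichotomy from the branching-process literature (a.s.\ extinction iff $\theta\le1$) together with a zero-one/Borel--Cantelli upgrade and the left-right reflection. The identification $\theta=\delta$ and the treatment of the boundary cases $\delta=\pm1$ (critical migration, a.s.\ extinction) are also in line with the paper's Lemma~\ref{th} and Corollary~\ref{AA}.

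However, the specific branching process you set up is the wrong one for this theorem, and this is a genuine gap. You fix a level $n$, condition on $\{$reach $n$ before going below $0\}$, and read the \emph{downcrossing} counts $U_k^{(n)}$ backward from $n$ to $0$. That is precisely the decomposition the paper uses for the renewal/speed/CLT analysis (Section~\ref{rs}, the process $(D_k)$ in \eqref{Uk}), where the relevant question is \emph{how much} total progeny accumulates, as your time identity correctly records. But it cannot serve as a transience criterion: on the conditioning event the walk never steps from $0$ to $-1$, so $U_0^{(n)}=0$ deterministically, i.e.\ this process \emph{always} dies out by step $n$ regardless of whether the walk is transient or recurrent, and the claim ``transient to the right iff this process has positive survival probability'' fails. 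What is needed instead is the \emph{forward upcrossing} process of the first right excursion, $U_k=\#\{$upcrossings of $(k,k+1)$ before $T_0\}$ as in \eqref{U}, started from $U_0=1$. The crucial equivalence is $\{T_0=\infty\}=\{\forall k:\,U_k>0\}$ (up to null sets, using Lemma~\ref{neww}), which converts ``the first right excursion is infinite'' into ``this branching process never returns to $0$'' -- see the paper's Lemma~\ref{tree2} and the chain $(U_k)\to(V_k)\to(W_k)\to(Z_k)$. There is also a secondary issue you flag yourself but do not resolve: conditioning on $\{$reach $n$ without backtracking below $0\}$ is an $\omega$-dependent event, which breaks the clean $(\mu,\nu)$-branching structure; the paper avoids this entirely by working with the unconditional excursion and the coin-toss construction.
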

Trivial examples with $M=1$ and $\om(0,1)=0$ or $\om(0,1)=1$ show that
assumption (\ref{elli}) is essential for Theorem~\ref{rt0} to hold.

Our next result extends \cite[Theorem 1.1, Theorem 1.3]{MPV06} and
\cite[Theorem 1.1]{BS07} about the positivity of speed from spatially
uniform deterministic environments of non-negative cookies to i.i.d.\
environments with positive and negative cookies.
\begin{theorem}[Law of large numbers and ballisticity]\label{v0} 
  There is a deterministic $v\in[-1,1]$ such
  that the excited random walk 
satisfies  for $\PP$-a.a.\ environments $\om$,
\[\lim_{n\to\infty}
  \frac{X_n}{n}=v\quad P_{0,\om}\text{-a.s..}\] 
Moreover,
  $v<0$ for $\delta <-2$, $v=0$ for
  $\delta\in[-2,2]$ and $v>0$ for $\delta >2$.
\end{theorem}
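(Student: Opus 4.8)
\emph{Plan.} Since $|X_n|\le n$ we have $v\in[-1,1]$, and the reflection $x\mapsto-x$ sends an ERW with parameter $\delta$ to one with parameter $-\delta$; so it suffices to treat $\delta\ge0$ and show that $v$ exists and is deterministic, equals $0$ for $\delta\in[0,2]$, and is positive for $\delta>2$. For $\delta>1$ the walk is transient to the right by Theorem~\ref{rt0}, and I would exploit the renewal structure: call $\tau>0$ a regeneration time if $X_m<X_\tau$ for $m<\tau$ and $X_m\ge X_\tau$ for $m\ge\tau$. As is standard for such walks, transience together with the i.i.d.\ assumption~(\ref{H0}) gives a.s.\ infinitely many regeneration times $0<\tau_1<\tau_2<\cdots$, with $(X_{\tau_{k+1}}-X_{\tau_k},\,\tau_{k+1}-\tau_k)_{k\ge1}$ i.i.d.\ and $\EE[X_{\tau_2}-X_{\tau_1}]\in[1,\infty)$; the strong law then yields, $\PP$-a.s., $X_n/n\to v=\EE[X_{\tau_2}-X_{\tau_1}]/\EE[\tau_2-\tau_1]$ $P_{0,\om}$-a.s.\ (read as $0$ when the denominator is infinite). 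For $\delta\in[-1,1]$ the walk is recurrent, so it has neither positive nor, by reflection, negative speed, hence $v=0$ once $X_n/n$ is known to converge --- which will come from the branching-process analysis below.

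It remains to decide, for $\delta>1$, when $\EE[\tau_2-\tau_1]<\infty$. In a slab $[\tau_k,\tau_{k+1})$ the walk goes from $X_{\tau_k}$ to $X_{\tau_{k+1}}$ while making some number $L_k$ of leftward steps, so $\tau_{k+1}-\tau_k=(X_{\tau_{k+1}}-X_{\tau_k})+2L_k$; since $\EE[X_{\tau_2}-X_{\tau_1}]<\infty$, we get $v>0$ iff $\EE[L_1]<\infty$. Counting leftward steps by edge local times, $L_1=\sum_k V_k$, where $V_k$ is the number of leftward traversals of the edge $\{k-1,k\}$ within the slab --- a finite sum of a.s.-finite terms, vanishing for $k\le X_{\tau_1}$.

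The key point is that, read from right to left, $(V_k)$ is a branching process with migration: given $V_k=m$, the walk makes exactly $m+1$ rightward steps out of site $k-1$, and $V_{k-1}$ is the number of leftward steps it makes there --- i.e.\ the number of failures before the $(m+1)$-st success in a sequence of independent tosses whose first $M$ use the (random) cookie probabilities $\om(k-1,\cdot)$ and whose rest are fair. This is the encoding used under a one-sided sign condition in \cite{BS06}, \cite{BS07}, going back in the RWRE context to \cite{KKS75}. By~(\ref{H0}) each transition consumes a fresh, independent pile, so $(V_k)$ is a time-homogeneous Markov chain; beyond the first $M$ tosses each particle has an independent $\mathrm{Geom}(1/2)$ offspring law (mean $1$, variance $2$, so critical), while the cookies contribute a \emph{bounded} migration whose mean is affine in $\delta$ and --- the genuinely new feature here --- may be of either sign, positive cookies acting as emigration and negative cookies as immigration. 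In this language $\EE[L_1]<\infty$ becomes a finite-expectation statement for one excursion of this process away from $0$; and, in the recurrent regime, the super-linear growth of the analogous process run until the walk first reaches level $n$ forces the hitting time $T_n$ of level $n$ to satisfy $T_n/n\to\infty$, hence $X_n=o(n)$, which supplies the missing convergence in the recurrent case.

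Finally I would invoke the recurrence and moment results for near-critical branching processes with migration collected in Section~\ref{lit}: one such excursion has finite expected total progeny precisely when the mean migration crosses the relevant threshold, which in this normalization is $\delta=2$. Hence $v>0$ for $\delta>2$, while for $1<\delta\le2$ the expected total progeny --- so $\EE[L_1]$ and $\EE[\tau_2-\tau_1]$ --- is infinite and $v=0$; combined with the recurrent case and with reflection, this gives the full trichotomy. The main obstacle, I expect, is this last translation: building the branching process with two-sided migration cleanly (the bookkeeping of levels at and above $X_{\tau_1}$, including those a slab overshoots, and the convention when emigration would push the chain below $0$) and checking that, with $\mathrm{Geom}(1/2)$ offspring and a bounded, environment-dependent but generation-wise i.i.d.\ migration term, the critical value of the migration parameter in the cited theorems is exactly the one corresponding to $\delta=2$. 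The walk-to-branching correspondence and the renewal estimates are, by contrast, standard.
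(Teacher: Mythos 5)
Your plan for the transient case $\delta>1$ is essentially the paper's route: use the renewal structure to reduce $v>0$ to $E_0[\tau_2-\tau_1]<\infty$, express $\tau_2-\tau_1$ in terms of downcrossings in a slab, read those downcrossings right-to-left as a branching process with $\mathrm{Geom}(1/2)$ offspring and bounded migration coming from the cookies, and apply the migration-threshold results from Section~\ref{lit} (Corollary~\ref{AA} together with the identity $\theta=1-\delta$ of Lemma~\ref{th2}) to land on the threshold $\delta=2$. You are also right that the nontrivial part is comparing the walk-induced branching process, whose early generations are distorted by the cookies, with a genuine $(\mu,\nu)$-process to which the literature theorems apply; in the paper this is Lemma~\ref{jem}, proved via a coupling/invariant-measure comparison (Lemma~\ref{fini}).

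The gap is in the recurrent case $\delta\in[-1,1]$. You propose to obtain $X_n/n\to 0$ by showing $T_n/n\to\infty$ ``from super-linear growth of the analogous branching process run up to $T_n$.'' The cited results do not give this directly: Theorem~\ref{A} and Corollary~\ref{AA} yield tail/expectation statements for a \emph{single} excursion of the stopped process ($u_n$, $v_n$, finiteness of $v_\infty$), not an almost-sure superlinear lower bound on $\sum_k D_k^{(n)}$ across the many excursions a recurrent walk makes before first reaching level $n$. Infinite expected total progeny of one excursion does not by itself force $T_n/n\to\infty$ a.s., and making this step rigorous is genuinely extra work. The paper sidesteps all of this with Proposition~\ref{lln}, which adapts the argument of \cite[Theorem 13]{Ze05}: if $\sup_n X_n=\infty$ a.s.\ then $\limsup X_n/n\le 1/u_+$ and $\liminf X_n/n\ge 1/u_+$ whenever $u_+:=\sum_j P_0[T_{j+1}-T_j\ge j]<\infty$, with the mirror bounds for $u_-$; recurrence forces $u_+=u_-=\infty$ and hence $X_n/n\to 0$. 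This gives the existence of $v$ uniformly across all regimes and is a cleaner route than the one you sketch for the recurrent case; you should either adopt it or fill in a genuine a.s.\ argument for $T_n/n\to\infty$.
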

While Theorems \ref{rt0} and \ref{v0} give necessary and sufficient
conditions for recurrence, transience, and the positivity of the speed,
the following central limit theorem gives only a sufficient condition.
To state it we need to introduce the \textit{annealed}, or
\textit{averaged}, measure $P_x[\ \cdot\ ]:=\EE\left[P_{x,\omega}[\
  \cdot\ ]\right]$.
\begin{theorem}[Annealed central limit theorem]\label{clt}
  Assume that $|\delta|>4$. Let $v$ be the velocity given by Theorem
  \ref{v0} and define
  \[B_t^n:=\frac{1}{\sqrt{n}}(X_{\lfloor tn\rfloor}-\lfloor tn\rfloor
  v)\qquad \mbox{for $t\ge 0$.}
\]
Then $(B_t^n)_{t\ge 0}$ converges in law under $P_0$ to a
non-degenerate Brownian motion with respect to the Skorohod topology
on the space of cadlag functions. 
\end{theorem}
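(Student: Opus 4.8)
The plan is to reduce Theorem~\ref{clt} to a functional central limit theorem for a sum of i.i.d.\ blocks coming from a renewal decomposition of the trajectory; the role of the hypothesis $|\delta|>4$ is to make these blocks have finite second moments. By the reflection $x\mapsto-x$, which maps the class of cookie environments satisfying (\ref{H0}) and (\ref{elli}) onto itself and changes the sign of $\delta$, I may assume $\delta>4$; in particular $\delta>1$, so Theorem~\ref{rt0} guarantees that the walk is $P_0$-a.s.\ transient to the right. I would then invoke the renewal structure underlying the ballisticity part of Theorem~\ref{v0}: a time $\tau$ is a regeneration time if $X_m<X_\tau$ for all $m<\tau$ and $X_m\ge X_\tau$ for all $m\ge\tau$, and the successive regeneration times are listed as $0<\tau_1<\tau_2<\cdots$. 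Because the number of cookies per site is bounded by $M$, the first regeneration time is $P_0$-a.s.\ finite, the blocks $\big((X_{\tau_k+i}-X_{\tau_k})_{\,0\le i\le\tau_{k+1}-\tau_k}\big)_{k\ge1}$ are i.i.d.\ and independent of the initial segment $(X_i)_{0\le i\le\tau_1}$, and a generic block has, up to constants, the same tail behaviour as the initial one. The same structure identifies the velocity of Theorem~\ref{v0} as $v=\bar v/\bar\tau$, where $\bar v:=E_0[X_{\tau_2}-X_{\tau_1}]$, $\bar\tau:=E_0[\tau_2-\tau_1]$ (finite since $\delta>2$), and $E_0$ denotes expectation under $P_0$.

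The heart of the matter is the second-moment estimate
\[
E_0\big[(\tau_2-\tau_1)^2\big]<\infty\quad\text{and}\quad E_0\big[(X_{\tau_2}-X_{\tau_1})^2\big]<\infty\qquad\text{when }\delta>4 .
\]
This is exactly where branching processes with migration enter. As in the ``positive bias'' case, the number of steps the walk takes before a regeneration, as well as the number of edges it crosses in the meantime, can be read off from the total progeny and from the lifetime of an auxiliary branching process with migration whose offspring and migration mechanism is determined by a cookie pile and whose effective drift is governed by $\delta$. Feeding this correspondence into the tail estimates for branching processes with migration collected in Section~\ref{lit}, one obtains $P_0[\tau_1>n]=n^{-\delta/2+o(1)}$, so that $E_0[(\tau_2-\tau_1)^2]<\infty$ precisely because $\delta/2>2$; the spatial displacement $X_{\tau_2}-X_{\tau_1}$ is tied to the lifetime of the same process, which has a lighter polynomial tail, so its second moment is finite as well. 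Making this translation rigorous --- picking out the correct branching process with migration, verifying that it belongs to a class already treated in the literature, and extracting the sharp exponent $\delta/2$ --- is the step I expect to be the main obstacle.

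Granting the moment bounds, the invariance principle follows by standard arguments. Set $W_k:=X_{\tau_k}-X_{\tau_{k-1}}$, $L_k:=\tau_k-\tau_{k-1}$ and $Y_k:=W_k-vL_k$ for $k\ge2$; the $Y_k$ are i.i.d.\ with $E_0[Y_2]=\bar v-v\bar\tau=0$ and $E_0[Y_2^2]<\infty$. Donsker's theorem then gives convergence in law of $\big(n^{-1/2}\sum_{k=2}^{\lfloor tn\rfloor}Y_k\big)_{t\ge0}$ to a Brownian motion. Next, the strong law yields $\tau_k/k\to\bar\tau$ $P_0$-a.s., hence $N_n/n\to1/\bar\tau$ $P_0$-a.s.\ for the number $N_n:=\sup\{k:\tau_k\le n\}$ of completed regenerations by time $n$; a functional random time change (using the a.s.\ convergence of $N_n/n$ together with the monotonicity of $n\mapsto N_n$) turns the previous statement into an invariance principle for $\big(n^{-1/2}(X_{\tau_{N_{\lfloor tn\rfloor}}}-v\,\tau_{N_{\lfloor tn\rfloor}})\big)_{t\ge0}$. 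One then replaces $\tau_{N_{\lfloor tn\rfloor}}$ by $\lfloor tn\rfloor$: both the space error $|X_{\lfloor tn\rfloor}-X_{\tau_{N_{\lfloor tn\rfloor}}}|$ and the time error $\lfloor tn\rfloor-\tau_{N_{\lfloor tn\rfloor}}$ are bounded by $\max_{k\le N_n+1}L_k$, which is $o(\sqrt n)$ in $P_0$-probability since $E_0[L_2^2]<\infty$, while the initial segment contributes only $O(1)$. This gives convergence of $(B_t^n)_{t\ge0}$ under $P_0$, in the Skorohod topology, to a Brownian motion with variance $\bar\tau^{-1}\,\mathrm{Var}\big(X_{\tau_2}-X_{\tau_1}-v(\tau_2-\tau_1)\big)$. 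This variance is strictly positive --- hence the limit is non-degenerate --- because (\ref{elli}) prevents $X_{\tau_2}-X_{\tau_1}-v(\tau_2-\tau_1)$ from being $P_0$-a.s.\ constant.
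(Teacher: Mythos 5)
Your overall strategy matches the paper's: use the regeneration times $\tau_1<\tau_2<\cdots$ with i.i.d.\ increments, reduce the theorem to the second-moment bound $E_0[(\tau_2-\tau_1)^2]<\infty$, prove that bound via a branching process with migration whose effective drift is governed by $\delta$, and then run a standard i.i.d.-block invariance principle. You spell out the last step (Donsker plus a random time change, with $\max_{k\le N_n+1}L_k=o(\sqrt n)$ in probability coming from $E_0[L_2^2]<\infty$); the paper instead simply cites the RWRE and once-ERW literature for this conversion, so this part of your write-up is fine, if slightly more verbose than needed.

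The genuine gap is the one you flag yourself: you do not prove $E_0[(\tau_2-\tau_1)^2]<\infty$, and the heuristic you offer for it is not quite what the paper does. You suggest obtaining a tail estimate $P_0[\tau_1>n]=n^{-\delta/2+o(1)}$ and concluding $E_0[(\tau_2-\tau_1)^2]<\infty$ from $\delta/2>2$. The paper never proves such a tail asymptotic (and $\tau_1$ is in any case the wrong random variable --- one needs moments of $\tau_2-\tau_1$). Instead the argument is a chain of three reductions: Lemma \ref{mom} shows the $p$-th moment of $\tau_2-\tau_1$ is finite iff the $p$-th moment of $\sum_k D_k$ is; Lemma \ref{ffs} shows $(D_k)_k$ has the same law as the stopped process $(\wV_k)_k$ built from the \emph{backward} excursion between regenerations; and Lemma \ref{lclt} proves directly that $E_0[(\sum_k\wV_k)^2]<\infty$ when $\delta>4$, by comparing $(\wV_k)$ to a $(\geo,\nu)$-branching process, splitting via Minkowski and H\"older, and combining a lifetime tail estimate $P_0[N^{(W)}>k]\le c\,k^{-\delta+\eps}$ (from Theorem \ref{A}(iv) with $\theta=1-\delta<-3$, via Lemma \ref{th2}) with polynomial moment bounds $E_0[Z_k^\ell]\le c\,k^\ell$ proved by induction (Lemma \ref{multi}). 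Your sketch leaves all of this --- ``picking out the correct branching process, verifying it belongs to a treated class, and extracting the exponent'' --- unproved, and it is precisely the technical content of the theorem. You also need the branching process to come from the \emph{down}-crossings between $\tau_1$ and $\tau_2$ (the process $(D_k)$), not from a forward excursion; this is what makes Lemma \ref{ffs} nontrivial.
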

The variance of the Brownian motion in Theorem \ref{clt} will be
further characterized in Section \ref{rs}, see (\ref{si2}). 

Let us describe how the present article is organized. Section~\ref{lit}
introduces the main tool for the proofs, branching processes with
migration, and quotes the relevant results from the literature. In
Sections~\ref{sunny} and \ref{brpr} we describe the relationship between ERW and branching
processes with migration and introduce the necessary notation.  In Section~\ref{rectra} we use this relationship to
translate results from Section~\ref{lit} about branching processes into
results for ERW concerning recurrence and transience, thus proving
Theorem~\ref{rt0}.  In Section \ref{rs} we introduce a renewal structure for
ERW, similar to the one which appears in the study of random walks in
random environments (RWRE), and relate it to branching processes with
migration. In Sections~\ref{Slln} and \ref{sclt} we use this renewal structure to deduce
Theorems~\ref{v0} and \ref{clt}, respectively, from results stated in
Section~\ref{lit}.  The final section contains some concluding remarks and
open questions.

Throughout the paper we shall denote various 
constants by $c_i\in(0,\infty)$, $i\ge 1$.  
\section{Branching processes with migration -- results from the
  literature}\label{lit}
In this section we define a class of branching processes with
migration and quote several results from the literature. We chose to
give the precise statements of the results that we need since some of
the relevant papers are not readily available in English.  
\begin{defi}\label{defi} {\rm  Let $\mu$ and $\nu$ be probability measures on
    $\N_0:=\N\cup\{0\}$ and $\Z$, respectively,
 and let $\xi_{i}^{(j)}$ and $\eta_k$\ $(i,j\ge 1,\ k\ge 0)$ be 
    independent random variables such that each $\xi_{i}^{(j)}$ has
    distribution $\mu$ and each $\eta_k$ has distribution $\nu$. Then
    the process $(Z_k)_{k\ge 0}$, recursively defined by
\begin{equation}\label{Z}
Z_0:=0,\qquad Z_{k+1}:=\xi^{(k+1)}_1+\ldots+\xi^{(k+1)}_{Z_k+\eta_k},\quad k\ge 0,
\end{equation}
is said to be a \textit{$(\mu,\nu)$-branching process} with
\textit{offspring distribution} $\mu$ and \textit{migration
  distribution} $\nu$.  (Here we make an agreement that
$\xi_1^{(k+1)}+\ldots+\xi_i^{(k+1)}=0$ if $i\le 0$.) An offspring
distribution $\mu$ which we shall use frequently is the geometric distribution
with parameter 1/2 and support $\N_0$. It is denoted by \geo.}
\end{defi}
Note that any $(\mu,\nu)$-branching process is a time homogeneous
Markov chain, whose distribution is determined by $\mu$ and $\nu$.
More precisely, if at time $k$ the size of the population is $Z_k$
then (1) $\eta_k$ individuals immigrate or $\min\{Z_k,|\eta_k|\}$
individuals emigrate depending on whether $\eta_k\ge 0$ or $\eta_k<0$
respectively, and (2) the resultant $(Z_k+\eta_k)_+$ individuals
reproduce independently according to the distribution $\mu$.  This
determines the size $Z_{k+1}$ of the population at time $k+1$.

In the current paper we are interested in the case when both the
immigration and the emigration components are non-trivial and the
number of emigrants is bounded from above.  This bound will be the
same as the bound $M$ on the number of cookies per site. 
We shall assume that
\begin{equation}\label{-M}
\nu(\N)>0\quad\text{and}\quad \nu(\{k\in\Z\mid k\ge -M\})=1.
\end{equation}
Denote the average migration by
\begin{equation}\label{defla}
\lambda:=\sum_{k\ge -M} k\, \nu(\{k\})
\end{equation} 
and the moment generating function of the offspring distribution by
\[f(s):=\sum_{k\ge 0}\mu(\{k\})\, s^k,\quad s\in[0,1].\] In addition
to (\ref{-M}), we shall make the following assumptions on the measures
$\mu$ and $\nu$:
\begin{itemize}
\item [(A)]\qquad $f(0)>0$,\quad $f'(1)=1$,\quad
  $b:=f''(1)/2<\infty$,\quad $\lambda<\infty$;
\item [(B)]\qquad ${\displaystyle \sum_{k\ge 1} \mu(\{k\})\,k^2\,
    \ln k <\infty}$.
\end{itemize}
Note that $\mu=\geo$ satisfies condition (A) on the moment generating
function $f$ with $b=1$. It also satisfies (B). 

Next we state a result from the literature, which relates the limiting
behavior of the process $(Z_k)_{k\ge 0}$ to the value of the parameter
\begin{equation}\label{theta}
\theta:=\frac{\lambda}{b}.
\end{equation}
At first, introduce the stopped process $(\wZ_k)_{k\ge 0}$. Let
\begin{equation}
 N^{(Z)}:=\inf\{k\ge 1\mid Z_{k}=0\}\quad 
  \mbox{and}\quad
\label{wZ}
\wZ_k:=Z_{k}\won_{\{k< N^{(Z)}\}}.
\end{equation}
Note that the process $(\wZ_k)_{k\ge 0}$ follows $(Z_k)_{k\ge 0}$ until the first time $(Z_k)_{k\ge 0}$ 
returns to 0. Then $(\wZ_k)_{k\ge 0}$ stays at 0 whereas $(Z_k)_{k\ge 0}$ eventually regenerates due to the presence of immigration (see the first inequality in (\ref{-M})).
\begin{theo}[\cite{FY89}, \cite{FYK90}]\label{A} 
  Let $(Z_k)_{k\ge 0}$ be a $(\mu,\nu)$-branching process satisfying
  {\rm (\ref{-M})}, {\rm (A)} and {\rm (B)}. We let
\[u_n:=P[ N^{(Z)}>n]=P[\tilde{Z}_n>0],\quad n\in\N,\] 
describe the tail of the distribution of 
$ N^{(Z)}$ 
and denote by
\[v_n:=E\bigg[\sum_{m=0}^n \wZ_m\bigg]\] the expectation of the
total progeny of $(\wZ_k)_{k\ge 0}$ up to time $n\in\N_0\cup\{\infty\}$.
   Then the following statements hold.
  \begin{itemize}
  \item [(i)] If $\theta>1$ then
    $\lim\limits_{n\to\infty}u_n=\con{eins}\in(0,1)$, in particular, the process
    $(\wZ_k)_{k\ge 0}$ has a strictly positive chance $c_{\ref{eins}}$ never
    to die out.
  \item [(ii)] If $\theta=1$ then $\lim\limits_{n\to\infty}u_n\ln
    n=\con{zwei}\in(0,\infty)$, in particular, the process $(\wZ_k)_{k\ge 0}$ will
    eventually die out a.s..
  \item [(iii)] If $\theta=-1$ then
    $\lim\limits_{n\to\infty}v_n(\ln n)^{-1}=\con{drei}\in(0,\infty)$, in
    particular, $v_\infty=\infty$, i.e.\ the expected total progeny of
    $(\wZ_k)_{k\ge 0}$, $v_\infty$, is infinite.
  \item [(iv)] If $\theta< -1$ and
    \[\sum_{k\ge 1} k^{1+|\theta|}\mu(\{k\})<\infty\] then
    $\lim\limits_{n\to\infty}u_nn^{1+|\theta|}=\con{four}\in(0,\infty)$.
    Moreover, in this case
    $\lim\limits_{n\to\infty}v_n=\con{five}\in(0,\infty)$, i.e.\ the expected
    total progeny of $(\wZ_k)_{k\ge 0}$ is finite.
  \end{itemize}
\end{theo}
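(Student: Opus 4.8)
\medskip
\noindent\textbf{Sketch of proof.}
The plan is to treat $(Z_k)$ as a near-critical Markov chain and to exploit that, conditionally on $\{Z_k=z\}$ with $z\ge M$, the increment $Z_{k+1}-Z_k$ has mean exactly $\la$ (the migration drift, the branching contribution to the mean vanishing because $f'(1)=1$) and variance asymptotically $2bz$ as $z\to\infty$. This singles out the relevant Lyapunov functions as the solutions of the formal harmonic equation $b z\,h''(z)+\la\,h'(z)=0$, i.e.\ $h(z)=z^{1-\theta}/(1-\theta)$ when $\theta\ne1$ and $h(z)=\ln z$ when $\theta=1$, modified harmlessly for small $z$. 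First I would show, using (A) and the decisive moment condition (B), that $h(Z_k)$ evolves like a martingale, up to an error summable at the relevant scale, as long as $Z_k$ exceeds a fixed large level $L$; condition (B) is precisely what controls the third-order Taylor remainder of $h$ sharply enough to produce the \emph{constants} and the exponents below, rather than bounds of the correct order only. On a bounded base set $\{0,1,\dots,L\}$ the diffusion picture breaks down; there one uses instead the first inequality in (\ref{-M}), $\nu(\N)>0$, to obtain uniform ellipticity-type lower bounds on the one-step transition probabilities, so that the chain re-enters high values from a bounded set. (Downward jumps from a high level $z$ straight to $0$ have probability at most $f(0)^{\,z-M}$ and cost nothing if $L$ is chosen large.)

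Granting these ingredients, the four cases separate. For $\theta>1$ (part (i)) the function $h(z)=z^{1-\theta}/(1-\theta)$ is bounded on $\{1,2,\dots\}$, increasing, with finite limit $h_\infty$; stopping $(Z_k)$ at the exit from a window $\{L,\dots,K\}$ and letting $K\to\infty$ shows that from every state $z\ge1$ there is a positive probability of never hitting $0$, and since $P[Z_1=0]>0$ (because $f(0)>0$ and $\nu(\N)>0$) one gets $u_n\downarrow u_\infty\in(0,1)$. For $\theta=1$ (part (ii)) one has $h(z)=\ln z$, and the resulting exit estimate --- starting from $z$, reaching level $K$ before returning to the base set has probability $\asymp(\ln z)/(\ln K)$ --- together with the regeneration on the base set yields the slow decay $u_n\ln n\to c\in(0,\infty)$. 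For $\theta\le-1$ the convex function $h(z)\sim z^{1-\theta}$ (with $1-\theta\ge2$), or a slight modification of it, satisfies $E[h(Z_{k+1})\mid Z_k=z]\le h(z)$ for large $z$, with the critical power being exactly $p=1+|\theta|$; the matching moment requirement for this computation to make sense is precisely $\sum_k k^{1+|\theta|}\mu(\{k\})<\infty$ in part (iv). Iterating this drift inequality and passing through the base set produces the polynomial tail $u_n\,n^{1+|\theta|}\to c\in(0,\infty)$. Finally $v_n$ is obtained by summing occupation estimates: $v_\infty=\sum_{m\ge0}E[Z_m;\,N^{(Z)}>m]$, and since on $\{N^{(Z)}>m\}$ the value $Z_m$ is typically of order $m$, this series behaves like $\sum_m m\,P[N^{(Z)}>m]$, which diverges logarithmically when $\theta=-1$ (so $v_n(\ln n)^{-1}\to c\in(0,\infty)$ in (iii)) and converges when $\theta<-1$ (so $v_\infty=c<\infty$ in (iv)).

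The main obstacle is extracting the \emph{exact} orders --- the $\ln n$, the exponent $1+|\theta|$ --- together with strictly positive and finite limiting constants, as opposed to estimates of the right order. This calls for a genuine renewal analysis: the trajectory of $(Z_k)$ is broken into the i.i.d.\ excursions between successive visits to $0$ (which exist because immigration makes $0$ non-absorbing for $(Z_k)$, although it is absorbing for $(\wZ_k)$), the relevant functionals of a single excursion are shown to have tails governed by the Lyapunov estimates above, and sharp renewal theory then gives the stated asymptotics. Keeping the errors in the martingale and drift computations within the budget allowed by (B), and matching the large-$z$ diffusion regime with the bounded-$z$ elliptic regime without degrading these asymptotics, is the delicate point; the details are carried out in \cite{FY89} and \cite{FYK90}.
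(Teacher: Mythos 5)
The paper does not prove Theorem~\ref{A}: it is quoted from \cite{FY89} and \cite{FYK90}, and the only "proof content" attached to it in this article is the Remark immediately after the statement, which explains the index shift and the multiplicative factor $P[\wZ_1>0]=\sum_{k\ge1}\nu(\{k\})(1-\mu(\{0\})^k)$ translating the authors' definition of $u_n$ into that of \cite{FYK90}, and notes that this affects only the values of the constants, not their positivity or finiteness. You likewise end your sketch by deferring the details to the same two references, so strictly speaking neither text offers a self-contained proof to compare.

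As far as the sketch itself goes: your Lyapunov/Lamperti outline --- scale functions $z^{1-\theta}$ (respectively $\ln z$ at $\theta=1$) for the diffusion approximation with drift $\la$ and variance $\asymp 2bz$, the ellipticity role of $\nu(\N)>0$ on a bounded base, the moment condition $\sum_k k^{1+|\theta|}\mu(\{k\})<\infty$ matching the required integrability of $Z_k^{1+|\theta|}$, and the occupation-time heuristic $v_\infty\approx\sum_m m\,P[N^{(Z)}>m]$ --- is a reasonable and internally coherent modern template for results of this type. But two caveats are worth flagging. First, it is most likely a genuinely different route from the cited one: the Soviet-school papers of Formanov, Yasin and Kaverin on critical branching with migration argue through iterations of the probability generating function $f$ and its asymptotic expansion, not through discrete martingale/Lyapunov estimates; so even if you could complete your sketch it would be a reproof rather than a reconstruction. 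Second, and more importantly, the part of the theorem that actually matters in this paper is the existence of strictly positive, finite limiting \emph{constants} $c_1,\dots,c_5$, and this is precisely what your sketch does not pin down: supermartingale inequalities of the form $E[h(Z_{k+1})\mid Z_k=z]\le h(z)+o(1)$ give tail bounds of the correct order, but upgrading them to sharp asymptotics with a convergent constant (your ``sharp renewal theory then gives the stated asymptotics'') requires matching the two regimes to within $o(1)$ of the main term, controlling the error in the Taylor expansion to the point where it sums, and verifying a Tauberian/renewal hypothesis --- none of which is carried out. So as written the sketch conveys the shape of the result but, like the paper, still relies on \cite{FYK90} for the substance.
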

The above results about the limiting behavior of $u_n$ are contained
in Theorems 1 and 4 of \cite{FY89}, \cite{FYK90}. The proofs are given
only in \cite{FYK90}. The behavior of $v_n$ is the content of formula
(33) in \cite{FYK90}.  The statements (i) and (ii) of Theorem~\ref{A} also follow from
\cite[Theorem 2.2]{YY95} (see also \cite[Theorem 2.1]{YMY03}).
\begin{rem} {\rm We have to point out that we use a slightly different
    (and more convenient for our purposes) definition of the lifetime,
    $N^{(Z)}$, of the stopped process. More precisely, our quantity
    $u_n$ can be obtained from the one in \cite{FYK90} by the shift of
    the index from $n$ to $n-1$ and multiplication
    by \[P[\wZ_1>0]=\sum_{k\ge 1} \nu(\{k\})\left(1-\mu(\{0\})^k\right),\]
    which is positive due to the first inequality in (\ref{-M}) and
    the fact that $\mu(\{0\})<1$ (by the condition $f'(1)=1$ of
    assumption (A)). A similar change is needed for the expected total
    progeny of the stopped process. Clearly, these modifications
    affect only the values of constants in Theorem~\ref{A} and not
    their positivity or finiteness. }
\end{rem}

The papers mentioned above contain other results but we chose to state
only those that we need.  In fact, we only use the first part of (iv) and  the
following characterization, which we obtain from Theorem~\ref{A} by
a coupling argument.
\begin{cor}\label{AA}
  Let the assumptions of Theorem \ref{A} hold. Then $(\wZ_k)_{k\ge 0}$ dies
  out a.s.\ iff $\theta\le 1$.  Moreover,
  the expected total progeny of $(\wZ_k)_{k\ge 0}$, $v_\infty$, is finite iff
  $\theta<-1$.
\end{cor}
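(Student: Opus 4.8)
The plan is to obtain Corollary~\ref{AA} from Theorem~\ref{A} by monotone comparison (coupling) arguments that extend each of the four regimes $\theta>1$, $\theta=1$, $\theta=-1$, $\theta<-1$ to the full ranges $\theta>1$ and $\theta\le 1$ (for dying out), and $\theta<-1$ versus $\theta\ge-1$ (for finiteness of $v_\infty$). The point is that Theorem~\ref{A} only treats the boundary values and (for the last case) values below $-1$, so the corollary requires filling in the open intervals. I would first observe that everything is monotone in the migration distribution $\nu$ in the stochastic order: if $\nu_1\preceq\nu_2$ (i.e.\ $\eta^{(1)}_k\le\eta^{(2)}_k$ can be coupled pointwise), then the corresponding stopped processes $(\wZ_k^{(1)})$ and $(\wZ_k^{(2)})$ can be coupled so that $\wZ^{(1)}_k\le\wZ^{(2)}_k$ for all $k$, hence $\{(\wZ^{(1)}_k)\text{ dies out}\}\supseteq\{(\wZ^{(2)}_k)\text{ dies out}\}$ and $v^{(1)}_\infty\le v^{(2)}_\infty$. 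Since $\theta=\lambda/b$ and $b=f''(1)/2$ depends only on $\mu$, moving $\nu$ up in stochastic order (while keeping the upper bound $-M$, which only helps, and keeping $\nu(\N)>0$) increases $\lambda$ and hence increases $\theta$.

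Concretely, fix the offspring law $\mu$ (so $b$ is fixed) and fix $M$. For the first assertion: if $\theta\le 1$, choose a migration law $\nu'$ with $\nu\preceq\nu'$, still satisfying (\ref{-M}), whose mean $\lambda'$ satisfies $\lambda'/b=\theta'\in\{1\}$ if $\theta\le1$ — that is, bump $\nu$ up to a law with $\theta'=1$ exactly (possible by adding mass at large positive integers and renormalizing; assumption (A)'s requirement $\lambda<\infty$ is preserved by using a compactly supported adjustment). By Theorem~\ref{A}(ii) the process $(\wZ_k')$ dies out a.s., and by the coupling so does $(\wZ_k)$. Conversely, if $\theta>1$, dominate $\nu$ from below by a law $\nu''\preceq\nu$ with $\theta''=1$; then $(\wZ_k'')$ dies out a.s.\ by (ii), which is the contrapositive direction — so instead one wants: if $\theta>1$ the process has positive survival probability. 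For that, dominate $\nu$ from below by a law $\nu''$ with $\theta''\in(1,\theta]$ still of the form covered by Theorem~\ref{A}(i); e.g.\ truncate $\nu$ to move a little mass down so that the new mean gives $\theta''>1$, apply (i) to get $c_{\ref{eins}}>0$ survival chance for $(\wZ_k'')$, and lift it through the coupling $\wZ_k''\le\wZ_k$ to conclude $(\wZ_k)$ survives with positive probability. This proves: $(\wZ_k)$ dies out a.s.\ $\iff\theta\le1$. For the second assertion, the same scheme using monotonicity of $v_\infty$: if $\theta<-1$ and additionally the moment condition $\sum_k k^{1+|\theta|}\mu(\{k\})<\infty$ holds, Theorem~\ref{A}(iv) gives $v_\infty<\infty$ directly; for general $\theta<-1$ one dominates $\nu$ from above by $\nu'$ with $\theta'\in(\theta,-1)$ chosen close enough to $-1$ that $1+|\theta'|$ is small, so that (B) (which controls $k^2\ln k$) already implies the required moment bound $\sum_k k^{1+|\theta'|}\mu(\{k\})<\infty$ — indeed $1+|\theta'|<2$ once $|\theta'|<1$... wait, that is never the case here since $\theta'<-1$; rather one uses that for $\theta'$ just below $-1$, $1+|\theta'|$ is just above $2$, and (B) gives $\sum k^2\ln k\,\mu(\{k\})<\infty$ which dominates $\sum k^{1+|\theta'|}\mu(\{k\})$ whenever $1+|\theta'|\le 2$, i.e.\ $|\theta'|\le1$. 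So the moment hypothesis of (iv) must instead be assumed or derived from a stronger tail condition; the corollary as stated presumably relies on the ERW application where $\mu=\geo$ has all moments, making (iv) unconditional. Accordingly, I would prove the second assertion under the standing assumptions plus the (automatically satisfied, for $\mu=\geo$) condition that $\mu$ has finite moment of every order, so that (iv) applies for any $\theta<-1$; then monotonicity in $\nu$ upgrades "$\theta<-1\Rightarrow v_\infty<\infty$" to hold for all such $\nu$, and "$\theta\ge-1\Rightarrow v_\infty=\infty$" follows from (iii) (which gives $v_\infty=\infty$ at $\theta=-1$) together with the coupling lower bound for $\theta\in(-1,1]$, dominating from below by a law with $\theta''=-1$.

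The main obstacle is the bookkeeping in the coupling: one must verify that the stochastic-order coupling of the migration variables $\eta_k$ genuinely induces a pathwise ordering of the \emph{stopped} processes $(\wZ_k)$, not merely of the unstopped $(Z_k)$. This is true because the hitting time of $0$ is a monotone functional — if $\wZ^{(1)}_k\le\wZ^{(2)}_k$ then $N^{(Z^{(1)})}\le N^{(Z^{(2)})}$ — but one has to set up the coupling carefully so that, at each step, the \emph{same} offspring variables $\xi$ are used for the common individuals and extra independent offspring variables are supplied for the surplus individuals in the larger process, and that when the smaller process loses individuals to emigration the coupling still respects the order (here the bound $\eta_k\ge-M$ and the convention $\xi_1+\dots+\xi_i=0$ for $i\le0$ are what keep things consistent). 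Once this pathwise domination is in place, the four cases of Theorem~\ref{A} plug in directly and the corollary follows; I expect this coupling lemma to be the only substantive content, the rest being a short case analysis.
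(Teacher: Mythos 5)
Your proposal uses the same monotone-coupling strategy as the paper: dominate $\nu$ from above by a law $\nu'$ with mean exactly $b$ (so $\theta'=1$) when $\theta\le 1$, reducing extinction to Theorem~\ref{A}(ii), and dominate from below by a law with $\theta''=-1$ to get $v_\infty=\infty$ for $\theta\in(-1,1]$ from (iii). Two remarks. First, you overcomplicate the ``only if'' direction of the first claim: for $\theta>1$, Theorem~\ref{A}(i) applies to $(\mu,\nu)$ directly, so no truncation or downward coupling is needed. Second, your worry about the extra moment hypothesis in Theorem~\ref{A}(iv) is legitimate; the paper cites (iv) without comment, and as you observe the hypothesis holds automatically in the paper's application because $\mu=\geo$ has finite moments of all orders, but the corollary as literally stated (``the assumptions of Theorem~\ref{A}'') does silently carry that extra condition for the ``$\theta<-1\Rightarrow v_\infty<\infty$'' direction.
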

\begin{proof}
  Theorem \ref{A} (i) gives the `only if'-part of the first statement.
  To show the 'if' direction we 
assume that $\theta\le 1$, i.e.\  $\nu$ has mean $\la\le b$ (see
  (\ref{theta})).  Then there is another $\nu'$ with mean $b$ which
  stochastically dominates $\nu$. Indeed, if $X$ has distribution
  $\nu$ and $Y$ has expectation $b-\la$ and takes values in $\N_0$
  then $\nu'$ can be chosen as the distribution of $X+Y$.  By
  coupling, the $(\mu,\nu')$-branching process stochastically
  dominates the $(\mu,\nu)$-branching process. However, the
  $(\mu,\nu')$-branching process dies out a.s.\ due to Theorem \ref{A}
  (ii) since for this process $\theta=1$.  Consequently, the
  $(\mu,\nu)$-branching process must die out, too.

  Similarly, Theorem \ref{A} (iv) gives the `if'-part of the second statement.
  The converse direction follows from monotonicity as
  above and Theorem \ref{A} (iii).
\end{proof}
\section{From ERWs to branching processes with migration}\label{sunny}

The goal of this mainly expository section is to show how our ERW
model can be naturally recast as a branching process with
migration. This connection was already observed and used in
\cite{BS06} and \cite{BS07}.

Consider a nearest neighbor random walk path $(X_n)_{n\ge 0}$, which
starts at $0$ and define
\[T_k:=\inf\{n\ge 1\mid X_n=k\}\in \N\cup\{\infty\},\quad k\in\Z.\]
Assume for the moment that $X_1=1$ and consider the right excursion,
i.e.  $(X_n)_{0\le n<T_0}$. The left excursion can be treated by
symmetry.

On the set $\{T_0<\infty\}$ we can define a bijective path-wise mapping of this
right excursion to a finite rooted tree, which corresponds to a
realization of a branching process with the extinction time
$\max\{X_n,\ 0\le n<T_0\}$ as illustrated in Figure~\ref{tree_leaves}.
\begin{figure}[t]
\begin{center}
\epsfig{file=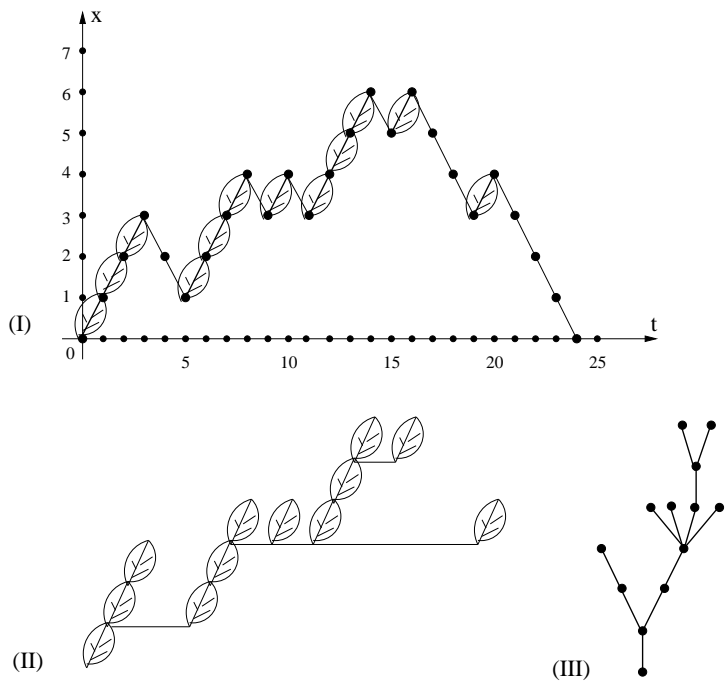,height=9cm, angle=0}
\caption{\footnotesize (I) Right excursion of the random walk.
  Upcrossings are marked by ``tree leaves''. (II) The number of
  upcrossings of the edge $(k,k+1)$ becomes the number of particles
  in generation $k$ for the branching process. Shrinking the horizontal
lines in (II) into single points gives the tree (III). Traversing the
  tree (III) in preorder rebuilds the
  excursion (I).}
\label{tree_leaves}
\end{center}
\end{figure}
Moreover, given a tree for a branching process that becomes extinct in
finite time, we can reconstruct the right excursion of the random
walk. This can be done by making a time diagram of up and down
movements of an ant traversing the tree in preorder: the ant starts at
the root, always chooses to go up and to the left whenever possible,
never returns to an edge that was already crossed in both directions,
and finishes the journey at the root (Figure~\ref{tree_leaves},
(III)).

The above path-wise correspondence on $\{T_0<\infty\}$ does not depend
on the measure associated to the random walk paths. 
To consider the set $\{T_0=\infty\}$ we shall need some of the
properties of this measure. The following simple statement leaves only
three major possibilities for a long term behavior of an ERW path.
\begin{lemma}\label{neww}
Let $\om\in\Om_M$. Then $P_{0,\om}$-a.s.\ 
\[\liminf_{n\to\infty} X_n,\ \limsup_{n\to\infty}X_n\in\{-\infty,+\infty\}.\]
\end{lemma}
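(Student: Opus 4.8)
The plan is to reduce the statement to the following trivial feature of the model, valid for \emph{every} $\om\in\Om_M$ and requiring neither (\ref{H0}) nor (\ref{elli}): by definition of $\Om_M$ each cookie pile is used up after $M$ visits, so from the $(M{+}1)$-st visit to a site $z$ on, the walk steps from $z$ to $z+1$ and to $z-1$ with probability exactly $1/2$. Fix $\om\in\Om_M$. Since $\liminf_n X_n$ and $\limsup_n X_n$ take values in $\Z\cup\{-\infty,+\infty\}$, it suffices to show that for every fixed $L\in\Z$ one has $P_{0,\om}[\liminf_n X_n=L]=0$ and $P_{0,\om}[\limsup_n X_n=L]=0$; the claim then follows by taking the union over the countably many $L\in\Z$.

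I will only treat $\liminf$, the case of $\limsup$ being symmetric (exchange the roles of $L-1$ and $L+1$ below). The first step is the elementary observation that, because $(X_n)$ is nearest-neighbour and $\Z$-valued, on the event $\{\liminf_n X_n=L\}$ the following two things hold simultaneously: (a) $X_n=L$ for infinitely many $n$ (some subsequence converges to $L$, and an integer sequence converging to $L$ is eventually equal to $L$), and (b) $X_n\ge L$ for all large $n$ (as $L$ is the least subsequential limit). It remains to see that (a) and (b) are incompatible off a $P_{0,\om}$-null set. For $n\ge1$ put $\F_n:=\si(X_0,\dots,X_n)$ and
\[A_n:=\bigl\{X_{n-1}=L,\ \#\{i\le n-1\mid X_i=L\}>M,\ X_n=L-1\bigr\}\in\F_n,\]
so that, by the transition rule recalled above, $P_{0,\om}[A_n\mid\F_{n-1}]=\tfrac12\,\won_{\{X_{n-1}=L,\ \#\{i\le n-1\mid X_i=L\}>M\}}$. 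On the event in (a) the indicator on the right equals $1$ for infinitely many $n$, hence $\sum_{n\ge1}P_{0,\om}[A_n\mid\F_{n-1}]=\infty$ there; by the divergence half of the conditional Borel--Cantelli lemma, $A_n$ occurs for infinitely many $n$, $P_{0,\om}$-a.s.\ on that event. In particular $X_n=L-1$ for infinitely many $n$, contradicting (b). Therefore $P_{0,\om}[\liminf_n X_n=L]=0$, and the lemma follows.

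I do not expect a genuine obstacle: the proof is essentially a soft $0$--$1$ argument. The only points deserving care are (i) the bookkeeping in the conditional Borel--Cantelli step --- one may instead phrase it via the strong Markov property applied at the successive visits to $L$, noting that every visit occurring after the pile at $L$ is exhausted independently produces a step to $L-1$ with probability $1/2$, so the probability of never stepping to $L-1$ after $k$ such visits is at most $2^{-k}$ --- and (ii) recording that the whole argument is uniform in $\om\in\Om_M$, which is why the conclusion holds for all, not merely $\PP$-a.a., environments.
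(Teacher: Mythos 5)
Your proof is correct and rests on the same idea as the paper's: once the cookie pile at a site is exhausted, the walk steps left or right with probability $1/2$, so (by conditional Borel--Cantelli, or equivalently the strong Markov property at successive visits) any site visited infinitely often forces a neighbouring site to be visited infinitely often, which rules out a finite $\liminf$ or $\limsup$. The paper phrases it as ``$z$ visited i.o.\ $\Rightarrow z+1$ visited i.o.'' and then appeals to symmetry, whereas you argue downward from $L$ to $L-1$ for the $\liminf$; this is a cosmetic difference, not a different route.
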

\begin{proof}
Let $z\in\Z$. If the ERW visits $z$ infinitely many times then it also
visits $z+1$ infinitely many times due to the second Borel Cantelli
lemma, the strong Markov property, and the assumption $\om(z,i)=1/2$
for $i>M$.  This implies $P_{0,\om}$-a.s.\ $\limsup_nX_n\notin
\Z$. Similarly, $P_{0,\om}$-a.s.\ $\liminf_nX_n\notin \Z$.
\end{proof}

Let us now put a measure on the paths and see what kind of measure will be induced on trees. Consider the right excursion of the simple symmetric random walk. Assume without loss of generality that the walk starts at 1. Then the probability that $T_0<\infty$ is equal to one and the corresponding measure on trees will be the one for a standard Galton-Watson
process with the \geo offspring distribution starting from a single
particle. More precisely, set $U_0:=1$ and let
\begin{equation}
  \label{U}
  U_k:=\#\{n\ge 0\mid n<T_0,\ X_n=k,\ X_{n+1}=k+1\},\qquad k\ge 1,
\end{equation}
be the number of upcrossings of the edge $(k,k+1)$ by the walk before
it hits 0. Then $(U_k)_{k\ge 0}$ has the same distribution as the Galton-Watson process with \geo offspring distribution. Therefore, $(U_k)_{k\ge 0}$ can also be generated as follows: 
start with one particle: $U_0=1$. To generate the $(k+1)$-st
generation from the $k$-th generation (assuming that the process has not yet died out), 
the first particle of generation $k$ tosses a fair coin repeatedly 
and produces one offspring if the coin comes up "heads". It stops the reproduction once the coin comes up "tails". Then the second particle in generation $k$ follows the same procedure independently, then the third one, and so on. Consequently,
\[U_{k+1}=\xi^{(k+1)}_1+\ldots+\xi^{(k+1)}_{U_k},\]
where $\xi_i^{(j)}$, $i,j\ge 1$ are independent with distribution
\geo.

To construct a branching process corresponding to an ERW with $M$ cookies per site one can use exactly the 
same procedure except that for the first $M$ coin tosses in the $k$-th generation the particles should use coins with biases "prescribed" by the cookies located at site $k$. Since every particle tosses a coin at least
once, at most the first $M$ particles in each generation will have a chance to use biased coins. All the remaining particles will toss fair coins only.  This can be viewed as a branching process with migration in the following natural way. Before the reproduction starts, the first $U_k\wedge M$ particles emigrate, taking with them all $M$ biased coins and an infinite supply of fair coins. In exile
they reproduce according to the procedure described above. Denote the total number of offspring produced by these particles by $\eta^{(k+1)}_{U_k\wedge M}$.  Meanwhile, the
remaining particles (if any) reproduce using only fair coins. Finally, the offspring of the emigrants re-immigrate.  Therefore, the number of particles in the generation $k+1$ can be written as
\begin{equation}\label{ejp}
U_{k+1}:=\xi^{(k+1)}_1+\ldots+\xi^{(k+1)}_{U_k-M}+\eta^{(k+1)}_{U_k\wedge
M},
\end{equation}
where $\xi_i^{(j)}$ and $\eta^{(k)}_{\ell}$ $(i,j,k\ge 1,\ 
0\le \ell\le M)$ are independent random variables, each one of the sequences
$(\eta^{(k)}_{0})_{k\ge 1},\ldots, (\eta^{(k)}_{M})_{k\ge 1}$ is identically distributed, and each $\xi_i^{(j)}$ has distribution \geo. 

 Branching processes of type (\ref{ejp}) were considered in \cite{BS06} (p.\ 630) and \cite{BS07} (p.\
815), except that they were generated not by the forward but by the backward excursion (see (\ref{Uk}) in Section~\ref{rs}). Careful analysis of such processes carried out in these two papers
yielded results concerning positive speed and rates of growth at infinity for ERWs with non-negative cookies. However, from a practical point of view, $(\mu,\nu)$-branching processes, introduced in Definition \ref{defi}, seem to be well-known and studied more extensively in the past. In particular, we could find the results we need (see Theorem \ref{A}) in the literature only for $(\mu,\nu)$-branching processes, but not for processes of the form (\ref{ejp}).  For this reason one of our main tasks will be to relate these two classes of processes in order to translate the results from the literature into results about processes of the form (\ref{ejp}).

\section{Coin-toss construction of the ERW and the related 
$(\mu,\nu)$-branching process}\label{brpr}

In this section we formalize a coin-toss construction of the ERW 
and introduce auxiliary processes used
in the rest of the paper.

Let $(\Om,\F)$ be some measurable space equipped with a family of
probability measures $P_{x,\om},\ x\in\Z,\ \om\in\Om_M$, such that for
each choice of $x\in\Z$ and $\om\in\Om_M$ we have $\pm 1$-valued
random variables $Y_{i}^{(k)},\ k\in\Z,\ i\ge 1,$ which are
independent under $P_{x,\om}$ with distribution given by
\[P_{x,\om}[Y_{i}^{(k)}=1]=\om(k,i)\quad\mbox{ and }\quad
P_{x,\om}[Y_{i}^{(k)}=-1]=1-\om(k,i).\] Moreover, we require that
there is a random variable $X_0$ on $(\Om,\F,P_{x,\om})$ such that
$P_{x,\om}[X_0=x]=1$.  Then an ERW $(X_n)_{n\ge 0}$, starting at
$x\in\Z$, in the environment $\omega$ can be realized on the
probability space $(\Om,\F,P_{x,\om})$ recursively by:
\begin{eqnarray}
X_{n+1}&:=&X_n+Y_{\#\{i\le n\mid X_i=X_n\}}^{(X_n)},\quad n\ge 0.\label{x1}
\end{eqnarray}
We shall refer to $\{Y_{i}^{(k)}=1\}$ as a ``success''
and to $\{Y_{i}^{(k)}=-1\}$ as a ``failure''. 
Due to (\ref{x1}) every step to the right or to the left of the random
walk corresponds to a success or a failure, respectively. 

We now  describe various branching processes that appear in the proofs.  
Namely, we introduce processes $(V_k)_{k \ge 0}$, $(W_k)_{k
  \ge 0}$, and $(Z_k)_{k \ge 0}$. Modifications of the first two
processes suitable for left excursions will be defined later when they
are needed (we shall keep the same notation though, hoping that this
will not lead to confusion). The last process, $(Z_k)_{k \ge 0}$,
will belong to the class of processes from Section~\ref{lit}.

For $m\in\N$ and $k\in \Z$ let
\begin{equation}\label{sucks}
S_0^{(k)}:=0,\quad S_m^{(k)}
  := \#\text{ of successes in
$\big(Y_{i}^{(k)}\big)_{i\ge 1}$ prior to the $m$-th failure.}
\end{equation}
Recall from the introduction that $P_x[\,\cdot\,]$ denotes the averaged
measure $\EE[P_{x,\om}[\,\cdot\,]]$. By assumption (\ref{elli}) the walk
reaches $1$ in one step with positive $P_0$-probability.  

We shall be interested in the behavior of the process $(U_k)_{k\ge 0}$ defined in (\ref{U}). At first, we shall relate $(U_k)_{k\ge 0}$ to 
$(V_k)_{k\ge 0}$ which is  recursively defined by
\begin{equation}\label{def}
V_0:=1,\quad 
V_{k+1}:=S_{V_k}^{(k+1)},\quad k\ge 0.
\end{equation}
Observe that $(V_k)_{k\ge 0}$ is a time homogeneous Markov chain, as
the sequence of sequences $(S_m^{(k)})_{m\ge 0}$, $k\ge 0$, is
i.i.d.. Moreover, $0$ is an absorbing state for $(V_k)_{k\ge 0}$. We
claim that under $P_1$,
\begin{align}
  U_k&=V_k\quad \text{for all $k\ge 0$ on the event
    $\{T_0<\infty\}$};\label{rec}\\
  U_k&\le V_k\quad \text{for all $k\ge 0$ on the event
    $\{T_0=\infty\}$}.\label{nrec}
\end{align}
The relation (\ref{rec}) is obvious from the discussion in Section \ref{sunny} and Figure~\ref{tree_leaves}. To show
(\ref{nrec}) we shall use induction. Recall that $U_0=V_0=1$ and
assume $U_i\le V_i$ for all $i\le k$. From Lemma \ref{neww} we know 
that $X_n\to\infty$ as $n\to\infty$ on $\{T_0=\infty\}$ a.s.\ with
respect to $P_1$.  Therefore, the last, $U_k$-th, upcrossing of the
edge $(k,k+1)$ by the walk is not matched by a downcrossing. This
implies that $U_{k+1}$ should be less than or equal to the number of
successes in the sequence $\big(Y^{(k+1)}_i\big)_{i\ge 1}$ prior to
the $U_k$-th failure.  On the other hand, to get the value of
$V_{k+1}$ one needs to count all successes in this sequence until the
$V_k$-th failure.  Since $U_k\le V_k$, we conclude that $U_{k+1}\le
V_{k+1}$.

Next we introduce the process $(W_k)_{k\ge 0}$ by setting
\begin{equation}
  \label{W}
  W_0:=0,\quad W_{k+1}:=S^{(k)}_{W_k\vee M},\quad k\ge 0. 
\end{equation}
Just as $(V_k)_{k\ge 0}$, the process $(W_k)_{k\ge 0}$ is a time
homogeneous Markov chain on non-negative integers. Moreover, the
transition probabilities from $i$ to $j$ of these two processes
coincide except for $i\in\{0,1,\dots, M-1\}$ and both processes can reach any positive number with positive probability. Therefore, if one of
these two processes goes to infinity with positive probability, so
does the other:
\begin{equation}
  \label{vw}
  P_{1}[V_k\to\infty]>0\quad \Longleftrightarrow \quad P_{1}[W_k\to\infty]>0.
\end{equation}
Finally, we decompose the process $(W_k)_{k\ge 0}$ into two components
as follows.  
\begin{lemma}
  \label{deco1}
  For $k\ge 0$ let $Z_k:=W_{k+1}-S^{(k)}_M$.  Then $(Z_k)_{k\ge 0}$ is
  a $(\geo,\nu)$-branching process, where $\nu$ is the common
  distribution of $\eta_k:=S_M^{(k)}-M$ under $P_1$.
\end{lemma}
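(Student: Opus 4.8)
The plan is to read off the offspring and the migration directly from the recursions $(\ref{sucks})$, $(\ref{def})$ and $(\ref{W})$, so that $(Z_k)_{k\ge0}$ becomes, verbatim, an instance of the construction in Definition~\ref{defi}. Concretely I would (1) check $Z_0=0$; (2) exhibit, for each $k\ge0$, a sequence $(\zeta_i^{(k+1)})_{i\ge1}$ that is a function of $(Y_i^{(k+1)})_{i\ge1}$ alone and verify
\[
Z_{k+1}=\zeta_1^{(k+1)}+\dots+\zeta_{Z_k+\eta_k}^{(k+1)}
\]
with the summation convention of $(\ref{Z})$; and (3) check that the whole family $\{\zeta_i^{(j)}:i,j\ge1\}\cup\{\eta_k:k\ge0\}$ is mutually independent with $\zeta_i^{(j)}\sim\geo$ and $\eta_k\sim\nu$. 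Granting (1)--(3), $(Z_k)_{k\ge0}$ is a $(\geo,\nu)$-branching process by Definition~\ref{defi}.

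For (1) and (2) I would just unfold the definitions. Since $\eta_k=S^{(k)}_M-M$, the definition of $Z_k$ gives $W_{k+1}=Z_k+S^{(k)}_M=Z_k+M+\eta_k$; hence $Z_0=W_1-S^{(0)}_M=S^{(0)}_M-S^{(0)}_M=0$, and, using $(a+M)\vee M=M+a_+$,
\[
Z_{k+1}=W_{k+2}-S^{(k+1)}_M=S^{(k+1)}_{W_{k+1}\vee M}-S^{(k+1)}_M=S^{(k+1)}_{M+(Z_k+\eta_k)_+}-S^{(k+1)}_M.
\]
Thus $Z_{k+1}$ counts the successes of $(Y_i^{(k+1)})_{i\ge1}$ that fall strictly between the $M$-th and the $(M+(Z_k+\eta_k)_+)$-th failure, so with $\zeta_\ell^{(k+1)}:=S^{(k+1)}_{M+\ell}-S^{(k+1)}_{M+\ell-1}$ the identity $Z_{k+1}=\sum_{\ell=1}^{(Z_k+\eta_k)_+}\zeta_\ell^{(k+1)}$ is a tautology. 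The substance is that the $\zeta_\ell^{(k+1)}$ are i.i.d.\ $\geo$: here I would observe that the $M$-th failure always occurs at a position $\ge M$ (every toss is a success or a failure), so all coins after it have index $>M$ and are fair, and then apply the strong Markov property at the stopping time ``position of the $M$-th failure''. This both delivers the i.i.d.\ $\geo$ law of $(\zeta_\ell^{(k+1)})_{\ell\ge1}$ and shows it is independent of the coins of generation $k+1$ up to that failure and of all earlier generations; in particular it is independent of the summation bound $(Z_k+\eta_k)_+$, which is determined by the coins of generations $\le k$.

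For (3) I would note that each quantity carrying the superscript $(j)$ is a function of the single sequence $(Y_i^{(j)})_{i\ge1}$, and that under the averaged measure $P_1$ these sequences are i.i.d.\ across $j$ by $(\ref{H0})$; this makes quantities attached to different generations independent, forces the $\eta_k$ to share the common law $\nu$ of $\eta_0=S^{(0)}_M-M$, and makes $\eta_k$ independent of $\zeta_i^{(k+1)}$. Within one generation, $\eta_k$ depends only on the coins up to and including the $M$-th failure while every $\zeta_\ell^{(k)}$ depends only on the coins after it, so the same stopping-time splitting gives $\eta_k\perp(\zeta_\ell^{(k)})_{\ell\ge1}$; assembling these statements over generations yields mutual independence of the entire family. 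Finally, since $Z_0,\dots,Z_k$ are functions of $\{\zeta^{(1)},\dots,\zeta^{(k)},\eta_0,\dots,\eta_{k-1}\}$ (by step (2) and induction), $\eta_k$ is independent of $(Z_0,\dots,Z_k)$, so the conditional law of $Z_{k+1}$ given the past depends on $Z_k$ only and equals the $(\geo,\nu)$-transition kernel. The only genuinely delicate point is this strong-Markov decomposition of the coin sequence at the random time of the $M$-th failure; once one records that this time is at least $M$, so that every later coin is fair, the rest is bookkeeping.
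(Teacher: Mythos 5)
Your proof is correct and follows essentially the same route as the paper's: you decompose $Z_{k+1}$ into inter-failure success counts $\zeta_\ell^{(k+1)}=S^{(k+1)}_{M+\ell}-S^{(k+1)}_{M+\ell-1}$ (the paper's $\xi^{(k+1)}_i$), observe that the $M$-th failure occurs at index $\ge M$ so every later coin is fair, and conclude i.i.d.\ $\geo$ offspring independent of $\eta_k$. Your explicit appeal to the strong Markov property at the time of the $M$-th failure makes the independence step slightly more careful than the paper's terse wording, but it is the same underlying argument, not a different approach.
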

\begin{proof}
By definition, $Z_0=0$ and
\begin{equation*}
  Z_{k+1}=W_{k+2}-S_M^{(k+1)}\ 
\stackrel{(\ref{W})}{=}\ S_{W_{k+1}\vee M}^{(k+1)}-S_M^{(k+1)}
=\xi^{(k+1)}_1+\dots+\xi^{(k+1)}_{W_{k+1}-M},
\end{equation*}
where $\xi^{(k+1)}_i$ is defined as the number of successes in
$\big(Y_j^{(k+1)}\big)_{j\ge 1}$ between the $(M+i-1)$-th and the
$(M+i)$-th failure, $i\ge 1$. Therefore, by definition of $Z_k$ and
$\eta_k$,
 \[ Z_{k+1}=
 \xi^{(k+1)}_1+\dots+\xi^{(k+1)}_{Z_k+S^{(k)}_M-M}
   =\xi^{(k+1)}_1+\dots+\xi^{(k+1)}_{Z_k+\eta_k}.
\]
Since $\om(k,m)=1/2$ for $m>M$, the random variables $Y_m^{(k)}$, $m>M$,
$k\ge 0$, are independent and uniformly distributed on $\{-1,1\}$
under $P_1$. From this we conclude that the $\xi^{(k)}_i$, $k,i\ge 1,$
have distribution \geo.  To show the independence of $\xi_i^{(j)}$ and
$\eta_k$\ $(i,j\ge 1,\ k\ge 0)$, as required by Definition \ref{defi},
notice that $\eta_k=S_M^{(k)}-M$ depends only on $Y^{(k)}_m$, where
$m$ changes from $1$ to the number of the trial resulting in the
$M$-th failure inclusively, while each $\xi^{(k)}_i$, $i\ge 1$, counts
the number of successes in $\big(Y_m^{(k)}\big)_{m\ge 1}$ between the
$(M+i-1)$-th and the $(M+i)$-th failure. Recalling again that
$Y_m^{(k)}$, $m\ge 1$, $k\ge 0$, are independent under $P_1$, we get the
desired independence.
\end{proof}
Having introduced all necessary processes we can now turn to the
proofs of  our results.

\section{Recurrence and transience} \label{rectra}
\begin{defi}\label{rrtr}{\rm The ERW is called \textit{recurrent from
      the right} if the first excursion to the right of 0, if there is
    any, is $P_0$-a.s.\ finite. 
    Recurrence from the left is defined
    analogously.}
\end{defi}
In the next lemma we shall characterize ERW which are recurrent from
the right in terms of branching processes with migration.  At first,
we shall introduce a relaxation of condition (\ref{H0}), which is
needed for the proof of Theorem \ref{rt0}:
\begin{equation}\label{KK}
 \mbox{The sequence $(\om(k,\cdot))_{k\ge K}$ is i.i.d.\ under $\PP$
  for some $K\in\N$.}
\end{equation}
Under this assumption the sequence indexed by $k\ge K$ of sequences
$(Y_{i}^{(k)})_{i\ge 1}$ is i.i.d.\ with respect to $P_0$. In
particular, the sequence $(S_M^{(k)})_{k\ge K}$ is i.i.d.\ under
$P_0$. 
\begin{lemma}\label{tree2} 
  Replace assumption (\ref{H0}) by (\ref{KK}) and assumption
  (\ref{elli}) by 
 \begin{equation}
\label{elK} 
  \EE\left[\prod_{i=1}^M (1-\om(K,i))\right]>0.
\end{equation}
Denote the common distribution of $\eta_k\ :=\ S^{(k)}_{M}-M$,\ $k\ge
K$, under $P_0$ by $\nu$.  Then the ERW is recurrent from the right if
and only if the $(\geo,\nu)$-branching process dies out a.s., i.e.\
reaches state $0$ at some time $k\ge 1$.
\end{lemma}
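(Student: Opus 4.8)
The plan is to carry ``recurrence from the right'' through the chain of auxiliary processes $(U_k)$, $(V_k)$, $(W_k)$ and finally the $(\geo,\nu)$-branching process $(Z_k)$ of Lemma~\ref{deco1}, converting it at each step into an equivalent survival statement, and to note that the last of these --- that $(Z_k)$ visits $0$ at some time $k\ge1$, a.s.\ --- is literally the right-hand side of the lemma.

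\textbf{From the walk to $(V_k)$.} The first excursion to the right of $0$, if it occurs, begins when the walk first steps from $0$ to $1$; since that event depends only on the cookies at sites $\le0$, the strong Markov property together with the freshness of the cookies at sites $\ge1$ at that moment shows that this excursion has the law of the walk started at $1$ run until $T_0$. Hence the ERW is recurrent from the right iff $P_1[T_0=\infty]=0$ (the degenerate case in which the walk a.s.\ never reaches $1$, so that no right excursion arises, I would dispose of separately). Now use the path-wise correspondence of Figure~\ref{tree_leaves}: on $\{T_0<\infty\}$ the excursion is a finite branching tree with generation sizes $(U_k)$ and $U_k=V_k$ by~(\ref{rec}); on $\{T_0=\infty\}$ Lemma~\ref{neww} forces $X_n\to+\infty$, so $U_k\ge1$ for every $k$, while $U_k\le V_k$ by~(\ref{nrec}). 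Since $U_k=V_k=0$ for large $k$ on $\{T_0<\infty\}$, these two facts give $\{T_0=\infty\}=\{V_k\ge1\text{ for all }k\}$; that is, the right excursion is infinite precisely when the Markov chain $(V_k)$, started from $V_0=1$, never reaches its absorbing state $0$. So the ERW is recurrent from the right iff $(V_k)$ reaches $0$ $P_1$-a.s.

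\textbf{From $(V_k)$ to $(Z_k)$.} The recurring tool is the standard fact that a Markov chain whose state $0$ is accessible from every other state, conditioned never to visit $0$, eventually leaves every finite set. For $(V_k)$ and $(W_k)$ this accessibility (with one-step probability bounded below, for $k\ge K$) is precisely the ellipticity~(\ref{elK}); since the tail events in question depend only on the behaviour for $k\ge K$, ``never visiting $0$'' and ``$\to\infty$'' coincide up to null sets, so the ERW is recurrent from the right iff $P_1[V_k\to\infty]=0$, which by~(\ref{vw}) holds iff $P_1[W_k\to\infty]=0$. It remains to relate this to $Z_k=W_{k+1}-S_M^{(k)}$. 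For $(Z_k)$ accessibility of $0$ is automatic ($\nu$ has an atom and $\mu(\{0\})=1/2$), so on $\{N^{(Z)}=\infty\}$ one has $Z_k\to\infty$, whence $W_{k+1}=Z_k+S_M^{(k)}\ge Z_k\to\infty$; thus $P_1[N^{(Z)}=\infty]>0$ forces $P_1[W_k\to\infty]>0$. Conversely, if $(Z_k)$ visits $0$ a.s., then by the time-homogeneous Markov property it does so at an a.s.\ infinite increasing sequence of finite times $R_1<R_2<\dots$, at which $W_{R_j+1}=S_M^{(R_j)}$; by the strong Markov property (and a routine check that $S_M^{(k)}$ and $\{R_j=k\}$ depend on disjoint blocks of the underlying trials) these are eventually i.i.d.\ copies of $S_M^{(K)}$, and $P_1[S_M^{(K)}=0]=\EE\big[\prod_{i=1}^M(1-\om(K,i))\big]>0$ by~(\ref{elK}); hence $W_{R_j+1}=0$ for infinitely many $j$, so $P_1[W_k\to\infty]=0$. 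Combining, $P_1[W_k\to\infty]=0$ iff $(Z_k)$ visits $0$ a.s., completing the chain of equivalences.

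\textbf{Main obstacle.} I expect the delicate step to be precisely the last, between $(W_k)$ and $(Z_k)$: the two processes differ by the additive, unbounded term $S_M^{(k)}$, so there is no two-sided path-wise domination, and the equivalence must be extracted from the recurrence of $(Z_k)$ to $0$, which forces $(W_k)$ onto $0$ infinitely often via the ellipticity~(\ref{elK}). The remaining ingredients are either already supplied (the tree correspondence, (\ref{rec})--(\ref{nrec}), (\ref{vw}), Lemma~\ref{deco1}) or routine: the ``$0$ accessible, conditioned never to visit $0$, $\Rightarrow$ escape to $\infty$'' dichotomy used above, and the check that the migration law $\nu$ defined here falls under the standing assumptions (in particular~(\ref{-M})).
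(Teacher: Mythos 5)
Your proof is correct and follows essentially the same route as the paper's: the chain of reductions $T_0 \leftrightarrow (V_k) \leftrightarrow (W_k) \leftrightarrow (Z_k)$ via (\ref{rec})--(\ref{nrec}), Lemma~\ref{neww}, the ellipticity argument for absorption at $0$, (\ref{vw}), and Lemma~\ref{deco1}. The only material difference is that you spell out the last step more explicitly (the paper simply asserts $\{W_k\to\infty\}\stackrel{P_1}{=}\{Z_k\to\infty\}$ using irreducibility of $(Z_k)$, whereas you give the two-sided argument directly — one-sided domination plus the Borel--Cantelli argument with i.i.d.\ copies of $S_M^{(\cdot)}$ at the return times of $Z$ to $0$), which is a fair elaboration of what the paper leaves implicit.
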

\begin{proof}
Since we are interested
  in the first excursion to the right we may assume without
  loss of generality that the random walk starts at $1$.
Then, recalling  definition (\ref{U}), we have
$\{T_0=\infty\}\stackrel{P_1}{=}\{\forall k\ge 1\  U_k>0\},$
where $A\stackrel{P_{1}}{=}B$ means that the two events $A$ and $B$
may differ by a $P_{1}$-null-set only.  Indeed, since $U_k$ counts
only upcrossings of the edge $(k,k+1)$ prior to $T_0$, the inclusion
$\supseteq$ is trivial. The reverse relation follows from
Lemma \ref{neww}. 
This together with (\ref{rec}) and (\ref{nrec}) implies that
\begin{equation}\label{lars}
\{T_0=\infty\}\stackrel{P_1}{=}\{\forall k\ge 0\ V_k>0\}. 
\end{equation}
As above $(V_k)_{k\ge K}$ is a time homogeneous Markov chain since
the sequence of sequences $(S_m^{(k)})_{m\ge 0}$,\ $k\ge K$, is
i.i.d..  For any $m$ the transition probability of this Markov chain
from $m\in\N$ to 0 is equal to
\[P_1[S_m^{(K)}=0]= \EE\left[\prod_{i=1}^m (1-\om(K,i))\right],\] which
is strictly positive by (\ref{elK}).  Since 0 is absorbing for
$(V_k)_{k\ge 0}$  we get that $\{\forall k\ge 0\
V_k>0\}\stackrel{P_1}{=}\{V_k\to\infty\}$.  Consequently, by
(\ref{lars}),
$\{T_0=\infty\}\stackrel{P_{1}}{=}\left\{V_k\to \infty\right\}$.
Next we turn to the process $(W_k)_{k\ge 0}$ and recall relation (\ref{vw}).
Thus, 
\begin{equation}
  \label{tw}
  P_{1}[T_0=\infty]=0\quad \Longleftrightarrow \quad P_{1}[W_k\to\infty]=0.
\end{equation}
Finally, we decompose the process $(W_k)_{k\ge 0}$ as in
Lemma~\ref{deco1} by writing $W_{k+1}=Z_k+S^{(k)}_M$ for $k\ge 0$,
where $(Z_k)_{k\ge K}$ is a Markov chain with the transition kernel of
a $(\geo,\nu)$-branching process.
Since the sequence $(S^{(k)}_M)_{k\ge K}$ is i.i.d., this implies that
$\{W_k\to\infty\}\stackrel{P_{1}}{=}\{Z_k\to\infty\}$.  Together with
(\ref{tw}) this shows that the ERW is recurrent from the right iff
$P_0[Z_k\to\infty]=0$.  Since $(Z_k)_{k\ge K}$ is an irreducible
Markov chain this is equivalent to $(Z_k)_{k\ge K}$ being recurrent,
which is equivalent to recurrence of the state 0 for
$(\geo,\nu)$-branching processes.
\end{proof}
\begin{lemma}\label{rr}
  Assume again (\ref{H0}) and (\ref{elli}). If the ERW is recurrent
  from the right then all excursions to the right of 0 are $P_0$-a.s.\
  finite.  If the ERW is not recurrent from the right then it will
  make $P_0$-a.s.\ only a finite number of excursions to the right.
  The corresponding statements hold for recurrence from the left.
\end{lemma}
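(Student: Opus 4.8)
The plan is to exploit repeatedly the following elementary observation. Suppose the first $j$ right excursions of the walk are finite and let $\{0,1,\dots,H\}$ be the union of their ranges. The left excursions performed in between visit only sites $\le 0$, so at the moment the $(j+1)$-st right excursion begins the cookies on $\{H+1,H+2,\dots\}$ have not yet been touched and, conditionally on the past, are still i.i.d.\ with the original marginal. Hence, conditionally on this event, the $(j+1)$-st right excursion is the first right excursion of an ERW that satisfies the relaxed hypotheses (\ref{KK}) and (\ref{elK}) with $K=H+1$, and --- because $\nu$ is read off the fresh cookies at level $K$ --- whose associated $(\geo,\nu)$-branching process has exactly the migration law of the one attached to the original walk. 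Thus Lemma~\ref{tree2} is available for every excursion.

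For the first statement I would induct on the excursion index. The first right excursion is $P_0$-a.s.\ finite by hypothesis. If the first $j$ are $P_0$-a.s.\ finite, then on that event the $(j+1)$-st is, by the observation above and Lemma~\ref{tree2}, the first right excursion of an ERW that is again recurrent from the right --- its branching process dies out a.s.\ precisely because the original one does --- hence $P_0$-a.s.\ finite; a countable union of null sets finishes the proof. (Running the same induction with ``not recurrent'' in place of ``recurrent'' only shows that each successive excursion is infinite with \emph{positive} conditional probability, which does not suffice, so the second statement needs a genuinely different argument.)

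For the second statement, observe that $X$ makes at least $n$ right excursions, all finite, only if the chain defined by (\ref{def}) but started from $V_0=n$ --- call it $V^{(n)}$ --- dies out: indeed, by the path-to-tree correspondence of Section~\ref{sunny} (the $n$-fold analogue of (\ref{U})--(\ref{rec}), cf.\ Figure~\ref{tree_leaves}), on that event $V^{(n)}$ is realised as the number of upcrossings of $(k,k+1)$, $k\ge 0$, accumulated over those $n$ excursions, which is $0$ for all large $k$. Since $\{X\text{ makes infinitely many right excursions}\}$ entails that $V^{(n)}$ dies out for every $n$, and since by monotone coupling $P[\,V^{(n)}\text{ dies out}\,]$ is non-increasing in $n$, it is enough to prove $\lim_{n\to\infty}P[\,V^{(n)}\text{ dies out}\,]=0$. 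For this I would compare $V^{(n)}$ with the chain $(W_k)$ of (\ref{W}) started from $n$: the two have identical transition probabilities off $\{0,1,\dots,M-1\}$, and $(W_k)$ is irreducible (by (\ref{elli})), so $\{W_k\to\infty\}$ has probability $0$ or $1$; the argument behind (\ref{vw}) together with the proof of Lemma~\ref{tree2} shows that ``not recurrent from the right'' forces this probability to be $1$, i.e.\ $(W_k)$ is transient. Coupling $V^{(n)}$ with $(W_k)$ until the first visit to $\{0,1,\dots,M-1\}$ then gives $P[\,V^{(n)}\text{ dies out}\,]\le P[\,(W_k)\text{ started from }n\text{ ever reaches }\{0,\dots,M-1\}\,]$, and it remains to let $n\to\infty$ on the right.

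The main obstacle is precisely this last step: upgrading qualitative transience of $(W_k)$ to the quantitative statement that, started far out, $(W_k)$ reaches a fixed finite set with probability tending to $0$. I would obtain it from the fact that on the states $\ge M$ the chain increases in mean by $\lambda=\theta$ per step (recall $b=1$ for $\mu=\geo$), which is $>1$ exactly in the regime at hand, while its one-step conditional variance grows only linearly, so that a standard Lyapunov/supermartingale estimate applies; alternatively, off $\{0,\dots,M-1\}$ one can bound $(W_k)$ from below by an honest branching process with immigration of parameter $\theta>1$ and invoke Theorem~\ref{A}(i). The corresponding statements for left excursions then follow by the left-right symmetry of the construction in Section~\ref{brpr}.
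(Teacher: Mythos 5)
Your proof of the \emph{first} statement is essentially the paper's: induct on the excursion index, condition on the highest level up to which the environment has been modified, and apply Lemma~\ref{tree2} under the relaxed hypotheses~(\ref{KK}) and~(\ref{elK}), using that the associated $(\geo,\nu)$-branching process (and hence $\nu$) is unchanged because the cookies beyond that level are fresh. This matches the paper's argument step for step.

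For the \emph{second} statement you take a genuinely different route, and here the comparison is instructive. The paper argues directly on the walk: by~(\ref{elli}) and non-recurrence from the right, $P_0[D=\infty]>0$; each time the walk first reaches a new rightmost level it has, independently of the past, probability $P_0[D=\infty]$ never to backtrack below it, so the number of times the rightmost level increases is stochastically geometric, hence finite; if the walk made infinitely many right excursions this would force $\sup_n X_n<\infty$ while $\limsup_n X_n\ge 0$, contradicting Lemma~\ref{neww}. This is short and requires no branching-process input beyond what is already in Lemma~\ref{tree2}. Your route instead goes through the chain $V^{(n)}$ started from $n$. The reduction is correct: if the walk makes at least $n$ finite right excursions then the cumulative upcrossings of $(k,k+1)$ over those excursions realize $V^{(n)}$ and force it to die out, and monotone coupling reduces everything to showing $P\big[V^{(n)}\text{ dies out}\big]\to 0$, equivalently $P\big[W\text{ from }n\text{ hits }\{0,\dots,M-1\}\big]\to 0$. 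But this last step is genuinely nontrivial and is the crux you leave as a sketch. For a general irreducible transient chain on $\N_0$ it is \emph{not} automatic that the probability of hitting a fixed finite set tends to $0$ as the starting point grows; one really needs the drift/variance structure of $W$. The Lyapunov route you propose does work (e.g.\ $f(m)=m^{-\alpha}$ with $\alpha\in(0,\delta-1)$, using that the one-step drift at $m\ge M$ is $\delta>1$ and the conditional second moment is $O(m)$), but it requires moment estimates that the paper's argument avoids entirely. Your alternative suggestion --- bounding $W$ below by a branching process with immigration and invoking Theorem~\ref{A}(i) --- does not apply directly, since Theorem~\ref{A}(i) concerns the survival probability of the stopped process started from $0$, not the probability that a process started from a large state ever drops to a fixed finite set. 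So: part one correct and same as the paper; part two a workable but substantially more involved detour whose key quantitative step is only sketched, whereas the paper's direct argument bypasses the issue.
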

\begin{proof} 
  Let the ERW be recurrent from the right. By Definition~\ref{rrtr}
  the first excursion to the right is a.s.\ finite. By
  Lemma~\ref{tree2} the corresponding $(\geo,\nu)$-branching process dies out a.s..
  Let $i\ge 1$ and assume that all excursions to the right up to the
  $i$-th one have been proven to be $P_0$-a.s.\ finite. If the ERW
  starts the $(i+1)$-st excursion to the right of 0 then it finds
  itself in an environment which has been modified by the previous $i$
  excursions up to a random level $R\ge 1$, beyond which the
  environment has not been touched yet. Therefore, conditioning on the
  event $\{R=K\}$, $K\ge 1$, puts us within the assumptions of
  Lemma~\ref{tree2}: the random walk starts the right excursion from
  $0$ in a random cookie environment which satisfies (\ref{KK}). But
  the corresponding $(\geo,\nu)$-branching process is still the same
  and, thus, dies out a.s.. Therefore, this excursion, which is the
  $(i+1)$-st excursion of the walk, is a.s. finite on $\{R=K\}$. Since by
  our induction assumption the events $\{R=K\}$, $K\ge 1$, form a
  partition of a set of full measure, we obtain the first statement
  of the lemma.

  For the second statement let 
  \[
D:=\inf\{n\ge 1\mid X_n<X_0\}
\]
be the first time that the walk backtracks
   below its starting point. 
Due to (\ref{elli}), $P_0[X_1=1]>0$. Therefore, since the walk is assumed to be not recurrent from the right,
  \begin{equation}\label{D>0}
  P_0[D=\infty]>0.
  \end{equation}
  Denote by $K_i$ the right-most visited site before the end of the
  $i$-th excursion and define $K_i=\infty$ if there is no $i$-th right
  excursion or if the $i$-th excursion to the right covers $\N$. Then
  the number of $i\ge 1$ such that $K_i<K_{i+1}$, is stochastically
  bounded from above by a geometric distribution with parameter
  $P_0[D=\infty]$. Indeed, each time the walk reaches a level
  $K_i+1<\infty$, which it has never visited before, it has
  probability $P_0[D=\infty]$ never to backtrack again below the level
  $K_i+1$, independently of its past. Therefore, $(K_i)_i$ increases
  only a finite number of times. Hence $P_0$-a.s.\ $R:=\sup\{K_i\mid
  i\ge 1, K_i<\infty\}<\infty$. Now, if the walk did an infinite
  number of excursions to the right, then, $P_0$-a.s.\ $\sup_n X_n=R<\infty$ and $\limsup_n X_n\ge 0$, which is impossible due to Lemma \ref{neww}.
\end{proof}
\begin{prop}\label{rt} The ERW is recurrent from the right if and only 
  if $\delta \le 1$.  Similarly, it is recurrent from the left if and
  only if $\delta \ge -1$.
\end{prop}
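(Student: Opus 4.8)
The plan is to combine Lemma~\ref{tree2} (together with Lemma~\ref{rr}), which reduces recurrence from the right to the a.s.\ extinction of the associated $(\geo,\nu)$-branching process, with Corollary~\ref{AA}, which tells us that this branching process dies out a.s.\ if and only if $\theta=\lambda/b\le 1$. Since $\mu=\geo$ we have $b=1$ by the remark following assumption~(B), so the criterion becomes simply $\lambda\le 1$, where $\lambda=\sum_{k\ge -M}k\,\nu(\{k\})$ is the mean of the migration distribution $\nu$. Thus the entire proposition will follow once I verify two bookkeeping facts: first, that the hypotheses of Theorem~\ref{A} (i.e.\ (\ref{-M}), (A), (B)) are met for this $(\geo,\nu)$-process, and second, that $\lambda=\delta$ (for the right-recurrence statement).

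For the first point: condition (B) and the part of (A) concerning $f$ hold automatically because $\mu=\geo$, as already noted in the text. The condition $f(0)>0$ also holds since $\geo$ puts mass $1/2$ on $0$. For (\ref{-M}), recall $\eta_k=S_M^{(k)}-M$, so $\eta_k\ge -M$ always, giving $\nu(\{k\ge -M\})=1$; and $\nu(\N)>0$ needs $P_0[S_M^{(0)}>M]>0$, i.e.\ a positive chance of at least $M+1$ successes before the $M$-th failure, which follows from the fact that once the $M$ cookies are used up the coins are fair, so arbitrarily many successes can precede the $M$-th failure — this uses no ellipticity at all. Finally $\lambda<\infty$: since after the $M$-th... wait, more carefully, $S_M^{(0)}$ is a sum of $M$ i.i.d.-after-conditioning geometric-type random variables bounded in expectation; concretely $E_0[S_M^{(0)}]=\sum_{i=1}^M E_0[\text{successes between }(i-1)\text{-st and }i\text{-th failure}]$, each term finite, so $\lambda<\infty$. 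Hence all hypotheses hold.

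The key computation is $\lambda=\delta$. One way: $E_0[S_M^{(0)}]$ counts the total number of successes in $(Y_i^{(0)})_{i\ge1}$ before the $M$-th failure. Group the trials into the $M$ blocks delimited by the failures; the $i$-th block (trials strictly between the $(i-1)$-st and $i$-th failure) consists of some number of successes followed by one failure, and by the tower property its expected number of successes equals $\EE\big[\om(0,i)/(1-\om(0,i))\big]$ — but that is not obviously $\delta$. A cleaner route is to note that $\lambda = E_0[\eta_0]=E_0[S_M^{(0)}]-M = E_0[(\#\text{successes})-(\#\text{failures})\text{ up to the }M\text{-th failure}]$, i.e.\ the expected value of the random-walk displacement accumulated while exactly the $M$ biased coins are in play. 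Since the walk returns to fair coins after the $M$-th failure, and since $2\om(0,i)-1$ is the expected displacement on the $i$-th visit, a direct summation (conditioning on $\om$, then on the successive block lengths, and using that a block of successes then a failure contributes, in expectation under $P_{0,\om}$, the running drift) gives $\lambda = \EE[\sum_{i=1}^M (2\om(0,i)-1)] = \delta$. I would phrase this as: under $P_{0,\om}$, the quantity $S_M^{(0)}-M$ is the position, at the time of the $M$-th failure, of a walk that takes a $+1$ step with probability $\om(0,i)$ on its $i$-th step; by optional stopping / a telescoping argument its expectation is $\sum_{i=1}^M(2\om(0,i)-1)$, and averaging over $\PP$ yields $\delta$.

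The symmetric statement — recurrence from the left iff $\delta\ge -1$ — follows by the reflection $x\mapsto -x$, which turns the left excursion into a right excursion for the mirrored environment $\wom(z,i):=1-\om(-z,i)$; this replaces each drift $2\om(z,i)-1$ by its negative, so the corresponding parameter is $-\delta$, and ``recurrent from the left for $\om$'' $\iff$ ``recurrent from the right for $\wom$'' $\iff -\delta\le 1 \iff \delta\ge -1$. I expect the main obstacle to be the identity $\lambda=\delta$: not that it is deep, but that making the block-by-block / optional-stopping argument fully rigorous requires a short integrability check (the number of trials before the $M$-th failure is a.s.\ finite with finite expectation, so optional stopping applies) and care that the conditional expectation manipulations are valid cookie-by-cookie before averaging under $\PP$. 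Everything else is a direct invocation of Lemma~\ref{tree2}, Lemma~\ref{rr}, and Corollary~\ref{AA}.
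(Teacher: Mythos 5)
Your proposal is correct and follows the same skeleton as the paper's own proof: Lemma~\ref{tree2} reduces right-recurrence to a.s.\ extinction of the $(\geo,\nu)$-branching process, Corollary~\ref{AA} characterizes extinction by $\theta\le 1$, and then one computes $\theta=\lambda=\delta$ (that computation is the paper's Lemma~\ref{th}). Your appeal to Lemma~\ref{rr} is superfluous here --- Lemma~\ref{tree2} alone suffices, since recurrence from the right concerns only the first excursion --- but harmless.

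The one substantive difference is the proof of $\lambda=\delta$. The paper splits $S_M^{(0)}$ into the number of successes among the first $M$ trials, namely $M-F$, plus a remainder which, conditionally on $F$, is negative binomial with parameters $(M-F,1/2)$ and hence has conditional mean $M-F$; unwinding gives $E_0[S_M^{(0)}]-M=\delta$. You instead regard $S_M^{(0)}-M=\sum_{i=1}^{\sigma}Y_i^{(0)}$ as the value, at the time $\sigma$ of the $M$-th failure, of an additive process with bounded increments, and apply optional stopping to the compensated martingale $\sum_{i\le n}\bigl(Y_i^{(0)}-(2\om(0,i)-1)\bigr)$, using $E_{0,\om}[\sigma]<\infty$ and the fact that the drift term vanishes for $i>M$ (and $\sigma\ge M$). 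Both arguments are short; yours avoids identifying the conditional law and is arguably more transparent, while the paper's avoids invoking optional stopping. Either is acceptable.

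One small inaccuracy worth correcting: you claim that $\nu(\N)>0$ ``uses no ellipticity at all.'' That is false. If, say, $\om(0,i)=0$ for all $i\le M$ $\PP$-a.s., then the first $M$ trials are all failures, $\sigma=M$ and $S_M^{(0)}=0$ deterministically, so $\nu$ is a point mass at $-M$ and $\nu(\N)=0$. For the fair-coin argument you invoke to apply, the $M$-th failure must with positive probability occur later than trial $M$, and this is exactly what the positive half of assumption~(\ref{elli}), $\EE[\prod_{i=1}^M\om(0,i)]>0$, guarantees. Since the proposition operates under (\ref{elli}), this does not endanger your argument, but the parenthetical claim should be dropped or fixed.
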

For the proof we need the
 next lemma, which relates the parameter $\delta$ of the ERW and the
parameter $\theta$ of the branching process with migration.
\begin{lemma}\label{th}
  Let $\nu$ be the distribution of $S_M^{(0)}-M$ under $P_0$. Then
  $\theta$ defined in (\ref{theta}) for the $(\geo,\nu)$-branching
  process is equal to $\delta$ defined in (\ref{D}).
\end{lemma}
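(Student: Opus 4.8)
The plan is to compute the mean of the migration distribution $\nu$ directly and identify it with $\delta$. Since $\mu=\geo$ has $b=f''(1)/2=1$, we have $\theta=\lambda/b=\lambda$, where $\lambda=\sum_{k\ge -M}k\,\nu(\{k\})$ is by definition the mean of $\nu$, i.e.\ $\lambda=E_{P_0}\big[S_M^{(0)}-M\big]=E_{P_0}\big[S_M^{(0)}\big]-M$. So it suffices to show $E_{P_0}\big[S_M^{(0)}\big]=M+\delta$. I would work first under the quenched measure $P_{0,\om}$, averaging over $\PP$ only at the very end. Let $N$ be the index of the trial producing the $M$-th failure in the sequence $\big(Y_i^{(0)}\big)_{i\ge 1}$; among the trials $1,\dots,N$ there are then exactly $M$ failures and $N-M$ successes, and these successes are precisely those preceding the $M$-th failure, so $S_M^{(0)}=N-M$. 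In particular $N\ge M$ always, and $N$ is a stopping time for the natural filtration of $\big(Y_i^{(0)}\big)_{i\ge1}$; it is a.s.\ finite, since for $i>M$ the trials fail with probability $1/2$.

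The key step is an optional stopping argument. Under $P_{0,\om}$ the process $M_n:=\sum_{i=1}^n\big(Y_i^{(0)}-(2\om(0,i)-1)\big)$ is a martingale with increments bounded in absolute value by $2$, and $E_{P_{0,\om}}[N]<\infty$ (after the first $M$ trials all remaining trials fail with probability $1/2$, so $N-M$ is stochastically dominated by a sum of $M$ independent $\geo$-type waiting times, giving $E_{P_{0,\om}}[N]\le 3M$). Hence $|M_{n\wedge N}|\le 2N$ is dominated and optional stopping yields $E_{P_{0,\om}}\big[\sum_{i=1}^N Y_i^{(0)}\big]=E_{P_{0,\om}}\big[\sum_{i=1}^N(2\om(0,i)-1)\big]$. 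Now comes the point that makes everything collapse: because $\om(0,i)=1/2$ for $i>M$, the summand $2\om(0,i)-1$ vanishes for $i>M$, and since $N\ge M$ deterministically, $\sum_{i=1}^N(2\om(0,i)-1)=\sum_{i=1}^M(2\om(0,i)-1)$ pointwise, so no expectation is needed on the right. On the left, $\sum_{i=1}^N Y_i^{(0)}$ equals the number of successes minus the number of failures, i.e.\ $(N-M)-M=N-2M$.

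Combining the two displays gives $E_{P_{0,\om}}[N]-2M=\sum_{i=1}^M(2\om(0,i)-1)$, hence $E_{P_{0,\om}}\big[S_M^{(0)}\big]=E_{P_{0,\om}}[N]-M=M+\sum_{i=1}^M(2\om(0,i)-1)$. Taking $\EE$ and recalling the definition (\ref{D}) of $\delta$ gives $E_{P_0}\big[S_M^{(0)}\big]=M+\delta$, so $\lambda=\delta$ and therefore $\theta=\delta$. The only non-formal point is the verification $E_{P_{0,\om}}[N]<\infty$ needed to legitimize optional stopping, which is minor and follows from the $1/2$-values of the tail of the cookie stack; alternatively one can bypass the martingale altogether by evaluating $E_{P_{0,\om}}\big[S_M^{(0)}\big]$ as a sum over trials — each of the first $M$ trials contributes its success probability $\om(0,i)$ because it necessarily precedes the $M$-th failure, while each trial $i>M$ contributes $\tfrac12\,P_{0,\om}[N\ge i]$, and $\sum_{i>M}\tfrac12\,P_{0,\om}[N\ge i]=\tfrac12\,E_{P_{0,\om}}\big[S_M^{(0)}\big]$, which leads to the same identity.
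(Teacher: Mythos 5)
Your proof is correct, but the route is genuinely different from the paper's. You reduce the computation of $E_0[S_M^{(0)}]$ to a Wald-type identity by applying optional stopping to the martingale $M_n=\sum_{i=1}^n\bigl(Y_i^{(0)}-(2\om(0,i)-1)\bigr)$ at the stopping time $N$ (index of the $M$-th failure), exploiting that $\om(0,i)=1/2$ for $i>M$ and $N\ge M$ to make the compensator sum collapse to $\sum_{i=1}^M(2\om(0,i)-1)$ deterministically. The paper instead conditions on $F$, the number of failures among the first $M$ trials: given $F$, the number of successes occurring after trial $M$ but before the $M$-th failure is negative binomial with parameters $M-F$ and $1/2$, hence has mean $M-F$; this gives $E_0[S_M^{(0)}-(M-F)]=E_0[M-F]$, from which the identity follows after subtracting $E_0[F]$. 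Both are clean one-paragraph computations; the paper's is purely combinatorial and avoids any integrability check, while yours is a more ``structural'' argument (and your alternative trial-by-trial summation, contributing $\om(0,i)$ for $i\le M$ and $\tfrac12 P_{0,\om}[N\ge i]$ for $i>M$, is also a valid third route). The integrability point $E_{P_{0,\om}}[N]\le 3M$ that you flag is indeed the only thing requiring a word, and your justification of it is correct.
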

\begin{proof}[Proof of Lemma \ref{th}.] For $\mu=\geo$ the parameter $b$ defined
  in (A) equals 1. Hence, by (\ref{defla}),
  $\theta=\la=E_0[S_M^{(0)}-M]$. Thus it suffices to show that 
\begin{equation}\label{hello}
E_0[S_M^{(0)}]-M=\delta.
\end{equation}
This has already been observed in \cite[Lemma 3.3]{BS07}.
For
completeness, we include a proof.
  Let $F:=\#\{1\le i\le M\mid Y^{(0)}_i=-1\}$ be the number of
  failures among the first $M$ trials.  Then $M-F$ is the number of
  successes among the first $M$ trials.  Therefore, since $S_M^{(0)}$
  is the total number of successes prior to the $M$-th failure,
  $S_M^{(0)}-(M-F)$ is the number of successes after the $M$-th trial
  and before the $M$-th failure. Given $F$, its distribution is negative binomial
 with parameters $M-F$ and $p=1/2$, i.e.\ the $(M-F)$-fold
  convolution of \geo, and therefore has mean $M-F$.  Thus,
  \[
    E_0[S_M^{(0)}-(M-F)]=E_0[E_0[S_M^{(0)}-(M-F)\mid F]]=E_0[M-F].\]
Subtracting $E_0[F]$ from both sides we obtain
\[E_0[S_M^{(0)}] -M= M-2
\sum_{i=1}^M\EE[1-\om(0,i)]=\sum_{i=1}^M(2\EE[\om(0,i)]-1)=\delta.
\]
\end{proof}
\begin{proof}[Proof of Proposition \ref{rt}]
  Due to Lemma \ref{tree2} the walk is recurrent from the right iff
  the $(\geo,\nu)$-branching process dies out a.s., where $\nu$ is the
  distribution of $S_M^{(0)}-M$. By the first statement of Corollary
  \ref{AA} this is the case iff $\theta\le 1$. The first claim of the proposition follows
now from Lemma \ref{th}.
 The second one follows by symmetry.
\end{proof}
\begin{proof}[Proof of Theorem \ref{rt0}]
  If $\delta > 1$ then by Proposition \ref{rt} the walk is not recurrent from the right but
  recurrent from the left.  If the walk returned
  infinitely often to 0 then it would also make an infinite number of
  excursions to the
  right 
  which is impossible due to Lemma \ref{rr}.  Hence the ERW visits 0
  only finitely often. Since any left excursion is finite due to Lemma
  \ref{rr} the last excursion is to the right and is infinite. Consequently, $P_0$-a.s.\ $\liminf_nX_n\geq 0$, and therefore, 
  due to Lemma
  \ref{neww},  $X_n\to \infty$. Similarly, $\delta<-1$ implies $P_0$-a.s.\ $X_n\to\-\infty$.

In the remaining case $\delta \in[-1,1]$ all excursions from 0 are
finite due to Proposition \ref{rt}. Hence, 0 is visited infinitely many times.
\end{proof}
\begin{rem}{\rm The equivalence (\ref{lars}) also holds
    correspondingly for one-dimensional random walks $(X_n)_{n\ge 0}$ in
    i.i.d.\ random environments (RWRE) and branching processes
    $(V_k)_{k\ge 0}$ in random environments, i.e.\ whose offspring distribution is
    geometric with a random parameter. This way the recurrence theorem
    due to Solomon \cite[Th.\ (1.7)]{So75} for RWRE can be
    deduced from results by Athreya and Karlin, see
    \cite[Chapter VI.5, Corollary 1 and Theorem 3]{AN72}.}
\end{rem}
\section{A renewal structure for transient ERW}\label{rs}

A powerful tool for the study of random walks in random environments
(RWRE) is the so-called \textit{renewal} or \textit{regeneration
  structure}. It is already present in \cite{KKS75}, \cite{Ke77} and
was first used for multi-dimensional RWRE in \cite{SZ99}. It has been
mentioned in \cite[p.\ 114, Remark 3]{Ze05} that this renewal
structure can be straightforwardly adapted to the setting of
directionally transient ERW in i.i.d.\ environments in order to give a
law of large numbers.  The proofs of positivity of speed and of a
central limit theorem for once-excited random walks in dimension $d\ge 2$ in \cite{BeR}
were also phrased in terms of this renewal structure.  We shall do the
same for the present model.

We continue to assume (\ref{H0}) and (\ref{elli}). Let $\delta>1$, where $\delta$ is the average drift defined in (\ref{D}). This means, 
due to Theorem \ref{rt0}, that $P_0$-a.s.\ $X_n\to\infty$.
Moreover, by Proposition \ref{rt}, the walk is not recurrent from the right, which implies,
as we already mentioned, see (\ref{D>0}),  that $P_0[D=\infty]>0$.  
Hence there are
$P_0$-a.s.\ infinitely many random times $n$, so-called
\textit{renewal} or \textit{regeneration times}, with the defining property
that $X_m<X_n$ for all $0\le m<n$ and $X_m\ge X_n$ for all $m>n$.
Call the increasing enumeration of these times $(\tau_k)_{k\ge 1}$,
see also Figure~\ref{ff}.
\begin{figure}[t]
\begin{center}
 \psfrag{aa}{$X_{\tau_1}$}
 \psfrag{bb}{$X_{\tau_2}$}
 \psfrag{cc}{$\tau_1$}
 \psfrag{dd}{$\tau_2$}
\psfrag{nn}{$n$}
\epsfig{figure=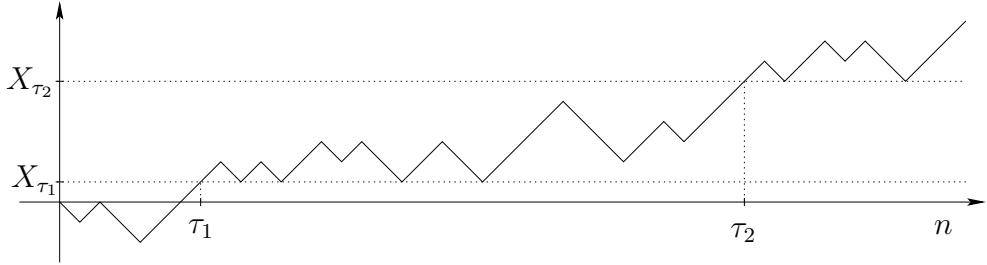,height=100pt}\vspace*{-0mm}
\end{center}
\caption{\footnotesize A random walk path with two renewals.}\label{ff}
\end{figure}
  Then the sequence
$(X_{\tau_1},\tau_1),(X_{\tau_{k+1}}-X_{\tau_k},\tau_{k+1}-\tau_k)\
(k\ge 1)$ of random vectors is independent under $P_0$.
Furthermore, the random vectors
$(X_{\tau_{k+1}}-X_{\tau_k},\tau_{k+1}-\tau_k),\ k\ge 1$, have the
same distribution under $P_0$. For multidimensional RWRE and once-excited random walk the corresponding statement is \cite[Corollary 1.5]{SZ99} and
\cite[Proposition 3]{BeR}, respectively.  It follows from the renewal theorem, see
e.g.\ \cite[Lemma 3.2.5]{zei}, that
\begin{equation}\label{ren}
E_0[X_{\tau_2}-X_{\tau_1}]=P_0[D=\infty]^{-1}<\infty.
\end{equation} Moreover, the ordinary strong law of large numbers implies that 
\begin{equation}\label{v}
  \lim_{n\to\infty}\frac{X_n}{n}=\frac{E_0[X_{\tau_2}-X_{\tau_1}]}
{E_0[\tau_2-\tau_1]}=:v\qquad\mbox{$P_0$-a.s.,}
\end{equation}
see \cite[Proposition 2.1]{SZ99} and \cite[Theorem 3.2.2]{zei} for
RWRE and also \cite[Theorem 2]{BeR} for once-ERW.  Therefore,
\begin{equation}\label{first}
\mbox{$v>0$\quad if and only if \quad  $E_0[\tau_2-\tau_1]<\infty.$}
\end{equation}
  If, moreover,
\begin{equation}\label{second}
E_0[(\tau_2-\tau_1)^2]<\infty
\end{equation}
then the result claimed in Theorem \ref{clt} holds with
\begin{equation}\label{si2}
\si^2:=\frac{E_0\left[\left(X_{\tau_2}-X_{\tau_1}-v(\tau_2-\tau_1)\right)^2\right]}
{E_0[\tau_2-\tau_1]}>0
\end{equation}
see \cite[Theorem 4.1]{Sz00} for RWRE and \cite[Theorem 3 and Remark
1]{BeR} for once-ERW.  

Thus, in order to prove Theorems \ref{v0} and \ref{clt} we need
to control the first and the second moment, respectively, of $\tau_2-\tau_1$. We start by introducing for $k\ge 0$ the number
\begin{equation}\label{Uk}
D_k:=\#\left\{n\mid \tau_1<n<\tau_2,\ X_n=X_{\tau_2}-k,
\ X_{n+1}=X_{\tau_2}-k-1\right\}
\end{equation}
of downcrossings of the edge $(X_{\tau_2}-k,X_{\tau_2}-k-1)$ between
the times $\tau_1$ and $\tau_2$.
\begin{lemma}\label{mom} Assume that the ERW is transient to the right
  and let $p\ge 1$. Then the $p$-th moment of $\tau_2-\tau_1$ under
  $P_0$
  is finite if and only if the $p$-th moment of $\sum_{k\ge 1}D_k$ is
  finite.
\end{lemma}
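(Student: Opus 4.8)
The plan is to compare the two random variables through two elementary pathwise relations that hold $P_0$-a.s.\ on the event of full measure (guaranteed by transience to the right) that $\tau_1<\tau_2$ are two \emph{consecutive} renewal times. Write $\Sigma:=\sum_{k\ge1}D_k$. First I would observe that $\Sigma$ is precisely the number of leftward steps the walk makes between times $\tau_1$ and $\tau_2$. Indeed, since $\tau_1$ is a renewal time, $X_{\tau_1+1}\ge X_{\tau_1}$, so there is no leftward step at time $\tau_1$; and since both $\tau_1$ and $\tau_2$ are renewal times, one has $X_{\tau_1}\le X_n< X_{\tau_2}$ for every $n$ with $\tau_1<n<\tau_2$, so each leftward step in $(\tau_1,\tau_2)$ goes from some level $V\in\{X_{\tau_1}+1,\dots,X_{\tau_2}-1\}$ to $V-1$ and is thus counted by $D_{X_{\tau_2}-V}$, while $D_0=0$ and $D_k=0$ for $k\ge X_{\tau_2}-X_{\tau_1}$. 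Counting the $\tau_2-\tau_1$ unit steps performed in $(\tau_1,\tau_2)$ according to sign---rightward steps minus leftward steps equal the net displacement $X_{\tau_2}-X_{\tau_1}$, rightward plus leftward steps equal $\tau_2-\tau_1$---then gives the identity
\[\tau_2-\tau_1=(X_{\tau_2}-X_{\tau_1})+2\,\Sigma.\]

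The crucial point is the reverse estimate $X_{\tau_2}-X_{\tau_1}\le 1+\Sigma$, which I would obtain by showing that $D_{X_{\tau_2}-V}\ge1$ for every level $V\in\{X_{\tau_1}+1,\dots,X_{\tau_2}-1\}$. Fix such a $V$ and let $n_V$ be the first time the walk visits $V$. Since the walk moves by unit steps and $X_0=0\le X_{\tau_1}<V<X_{\tau_2}$, the time $n_V$ lies strictly between $\tau_1$ and $\tau_2$ and satisfies $X_m<X_{n_V}=V$ for all $m<n_V$; that is, $n_V$ already meets the ``past'' half of the defining property of a renewal time. Because $\tau_1<n_V<\tau_2$ and $\tau_1,\tau_2$ are consecutive renewal times, $n_V$ cannot be a renewal time, so the ``future'' half must fail: there is $m>n_V$ with $X_m<V$. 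Since $X_m\ge X_{\tau_2}>V$ for every $m\ge\tau_2$, the first such $m$, call it $m'$, satisfies $n_V<m'<\tau_2$, and then $X_{m'-1}=V$, $X_{m'}=V-1$, so the step at time $m'-1$ is a leftward step from $V$ to $V-1$ lying in $(\tau_1,\tau_2)$. Hence $D_{X_{\tau_2}-V}\ge1$; summing over $V$ yields $\Sigma\ge X_{\tau_2}-X_{\tau_1}-1$.

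Combining the two relations (and $X_{\tau_2}-X_{\tau_1}\ge1$) gives $2\,\Sigma\le\tau_2-\tau_1\le 1+3\,\Sigma$, so $\tau_2-\tau_1$ and $\Sigma$ are comparable up to a multiplicative constant and an additive constant. The asserted equivalence of $p$-th moments under $P_0$ follows at once: from the left inequality, $\Sigma^p\le 2^{-p}(\tau_2-\tau_1)^p$, and from the right inequality together with convexity of $x\mapsto x^p$ for $p\ge1$, $(\tau_2-\tau_1)^p\le(1+3\Sigma)^p\le 2^{p-1}\bigl(1+3^p\Sigma^p\bigr)$; taking expectations gives both implications. I expect the only delicate point to be the bookkeeping in the structural step---verifying that $n_V$ genuinely satisfies the ``past'' half of the renewal property (which uses $X_0=0$ and the unit-step property to rule out any earlier visit to level $V$ or above) and that the forced leftward step really lands inside the open interval $(\tau_1,\tau_2)$ rather than after $\tau_2$. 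The moment comparison itself is routine.
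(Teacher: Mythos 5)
Your argument is correct and follows essentially the same route as the paper: you establish the exact identity $\tau_2-\tau_1=(X_{\tau_2}-X_{\tau_1})+2\sum_{k\ge1}D_k$ by sign-counting steps, then prove $X_{\tau_2}-X_{\tau_1}\le 1+\sum_{k\ge1}D_k$ by showing each intermediate level $V$ must be downcrossed (otherwise the first visit time to $V$ would be a renewal time strictly between $\tau_1$ and $\tau_2$), and combine these into the two-sided bound $2\sum_kD_k\le\tau_2-\tau_1\le 1+3\sum_kD_k$, from which the $p$-th moment equivalence is immediate. The only difference from the paper is that you spell out the elementary bookkeeping that the paper leaves implicit.
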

\begin{proof}
  The number of upcrossings between $\tau_1$ and $\tau_2$ is
  $X_{\tau_2}-X_{\tau_1}+\sum_{k\ge 1}D_k$, since
  $X_{\tau_1}<X_{\tau_2}$ and since each downcrossing needs to be
  balanced by an upcrossing.  Each step is either an
  upcrossing or a downcrossing, therefore,
\begin{equation}\label{pick}
\tau_2-\tau_1=X_{\tau_2}-X_{\tau_1}+2\sum_{k\ge 1}D_k.
\end{equation}
For every $k\in\{X_{\tau_1}+1,\ldots,X_{\tau_2}-1\}$ there is a
downcrossing of the edge $(k,k-1)$, otherwise $k$ would be
another point of renewal. Hence,  $X_{\tau_2}-X_{\tau_1}\le
1+\sum_{k\ge 1}D_k$ and, by (\ref{pick}),
\[2\sum_{k\ge 1}D_k\le \tau_2-\tau_1\le 1+3\sum_{k\ge 1}D_k.\]
This implies the claim.
\end{proof}

To interpret $(D_k)_{k\ge 0}$ as a branching process (see Figure~\ref{ff1}) we define for $m\in\N$ and $k\in \Z$
\begin{equation}\label{fs}
F_0^{(k)}:=0,\quad F_m^{(k)}
  := \#\text{ of failures in
$\big(Y_{i}^{(k)}\big)_{i\ge 1}$ prior to the $m$-th success.}
\end{equation}
(Compare this to the definition of $S_m^{(k)}$ in (\ref{sucks}).)
\begin{figure}[t]
\begin{center}
 \psfrag{xx}{$X_{\tau_1}$}
 \psfrag{yy}{$X_{\tau_2}$}
 \psfrag{a}{$D_0=0$}
\psfrag{b}{$D_1=1$}
\psfrag{c}{$D_2=2$}
\psfrag{d}{$D_3=4$}
\psfrag{e}{$D_4=4$}
\psfig{figure=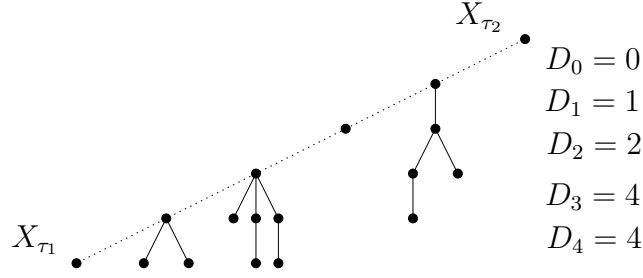,height=100pt}\vspace*{-0mm}
\end{center}
\caption{\footnotesize For the path in Figure~\ref{ff} the process
  $(D_k)_{k\ge 0}$ is realized as (0,1,2,4,4,0,0,\ldots).  The solid
  lines represent downcrossings. The thick dots on the dashed line
  correspond to the single immigrant in definition
  (\ref{VV}).}\label{ff1}
\end{figure}
Let
\begin{align}\label{VV}
V_0&:=0,\quad V_{k+1}\ :=\ F^{(k)}_{V_k+1},\quad k\ge 0;\\
  \label{nv}
  \wV_k&:=V_k\won_{\{k<N^{(V)}\}},\quad\text{ where
}\quad N^{(V)}:=\inf\{k\ge 1\mid V_k=0\}.
\end{align}
\begin{lemma}\label{ffs} Assume that the ERW is transient to the right.
Then $(D_k)_{k\ge 0}$ and $(\wV_k)_{k\ge 0}$ have the same distribution under $P_0$.
\end{lemma}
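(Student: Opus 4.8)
The plan is to derive, pathwise, an explicit recursion for $(D_k)_{k\ge 0}$ in terms of the coin‑toss variables $\big(Y_i^{(\cdot)}\big)$ which exhibits $(D_k)_{k\ge 0}$ as exactly the stopped iteration defining $(\wV_k)_{k\ge 0}$ in (\ref{VV})--(\ref{nv}), and then to promote this pathwise identity to an equality in law by invoking the i.i.d.\ structure (\ref{H0}) of the environment together with the renewal structure recalled at the beginning of this section.

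For the pathwise step, put $L:=X_{\tau_2}-X_{\tau_1}\ge 1$ and, for $0\le k\le L$, write $A_k:=X_{\tau_2}-k$ for the site at ``level $k$ counted from $X_{\tau_2}$''. Since $\tau_1<\tau_2$ are consecutive renewal times, between times $\tau_1$ and $\tau_2$ the walk stays in $\{X_{\tau_1},\dots,X_{\tau_2}\}$, reaches $X_{\tau_2}$ only at time $\tau_2$, and visits none of $A_1,\dots,A_L$ before $\tau_1$ or after $\tau_2$; hence the $i$‑th visit to $A_k$ (for $1\le k\le L$) lies in the block and consumes the coin $Y_i^{(A_k)}$. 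Fix now $1\le k\le L$. Each visit to $A_k$ in the block is followed by a success (a right step, i.e.\ an up‑crossing of $\{A_k,A_{k-1}\}$) or a failure (a left step, i.e.\ a down‑crossing of $\{A_k,A_{k+1}\}$, counted by $D_k$). The down‑crossings of $\{A_{k-1},A_k\}$ are the left steps out of $A_{k-1}$, so they number $D_{k-1}$ (with $D_0:=0$); since moreover the walk enters the block at $X_{\tau_1}\le A_k$ and leaves it at $X_{\tau_2}\ge A_{k-1}$, the edge $\{A_{k-1},A_k\}$ is crossed a net of exactly one step upward, whence the number of successes at $A_k$ is $D_{k-1}+1$ while the number of failures at $A_k$ is $D_k$. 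The walk also leaves $A_k$ for the last time by a success, for otherwise it would be strictly below $A_k$ and would still have to return to $A_k$ before reaching $X_{\tau_2}$. Reading $(Y_i^{(A_k)})_{i\ge 1}$ in order, this says that the $(D_{k-1}+1)$‑th success in that sequence occurs at index $D_{k-1}+1+D_k$ and is preceded by exactly $D_k$ failures, i.e.
\[
D_k\ =\ F^{(A_k)}_{D_{k-1}+1},\qquad 1\le k\le L.
\]
Finally, exactly as in the proof of Lemma~\ref{mom}, every level strictly between $X_{\tau_1}$ and $X_{\tau_2}$ is down‑crossed in the block, so $D_k\ge 1$ for $1\le k\le L-1$; all visits to $A_L=X_{\tau_1}$ in the block are successes (the walk never steps below $X_{\tau_1}$ after $\tau_1$), so the recursion forces $D_L=0$; and $D_k=0$ for all $k>L$ because the walk never visits a site below $X_{\tau_1}$ in the block. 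Thus $L=\inf\{k\ge 1\mid D_k=0\}$ and $(D_k)_{k\ge 0}$ is precisely the process obtained by running $\overline{V}_0:=0$, $\overline{V}_k:=F^{(A_k)}_{\overline{V}_{k-1}+1}$ and freezing it at $0$ from the first $k\ge 1$ at which it hits $0$ (cf.\ Figure~\ref{ff1}).

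Comparing with (\ref{VV})--(\ref{nv}), this is the very definition of $(\wV_k)_{k\ge 0}$, save that $(\wV_k)$ uses the coin sequences at the deterministic sites $0,1,2,\dots$ while the process above uses those at the random sites $X_{\tau_2}-1,X_{\tau_2}-2,\dots$. By (\ref{H0}) the coin sequences at distinct sites are i.i.d.\ under $P_0$, so the only remaining point is that $X_{\tau_2}$ --- and hence the sites $A_k$ --- is itself a functional of the coins. This is dissolved by the renewal structure: $(D_k)_{k\ge 0}$ is a fixed measurable function of a single inter‑renewal block, and, by the properties recalled at the start of this section (the vectors $(X_{\tau_{j+1}}-X_{\tau_j},\tau_{j+1}-\tau_j)$, $j\ge 1$, are i.i.d.\ under $P_0$, and conditionally on the past the walk after $\tau_1$ explores a fresh i.i.d.\ stretch of environment to the right of $X_{\tau_1}$), it is enough to identify the law of $(D_k)_{k\ge 0}$ on that one block. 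Since the stopped iteration inspects only the coin sequences at the sites $X_{\tau_2}-1,\dots,X_{\tau_1}$ visited in the block, each up to its last visit, it merely re‑reads this fresh i.i.d.\ environment in the natural order, and after translating $X_{\tau_1}$ to the origin this yields $(D_k)_{k\ge 0}\stackrel{d}{=}(\wV_k)_{k\ge 0}$ under $P_0$.

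I expect the bookkeeping of successes, failures and last visits in the pathwise step to be routine. The genuinely delicate point is the last step: the apparent circularity between ``the sites $A_k$ depend on the whole block'' and ``the coins at $A_k$ are consumed freshly and in order'' must be handled carefully, and the cleanest route is to regard $(D_k)_{k\ge 0}$ as a deterministic function of an inter‑renewal block and to lean on the already established i.i.d.\ structure of such blocks, rather than to disentangle the coins by hand.
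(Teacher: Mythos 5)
Your pathwise step is correct and is essentially the same identity the paper uses: counting successes and failures at the site $X_{\tau_2}-k$ and using that the last exit from that site in the block is a right step gives $D_k = F^{(X_{\tau_2}-k)}_{D_{k-1}+1}$ for $1\le k\le L:=X_{\tau_2}-X_{\tau_1}$, that $D_L=0$, and that $D_k\ge 1$ for $1\le k<L$. So far so good, and your derivation of this recursion is a clean way to present it.

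The gap is in the last step, where you pass from the pathwise identity to the distributional conclusion. You acknowledge the circularity (the sites $A_k=X_{\tau_2}-k$ depend on the very coins the recursion reads) but then dissolve it by appealing to the i.i.d.\ structure of the inter-renewal blocks $(X_{\tau_{j+1}}-X_{\tau_j},\tau_{j+1}-\tau_j)_{j\ge 1}$ and asserting that the stopped iteration ``merely re-reads this fresh i.i.d.\ environment in the natural order.'' That assertion is exactly what needs proving, and i.i.d.-ness across blocks does not give it: it says nothing about the law of the coin sequences \emph{within} a single block read \emph{backward} from the random endpoint $X_{\tau_2}$. Two sources of dependence remain. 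First, $X_{\tau_2}$ is a functional of the coins at $X_{\tau_1},X_{\tau_1}+1,\dots$, so reading from $X_{\tau_2}$ downward is reading a sequence from a data-dependent index, which in general changes the law (compare: if $N$ is the index of the first head in a fair coin sequence, then $(C_N,C_{N-1},\dots,C_1)$ and $(C_1,\dots,C_N)$ have different laws). Second, the event $\{\tau_2=T_m\}$ itself carries conditioning — it requires that there be exactly one pre-$T_m$ renewal level and that the walk never backtrack below $m$ after $T_m$ — and this conditioning couples the coins inside the block with those at and above $X_{\tau_2}$. The paper's proof handles precisely these two issues: it first replaces $(D_k)$ by $(D_k^{(m)})$ (downcrossings before $T_m$ for a \emph{deterministic} $m$) through a partition over $m=X_{\tau_2}$, two applications of the strong Markov property (at $T_m$ and at $T_{m-K}$), translation invariance of the i.i.d.\ environment, and a telescoping identity $\sum_m P_0[\tau_1=T_{m-K}]=1$; only then does it invoke the Markov-chain comparison between $(D_k^{(K)})$ and $(\wV_k)$, which is the counterpart of your pathwise observation but with the random $X_{\tau_2}$ already removed. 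Your write-up skips the entire first stage, and without it the conclusion $(D_k)_{k\ge 0}\stackrel{d}{=}(\wV_k)_{k\ge 0}$ is not established.
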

\begin{proof}
  Fix an integer $K\ge 1$. For brevity, we set $\vec
  D:=(D_1,\ldots,D_K)$ and $\vec V:=(\wV_1,\ldots,\wV_K)$.  It
  suffices to show that
\begin{equation}\label{wet}
P_0\left[\vec D=\vec i\,\right]=P_0\left[\vec V=\vec i\,\right]
\end{equation}
for all $\vec i\in\N_0^K$.  Since both processes start from $0$ and
also stay at $0$ once they have returned to $0$ for the first time, it
is enough to consider vectors $\vec i$ whose entries are strictly
positive except for maybe the last one. And, since the process $(D_k)_{k\ge 0}$
eventually does reach $0$ $P_0$-a.s., namely at
$k=X_{\tau_2}-X_{\tau_1}<\infty,$ it suffices to consider only $\vec
i$ whose last entry is 0.  Thus, let $\vec
i=(i_1,\ldots,i_K)\in\N_0^K$ with $i_1,\ldots,i_{K-1}\ge 1$ and
$i_K=0$.  At first, we shall show that
\begin{equation}\label{french}
P_0\left[\vec D=\vec i\,\right]=P_0\left[\vec D^{(K)}=\vec i\,\right],
\end{equation}
where, for $m,k\ge 0$,
\begin{eqnarray}\label{Uk'}
D_k^{(m)}&:=&\#\{n< T_m\mid X_n=m-k,\ X_{n+1}=m-k-1\}\quad\mbox{and}\\
 \vec D^{(m)}&:=&(D_1^{(m)},\ldots,D_K^{(m)}).\nonumber
\end{eqnarray}
We start from the partition equation
\begin{equation}\label{dipl}
  P_0\left[\vec D=\vec i\,\right]=\sum_{m\ge 0}P_0\left[\vec D=
\vec i,\ \tau_2=T_m\right].
\end{equation}
However, on the event $\{\vec D=\vec i,\ \tau_2=T_m\}$, we have
$X_{\tau_1}=m-K$ by our choice of $\vec i$. Since $X_{\tau_1}\ge 0$, we
may start the summation in (\ref{dipl}) from $m=K$.
Moreover, comparing the definitions (\ref{Uk}) and (\ref{Uk'}), we
see that $\{\vec D=\vec i,\ \tau_2=T_m\}=\{\vec D^{(m)}=\vec i,\
\tau_2=T_m\},$ using that not only on the left but also on the right
event we have $X_{\tau_1}=m-K$.  Hence, the right hand side of
(\ref{dipl}) is equal to
\begin{equation}\label{ewr}
\sum_{m\ge K}P_0\left[\vec D^{(m)}=\vec i,\ \tau_2=T_m\right].
\end{equation}
Notice that the event $\{\tau_2=T_m\}$ occurs if and only if the ERW does
not fall below level $m$ after time $T_m$ and if exactly one of the
numbers $D_1^{(m)},\ldots,D_m^{(m)}$ is equal to 0. Then, by our choice of
$\vec i$, (\ref{ewr}) is equal to
\[\sum_{m\ge K}P_0\left[\vec D^{(m)}=\vec i,\ \forall k=K+1,\ldots,m:
  D_k^{(m)}\ge 1,\ \forall n\ge T_m:\ X_n\ge m\right].
\]
By the strong Markov property applied to the stopping time $T_m$ and
by independence in the environment this equals
\[\sum_{m\ge K}P_0\left[\vec D^{(m)}=\vec i,\ \forall k=K+1,\ldots,m:
  D_k^{(m)}\ge 1\right] P_m[D=\infty].
\]
Since $i_K=0$, this is the same as
\[
\sum_{m\ge K}P_0\left[\vec D^{(m)}=\vec i,\ \forall k=1,\ldots,m-K:
  D_k^{(m-K)}\ge 1\right] P_m[D=\infty].\] Applying the strong Markov
property once more, this time to $T_{m-K}$, and using the i.i.d.\
structure of the environment, we get that the above is equal to
\begin{align*}
\sum_{m\ge K}P_{m-K}&\left[\vec D^{(m)}=\vec i\,\right] 
P_0\left[\forall k=1,\ldots,m-K: D_k^{(m-K)}\ge 1\right]
P_m[D=\infty]\\
=P_{0}&\left[\vec D^{(K)}=\vec i\,\right]\sum_{m\ge K} 
P_0\left[\forall k=1,\ldots,m-K: D_k^{(m-K)}\ge 1\right]
P_{m-K}[D=\infty]\\
=P_{0}&\left[\vec D^{(K)}=\vec i\,\right]\sum_{m\ge K}
P_0[\tau_1=T_{m-K}]\ =\ P_{0}\left[\vec D^{(K)}=\vec i\,\right].
\end{align*}
This proves (\ref{french}). Now we need to show that
\begin{equation}
  \label{fr}
  P_{0}\left[\vec D^{(K)}=\vec i\,\right]=P_0\left[\vec V=\vec i\,\right].
\end{equation}
The proof is essentially the same as that of Proposition~2.2 of
\cite{BS06}.  At first, notice that, given
$\big(D^{(K)}_1,\dots,D^{(K)}_k\big)$, the distribution of $D^{(K)}_{k+1}$ depends
only on $D^{(K)}_k$. Therefore, the process $(D^{(K)}_k)_{0\le k\le
  K}$ is Markov, just as the process $(\wV_k)_{0\le k\le K}$.  Both
processes get absorbed after the first return to $0$. Then (\ref{fr})
will follow if we show that they have the same transition
probabilities and that $D^{(K)}_1$ has the same distribution as $\wV_1$.
Let $m\ge 1$ and $1\le k\le K-1$ or $m=k=0$.  Notice that
if the number $D^{(K)}_k$ of downcrossings of the edge $(K-k,K-k-1)$ prior to $T_K$
is $m$ then the number of upcrossings of the same edge prior to $T_K$
equals $m+1$.  Therefore, the number $D_{k+1}^{(K)}$ of downcrossings of the edge
$(K-k-1,K-k-2)$ prior to $T_K$ is equal to the number of failures in
$\big(Y^{(K-k-1)}_i\big)_{i\ge 1}$ before the $(m+1)$-st success,
which is $F^{(K-k-1)}_{m+1}$. On the other hand, if $\wV_k=m$ then, by
(\ref{VV}) and (\ref{nv}), $\wV_{k+1}= F^{(k)}_{m+1}$. But for all
$i,j\ge 0$ random variables $F^{(i)}_{m+1}$ and
$F^{(j)}_{m+1}$ have the same distribution. 
\end{proof}

\section{Law of large numbers and ballisticity}\label{Slln}
While (\ref{v}) gives the law of large numbers in the transient case
the renewal structure does not say anything about the recurrent case.
The following general result covers both the transient and the
recurrent case.
\begin{prop}\label{lln}
There is a deterministic $v\in [-1,1]$ such that $P_0$-a.s.\ $X_n/n\to v$ for
$n\to\infty$.
\end{prop}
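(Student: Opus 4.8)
\textit{Proof plan.} I would split according to whether $|\delta|>1$ (directional transience) or $|\delta|\le1$ (recurrence). In the transient regime there is nothing new to do: if $\delta>1$ then by Theorem~\ref{rt0} the walk is transient to the right, the renewal times $(\tau_k)$ of Section~\ref{rs} exist, and (\ref{v}) already displays the deterministic limit $v=E_0[X_{\tau_2}-X_{\tau_1}]/E_0[\tau_2-\tau_1]$, with the convention $v:=0$ when $E_0[\tau_2-\tau_1]=\infty$; the case $\delta<-1$ follows by reflecting the lattice, and $|v|\le1$ is automatic since $|X_{n+1}-X_n|=1$. So the whole content is the recurrent case $|\delta|\le1$, in which (Theorem~\ref{rt0}) the walk returns to $0$ infinitely often; there it suffices to prove $X_n/n\to0$ $P_0$-a.s., so that $v=0$ works.

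Assume $|\delta|\le1$. As in the proof of Lemma~\ref{neww} the walk then visits every site infinitely often, so $T_\ell:=\inf\{n:X_n=\ell\}<\infty$ $P_0$-a.s.\ for every $\ell$. By reflection it is enough to prove $\limsup_n X_n/n\le0$, i.e.\ $M_n:=\max_{0\le k\le n}X_k$ satisfies $M_n/n\to0$, i.e.\ $T_\ell/\ell\to\infty$ as $\ell\to\infty$. I would bound $T_\ell$ below by counting edge crossings: each edge $(m,m+1)$ with $0\le m\le\ell-1$ is crossed exactly once more upward than downward before time $T_\ell$, so with $a_m$ the number of such upcrossings,
\[
T_\ell\ \ge\ \sum_{m=0}^{\ell-1}(2a_m-1)\ =\ 2\sum_{m=0}^{\ell-1}a_m-\ell .
\]
Reading $(a_{\ell-1},a_{\ell-2},\dots,a_0)$ from the top, one checks that $a_{\ell-1}=1$ and $a_{m-1}=1+F^{(m)}_{a_m}$, the superscript referring to the cookies at site $m$, which are consumed for the first time during $[0,T_\ell]$; by the i.i.d.\ assumption (\ref{H0}) this gives $(a_{\ell-1},\dots,a_0)\stackrel{d}{=}(V_0+1,\dots,V_{\ell-1}+1)$, with $(V_k)$ the process of (\ref{VV}). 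Hence $T_\ell/\ell\to\infty$ will follow once
\begin{equation*}
\frac1\ell\sum_{k=0}^{\ell-1}V_k\ \longrightarrow\ \infty\qquad P_0\text{-a.s.}
\end{equation*}

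Now $(V_k)$ has critical offspring (\geo, mean $1$) together with a bounded migration coming from the at most $M$ biased cookies per site; relating it to a $(\geo,\nu)$-branching process as in Lemma~\ref{deco1}, one computes that its migration parameter is $\theta_V=1-\delta$, so $E_0[V_{k+1}\mid V_k]=V_k+(1-\delta)$ whenever $V_k\ge M$. If $\delta<0$ then $\theta_V>1$, so by Corollary~\ref{AA} the process does not return to $0$ and in fact $V_k\to\infty$, making the displayed limit trivial. If $0\le\delta\le1$ then $\theta_V\in[0,1]$, so by Corollary~\ref{AA} the process returns to $0$ $P_0$-a.s.\ but with infinite expected total progeny between consecutive returns, $v_\infty=\infty$. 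Writing $0=R_0<R_1<\dots$ for these returns (genuine regeneration times, so the cycles are i.i.d.), the cycle lengths $L_i=R_i-R_{i-1}$ and rewards $Y_i=\sum_{R_{i-1}\le k<R_i}V_k$ satisfy $E_0[Y_i]=v_\infty=\infty$ and $Y_i\ge L_i-1$. Since $\sum_{k<R_m}V_k=\sum_{i\le m}Y_i$ and $R_m=\sum_{i\le m}L_i$, and since averages of nonnegative i.i.d.\ summands with infinite mean diverge a.s., one has $\frac1m\sum_{i\le m}Y_i\to\infty$; together with a quantitative comparison $\sum_{i\le m}Y_i\gg R_m$ and the monotonicity of $\ell\mapsto\sum_{k<\ell}V_k$, this gives the claim (alternatively one may prove a small-ball estimate $P_0[\sum_{k<\ell}V_k\le A\ell]\le C_A\ell^{-c_A}$ and invoke Borel--Cantelli, which gives convergence for all $\ell$ directly).

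The hard part is exactly that last comparison. When $\delta\in(0,1]$ the chain $(V_k)$ is null recurrent, so $L_i$ itself has infinite mean and $R_m$ and $\sum_{i\le m}Y_i$ both grow superlinearly; a bare renewal--reward ratio is then an indeterminate $\infty/\infty$, and one must quantify how fast the mass $Y_i$ accumulated during an excursion of the (sub)critical branching process with migration grows relative to the excursion length $L_i$ — equivalently, extract sharp enough tail information on the extinction time and on $v_n$ (of the type in Theorem~\ref{A}(iii)--(iv), extended to $\theta\in(-1,1)$) to compare the partial sums. Everything else — the transient case via (\ref{v}), the reduction $M_n/n\to0\Leftrightarrow T_\ell/\ell\to\infty$, and the crossing identity relating $T_\ell$ to $(V_k)$ — is routine given the results already established in the paper.
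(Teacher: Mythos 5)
Your transient case is fine and is essentially what the paper does via the renewal structure (eq.\ (\ref{v})); the substance is the recurrent case, and there your route is genuinely different from the paper's but, as you yourself say, not closed.

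The paper's own proof of Proposition~\ref{lln} avoids the branching process entirely in the recurrent regime: it cites the inequalities $\limsup_n X_n/n\le 1/u_+$ and (when $u_+<\infty$) $\liminf_n X_n/n\ge 1/u_+$ from the proof of \cite[Theorem 13]{Ze05}, together with the reflected statements for $u_-$, and observes that recurrence ($\sup_nX_n=\infty$ and $\inf_nX_n=-\infty$ a.s.) forces $u_+=u_-=\infty$, whence $X_n/n\to0$. No tail estimates for the extinction time or total progeny are needed.

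Your approach --- reduce to $T_\ell/\ell\to\infty$, identify the upcrossing counts $(a_{\ell-1},\dots,a_0)$ with $(V_0+1,\dots,V_{\ell-1}+1)$ via the backward branching process $a_{m-1}=1+F^{(m)}_{a_m}$, and try to show $\ell^{-1}\sum_{k<\ell}V_k\to\infty$ --- is correctly set up (the crossing identity, the identification $\theta_V=1-\delta$, and $E_0[V_{k+1}\mid V_k]=V_k+1-\delta$ for $V_k\ge M$ all check out). But the final step has a real gap, and you name it yourself. For $\delta\in[0,1]$ one has $\theta_V\in[0,1]$, and the chain is null recurrent: both the cycle length $L_i$ and the reward $Y_i$ have infinite mean. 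The crude bound $Y_i\ge L_i-1$ only gives $\liminf_m \big(\sum_{i\le m}Y_i\big)/R_m\ge 1$, not divergence, so the renewal--reward ratio is genuinely $\infty/\infty$. To close it one needs sharp tail asymptotics for $N^{(V)}$ and for $\sum_{k<N^{(V)}}V_k$ in the range $\theta\in(-1,1)$, or the small-ball estimate $P_0\big[\sum_{k<\ell}V_k\le A\ell\big]\le C_A\ell^{-c_A}$ you mention in parentheses; neither is among the results quoted in Theorem~\ref{A}, which only covers $\theta>1$, $\theta=1$, $\theta=-1$, $\theta<-1$. A further, smaller issue is that the equality $(a_{\ell-1},\dots,a_0)\stackrel{d}{=}(V_0+1,\dots,V_{\ell-1}+1)$ is only marginal in $\ell$, so even an a.s.\ statement for $(V_k)_k$ would not by itself yield $T_\ell/\ell\to\infty$ a.s.\ for the walk; one needs the Borel--Cantelli/subsequence route through the small-ball estimate. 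In short: correct setup and correct diagnosis, but the key comparison in the null-recurrent regime is left unproved, whereas the paper sidesteps it with the soft $u_\pm$ argument.
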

\begin{proof}
  It can be shown exactly like in the proof of \cite[Theorem 13]{Ze05}
  that if $\sup_{n\ge 0}X_n=\infty$ a.s.\ then
\begin{eqnarray} \label{s1} \limsup_{n\to\infty}\frac{X_n}{n}&\le
  &\frac{1}{u_+}\quad\mbox{ a.s., where }\\ \nonumber
  u_+&:=&\sum_{j\ge 1}P_0[T_{j+1}-T_j\ge j]\in[1,\infty],\quad\mbox{and}\\
  \liminf_{n\to\infty}\frac{X_n}{n}&\ge &\frac{1}{u_+}\quad\mbox{a.s.\
    if $u_+<\infty$ }\label{s2}
\end{eqnarray}
see the last line on p.\ 113 and the first line on p.\ 114 of \cite{Ze05}. 
Similarly, by symmetry, if $\inf_{n\ge 0} X_n=-\infty$ a.s.\ then
\begin{eqnarray}\label{s3}
  \liminf_{n\to\infty}\frac{X_n}{n}&\ge &\frac{-1}{u_-}\quad\mbox{ a.s., where }\\
  u_-&:=&\sum_{j\ge 1}P_0[T_{-j-1}-T_{-j}\ge
  j]\in[1,\infty],\quad\mbox{and }
\nonumber\\
  \limsup_{n\to\infty}\frac{X_n}{n}&\le
  &\frac{-1}{u_-}\quad\mbox{a.s.\ 
if $u_-<\infty$. }\label{s4}
\end{eqnarray}
Now due to Theorem \ref{rt0} there are only three cases: Either the
walk is transient to the right or it is transient to the left or it
is recurrent.  Consider the case of transience to the right.  If
$u_+<\infty$ then $\lim_nX_n/n=1/u_+$ follows directly from (\ref{s1})
and (\ref{s2}).  If $u_+=\infty$ then $\lim_nX_n/n=0$ follows from
(\ref{s1}) and $\inf_n X_n>-\infty$.  Transience to the left is
treated analogously. In the case of recurrence we have a.s.\ both
$\sup_nX_n=\infty$ and $\inf_nX_n=-\infty$. Hence both $u_+$ and $u_-$
are infinite due to (\ref{s2}) and (\ref{s4}), respectively. Therefore, by
(\ref{s1}) and (\ref{s3}), $\lim_nX_n/n=0$.
\end{proof}
\begin{lemma}\label{jem}
  Let  $(Z_k)_{k\ge
    0}$ be a $(\geo,\nu)$-bran\-ching process, where $\nu$ is the
  distribution of $\eta_k:=F^{(k)}_M-M+1$, and recall definitions  (\ref{wZ}),  (\ref{VV}) and (\ref{nv}). Then
\begin{equation}\label{mm}
E_0[\tilde V]<\infty \Longleftrightarrow E_0[\tilde Z]<\infty,\quad
\mbox{where}\quad \tilde V:=\sum_{k\ge 0} \tilde V_k\quad\mbox{and}\quad
\tilde Z:=\sum_{k\ge 0} \tilde Z_k.
\end{equation}
\end{lemma}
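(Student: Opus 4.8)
The plan is to interpose, between $(\tilde V_k)_{k\ge0}$ and $(\tilde Z_k)_{k\ge0}$, an auxiliary chain $(W_k)_{k\ge0}$, just as $(W_k)$ was interposed in Section~\ref{rectra}. Put $W_0:=0$ and $W_{k+1}:=F^{(k)}_{(W_k+1)\vee M}$ for $k\ge0$. Since the sequences $(F^{(k)}_m)_{m\ge0}$, $k\ge0$, are i.i.d.\ under $P_0$, $(W_k)$ is a time-homogeneous Markov chain, and a computation analogous to Lemma~\ref{deco1} (with $F$ in place of $S$ and the extra immigrant reflecting the ``$+1$'' in (\ref{VV})) shows that $Z_k:=W_{k+1}-F^{(k)}_M$ defines a $(\geo,\nu)$-branching process whose migration law $\nu$ is the law of $\eta_k=F^{(k)}_M-M+1$; so we may take $(Z_k)$ to be this process. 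Writing $N^{(W)}:=\inf\{k\ge1\mid W_k=0\}$, $\tilde W_k:=W_k\won_{\{k<N^{(W)}\}}$ and $\tilde W:=\sum_{k\ge0}\tilde W_k$, the lemma reduces to the two equivalences $E_0[\tilde V]<\infty\iff E_0[\tilde W]<\infty$ and $E_0[\tilde W]<\infty\iff E_0[\tilde Z]<\infty$.

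For the second equivalence I would use the identity $W_{k+1}=Z_k+F^{(k)}_M$. Since $Z_0=0$ and $Z_j\ge1$ for $0<j<N^{(Z)}$, and since $W_k=0$ forces $Z_{k-1}=0$, on the event $\{F^{(0)}_M>0\}$ one has $N^{(W)}\ge N^{(Z)}+1$, hence
\[
\tilde W=\sum_{k=0}^{N^{(W)}-2}\bigl(Z_k+F^{(k)}_M\bigr)\ \ge\ \sum_{k=0}^{N^{(Z)}-1}Z_k=\tilde Z ,
\]
while on $\{F^{(0)}_M=0\}$ both sides vanish; thus $\tilde W\ge\tilde Z$, giving $E_0[\tilde W]<\infty\Rightarrow E_0[\tilde Z]<\infty$. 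For the converse, assume $E_0[\tilde Z]<\infty$; then $\theta<-1$ by Corollary~\ref{AA}, $(Z_k)$ returns to $0$ infinitely often, and $E_0[N^{(Z)}]\le E_0[\tilde Z]+1$. Let $0=s_0<s_1=N^{(Z)}<s_2<\dots$ be its return times to $0$. Reading off the definitions, $(W_k)$ first hits $0$ at time $s_J+1$, where $J=\min\{j\ge0\mid F^{(s_j)}_M=0\}$. By the strong Markov property of $(Z_k)$ at each $s_j$, the quantity $F^{(s_j)}_M$ is a fresh copy of $F^{(0)}_M$, so conditionally on $\{J\ge j\}$ it vanishes with probability $\eps:=\EE[\prod_{i=1}^M\om(0,i)]>0$ (positive by (\ref{elli})), and $J$ is stochastically dominated by a geometric variable. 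Bounding $\tilde W\le\sum_{j=1}^{J}(\text{area of the }j\text{-th }Z\text{-excursion})+\sum_{k=0}^{s_J}F^{(k)}_M$ and summing excursion by excursion — each $Z$-excursion contributing at most $E_0[\tilde Z]$ in expected area, at most $E_0[\tilde Z]+1$ in expected length, and $E_0[F^{(0)}_M]<\infty$ — yields, by a Wald-type estimate, $E_0[\tilde W]\le c\bigl(1+E_0[\tilde Z]\bigr)<\infty$.

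For the first equivalence, note that $(V_k)$ and $(W_k)$ have identical transition probabilities from every state $i\ge M-1$ (there $(i+1)\vee M=i+1$), differing only on the finite set $\{0,1,\dots,M-2\}$, and that from each $j\in\{1,\dots,M-2\}$ both jump to $0$ with probability at least $\eps$, since $P_0[F^{(k)}_{j+1}=0]\ge P_0[F^{(k)}_M=0]=\eps$. Running both chains off the same coins, induction using the monotonicity of $m\mapsto F^{(k)}_m$ and $(i+1)\vee M\ge i+1$ gives $V_k\le W_k$ for all $k$, hence $N^{(V)}\le N^{(W)}$ and $\tilde V\le\tilde W$; this settles $E_0[\tilde W]<\infty\Rightarrow E_0[\tilde V]<\infty$. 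For the reverse implication, decompose an excursion of $(W_k)$ from $0$ at its successive visits to $\{0,\dots,M-2\}$: their number per excursion is stochastically dominated by a geometric variable (each visit is followed, one step later, by absorption at $0$ with probability $\eps$), they contribute at most $M-2$ apiece to $\tilde W$, and between consecutive visits $(W_k)$ runs with the common kernel on $B:=\{M-1,M,\dots\}$, which it enters at a height distributed as $F^{(0)}_M$ conditioned to lie in $B$. It then remains to bound the total expected area of these $B$-sojourns: $E_0[\tilde V]<\infty$ forces the $B$-killed chain to have finite expected excursion area from every height (were it infinite at one height, the positive probability of reaching that height from wherever $(V_k)$ itself enters $B$ would force $E_0[\tilde V]=\infty$), and in that case this expected area grows at most polynomially in the starting height — a standard estimate for a negatively drifting chain with light-tailed increments, or, since the $B$-chain is the branching process started from $n$ particles, a consequence of the finiteness of $v_\infty$ in Theorem~\ref{A}(iv). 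As $F^{(0)}_M$ has exponential tails, the $B$-sojourns contribute a finite expected amount, so $E_0[\tilde W]<\infty$. Chaining the two equivalences proves the lemma.

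I expect the reverse implication of the last equivalence, $E_0[\tilde V]<\infty\Rightarrow E_0[\tilde W]<\infty$, to be the main obstacle: the monotone coupling $V_k\le W_k$ only runs the ``wrong'' way, so one is forced through the excursion decomposition, and the genuinely delicate point there is the control of the expected area of an excursion of the common part of the chain as a function of its random (but light-tailed) entrance height into $B$.
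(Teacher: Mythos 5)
Your scaffolding is exactly the paper's: interpose the chain $(W_k)$ with $W_{k+1}=F^{(k)}_{(W_k+1)\vee M}$, decompose it as $W_{k+1}=Z'_k+F^{(k)}_M$ (the paper's Lemma~\ref{deco2}), and split the claim into $E_0[\tilde V]<\infty\iff E_0[\tilde W]<\infty$ and $E_0[\tilde W]<\infty\iff E_0[\tilde Z]<\infty$, the easy halves coming from the pathwise dominations $\tilde V\le\tilde W$ and $\tilde Z\le\tilde W$. For the two hard halves, though, the paper never touches excursions. For $E_0[\tilde Z]<\infty\Rightarrow E_0[\tilde W]<\infty$ it observes that $(Z'_k)$ is then positive recurrent with stationary law $\pi'$ of finite mean, that $\pi'*\mathrm{law}(F^{(0)}_M)$ is stationary for $(W_k)$ by the decomposition and independence, also with finite mean, and then converts back via the cycle formula (its display (\ref{wir})). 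Your geometric-$J$ decomposition can be repaired --- not by literal Wald, since $F^{(s_{j-1})}_M$ feeds into the $j$-th $Z$-excursion, but by noting the $j$-th excursion is independent of $\{J\ge j-1\}$ so $E_0[A_j\won_{\{J\ge j\}}]\le P_0[J\ge j-1]\,E_0[\tilde Z]$ --- but it is heavier than the paper's one-line convolution remark. For $E_0[\tilde V]<\infty\Rightarrow E_0[\tilde W]<\infty$ the paper proves a clean general lemma (Lemma~\ref{fini}): overwriting a finite set of rows of a positive recurrent, strictly positive kernel with one of the unmodified rows keeps the chain positive recurrent and multiplies the stationary law by at most a constant; applied with $j=M-1$, $J=\{0,\dots,M-2\}$ this turns $(V_k)$ into $(W_k)$, and the cycle formula again finishes.

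The genuine gap is in your version of that last implication --- the one you yourself flag as the main obstacle. Your decomposition of a $W$-excursion at its visits to $\{0,\dots,M-2\}$ requires the expected $B$-sojourn area $g(n)$, as a function of the entrance height $n$, to grow at most polynomially, but neither justification you offer is available at that point. Appealing to Theorem~\ref{A}(iv) (or the finiteness of $v_\infty$) presupposes $\theta<-1$, which by Corollary~\ref{AA} is \emph{equivalent} to $E_0[\tilde Z]<\infty$ and hence is precisely the conclusion the chain of implications is supposed to deliver --- at that stage you know only $E_0[\tilde V]<\infty$ and have not yet crossed over to $\tilde Z$, so the appeal is circular. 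The alternative ``standard estimate for a negatively drifting chain with light-tailed increments'' presupposes a negative drift bound that you likewise have not extracted from $E_0[\tilde V]<\infty$ alone. What $E_0[\tilde V]<\infty$ gives you directly is only positive recurrence of $(V_k)$, and the paper's Lemma~\ref{fini} is exactly the device that turns this qualitative hypothesis into $E_0[\tilde W]<\infty$ with no quantitative drift or tail input. To close the gap you should either prove the polynomial bound on $g(n)$ from $E_0[\tilde V]<\infty$ alone (nontrivial), or replace the $B$-sojourn estimate by a comparison of the two chains' cycle/invariant measures in the spirit of Lemma~\ref{fini}, which is what the paper does.
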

\begin{proof} 
  As an intermediate step we first consider the auxiliary Markov chain
  $(W_k)_{k\ge 0}$ defined by
\begin{equation}\label{WW}
 W_0:=0,\quad W_{k+1}:=F^{(k)}_{(W_k+1)\vee M}.
\end{equation}
This is a branching process with migration in the following sense:
At each step, it exhibits two types of behavior: 1) if $W_k\ge M-1$ then one
particle immigrates and then all $W_k+1$ particles reproduce; 2) if
$W_k<M-1$ then $M-W_k$ particles immigrate and then all $M$ particles reproduce.

We shall first establish the equivalence
\begin{equation}\label{mm2}
E_0[\tilde V]<\infty  \Longleftrightarrow E_0[\tilde W]<\infty.
\end{equation}
where, as usual,
\begin{equation}
  \label{nw}
N^{(W)}:=\inf\{k\ge 1\mid W_k=0\},\quad 
  \wW_k:=W_k\won_{\{k<N^{(W)}\}},\ \text{and}\quad
\tilde W:=\sum_{k\ge 0} \tilde W_k.
\end{equation}
Comparing definitions (\ref{VV}) and (\ref{WW}) we see that $\tilde
V_k\le \tilde W_k$ for all $k$, which yields the implication
$\Leftarrow$ in (\ref{mm2}).  For the reverse implication, assume that
$E_0[\tilde V]$ is finite. Since $N^{(V)}\le\tilde V+1$ this implies
that $(V_k)_{k\ge 0}$ is positive recurrent.
The following
lemma, whose proof is postponed, will help us to compare $(V_k)_{k\ge
  0}$ and $(W_k)_{k\ge 0}$.
\begin{lemma}\label{fini} Let $K$ be the transition matrix of a
  positive recurrent Markov chain with state space $\N_0$ 
  and invariant distribution $\pi$. Assume also that all entries of $K$
  are strictly positive.  Fix a state $j\in \N_0$ and a finite set
  $J\subset \N_0\setminus \{j\}$. Modify a finite number of rows of
  $K$ by setting
  \begin{equation*}
    \wK(i,\cdot):=
    \begin{cases}
     K(i,\cdot),&\text{if }i\not\in J;\\K(j,\cdot),&\text{if }i\in J. 
    \end{cases}
  \end{equation*}
 Then a Markov chain with the transition matrix $\wK$ is also
  positive recurrent and its unique invariant probability distribution
  $\wpi$ satisfies $\wpi(n)\le \con{cc}\pi(n)$ for
all $n\in \N_0$.
\end{lemma}
If we let $K$ be the transition matrix of
the Markov chain $(V_k)_{k\ge 0}$ and set $j=M-1$ and
$J=\{0,1,\ldots,M-2\}$ then $\wK$ defined in Lemma \ref{fini} is the
transition matrix of $(W_k)_{k\ge 0}$. Moreover, all entries of this
$K$ are strictly positive due to (\ref{elli}).  Consequently, we may
apply Lemma \ref{fini} and get that $(W_k)_{k\ge 0}$ is positive
recurrent and its invariant probability distribution $\wpi$ is bounded
above by a multiple $c_{\ref{cc}}\pi$ of the invariant probability
distribution $\pi$ of $(V_k)_{k\ge 0}$. By Theorem 5.4.3 of
\cite{durr}, $\pi$ and $\wpi$ can be represented as
$\pi=\rho/E_0[N^{(V)}]$ and $\wpi=\wrho/E_0[N^{(W)}]$, where for $s\in \N_0$,
\[ \rho(s):=E_0\Bigg[\sum_{k=0}^{ N^{(V)}-1}\won_{\{
    V_k=s\}}\Bigg]\quad\mbox{and}\quad 
\wrho(s):=E_0\Bigg[\sum_{k=0}^{ N^{(W)}-1}\won_{\{
    W_k=s\}}\Bigg].
\]
Therefore, also $\wrho\le\con{til}\rho$. However,
\begin{equation}\label{wir}
E_0[ 
\tilde W]= E_0\Bigg[\sum_{k= 0}^{N^{(W)}-1}\sum_{s\ge 0}s\won_{\{W_k=s\}}\Bigg]
\ =\ \sum_{s\ge 0}s\,\wrho(s)
\end{equation}
and, similarly, $E_0[\tilde V] = \sum_{s}s\rho(s).$ Consequently,
$E_0[\tilde W]\le c_{\ref{til}}E_0[\tilde V]$. This gives the
implication $\Rightarrow$ in (\ref{mm2}). Next, we show that
\begin{equation}\label{mm3}
  E_0[ \tilde W]<\infty \Longleftrightarrow E_0[\tilde Z]<\infty.
\end{equation}
As in Lemma \ref{deco1} we decompose the process
$(W_k)_{k\ge 0}$ into two components.
\begin{lemma}\label{deco2}
For $k\ge 0$ let
$Z'_k:=W_{k+1}-F^{(k)}_M.$
Then $(Z'_k)_{k\ge 0}$ is a $(\geo,\nu)$-branching process, where
$\nu$ is the common distribution of $\eta_k:=F_M^{(k)}-M+1$ under
$P_1$.
\end{lemma}
The proof of Lemma \ref{deco2} is almost identical to the one of Lemma
\ref{deco1} and, thus, is omitted.  

Since $F^{(k)}_M\ge 0$, we immediately obtain $\tilde Z'_k\le \tilde
W_{k+1}$, where $\tilde Z'_k$ is defined by replacing $W$ in (\ref{nw}) by $Z'$.
By Lemma \ref{deco2}, $(\wZ_k)_k$ and $(\wZ'_k)_k$ have the same distribution. Therefore $E_0[\wZ]=E_0[\wZ']\le E_0[\tilde W]$, which
yields the implication $\Rightarrow$ in (\ref{mm3}). For the opposite
direction assume that $E_0[\wZ]<\infty$. Then, as in the proof of
(\ref{mm2}), $(Z'_k)_{k\ge 0}$ is positive recurrent and, by the
equivalent of (\ref{wir}), its invariant distribution, say $\pi'$, has
a finite mean.  Since $Z'_k$ and $F^{(k)}_M$ are independent, it
follows from Lemma \ref{deco2} that the convolution of $\pi'$ and the
distribution of $F^{(k)}_M$ is invariant for $(W_k)_{k\ge 0}$.  This
convolution has a finite mean as well, which implies, as in (\ref{wir}), that
$E_0[\tilde W]$ is finite as well.  This concludes the proof of 
(\ref{mm3}). The statement of the lemma now follows from (\ref{mm2}) and (\ref{mm3}). 
\end{proof}
\begin{proof}[Proof of Lemma \ref{fini}]
  It suffices to consider the case in which $J$ has only one element, i.e.\
  $J=\{i\}$ for some $i\ne j$. The full statement then follows by induction, changing
  one row at a time.
  Let $(\zeta_k)_{k\ge 0}$
  and $(\wzeta_k)_{k\ge 0}$ be Markov chains with transition matrices
  $K$ and $\wK$, respectively.  Their initial point will be denoted by
  a subscript of $P$ and $E$.  Since all the entries of $\wK$ are
  strictly positive, $\wK$ is irreducible.  It is recurrent, since its
  state $i$ is recurrent.  Indeed, $P_i[\exists k\ge 1:\wzeta_k=i]=
  P_j[\exists k\ge 1:\wzeta_k=i]$ because of $\wK(i,\cdot)=K(j,\cdot)=
  \wK(j,\cdot)$.  Moreover, since $(\zeta_k)_{k\ge 0}$ and
  $(\wzeta_k)_{k\ge 0}$ are indistinguishable as long as they do not
  touch $i$, i.e.\ since $K(s,\cdot)=\wK(s,\cdot)$ for all $s\ne i$,
  we can switch from the process $(\wzeta_k)_{k\ge 0}$ to
  $(\zeta_k)_{k\ge 0}$ and obtain that $P_i[\exists k\ge
  1:\wzeta_k=i]=P_j[\exists k\ge 1:\zeta_k=i]$, which is equal to 1
  because $(\zeta_k)_{k\ge 0}$ is recurrent.

  Similarly, one can show that $(\wzeta_k)_{k\ge 0}$ is also positive
  recurrent.  Define the hitting time $\si:=\inf\{k\ge
  1\mid \zeta_{k}=i\}$ for $(\zeta_k)_{k\ge 0}$ and analogously $\wsi$
  for $(\wzeta_k)_{k\ge 0}$.  Then, again by \cite[Theorem
  5.4.3]{durr}, $\rho$ and $\wrho$, defined by
\[
  \rho(s):=E_i\left[\sum_{k=0}^{\si-1}\won_{\{
      \zeta_k=s\}}\right]\quad\mbox{and}\quad\wrho(s):=
  E_i\left[\sum_{k=0}^{\wsi-1}\won_{\{
      \wzeta_k=s\}}\right],\qquad s\in \N_0,
\]
are invariant measures for $K$ and $\wK$, respectively. Using the
relations between $K$ and $\wK$ as above, we have for all $s\in \N_0$,
\begin{equation}\label{na}\wrho(s)=E_j\left[\sum_{k=0}^{\wsi-1}\won_{\{
    \wzeta_k=s\}}\right]\ =\ E_j\left[\sum_{k=0}^{\si-1}\won_{\{\zeta_k=s\}}\right].
\end{equation}
On the other hand, for all $s\in \N_0$,
\[\rho(s)\ge E_i\left[\sum_{k=1}^{\si-1}\won_{\{\zeta_1=j,\
    \zeta_k=s\}}\right]\ =\ K(i,j)\ E_j\left[\sum_{k=0}^{\si-1}\won_{\{
    \zeta_k=s\}}\right]\stackrel{(\ref{na})}{=}K(i,j)\wrho(s).\]
Since $(\zeta_k)_{k\ge 0}$ is positive recurrent, $\rho$'s total mass,
$E_i[\si]$, is finite.  Consequently, by the above and since
$K(i,j)>0$, $\wrho$'s total mass, $E_i[\wsi]$, is finite as well.
Therefore, $(\wzeta_k)_{k\ge 0}$ is positive recurrent and its
invariant measure $\wpi$ satisfies $\wpi\le c_{\ref{cc}}\pi$ with
$c_{\ref{cc}}:=E_i[\si]/(E_i[\wsi]K(i,j))$.
\end{proof}
The following lemma is the counterpart of Lemma \ref{th}. 
\begin{lemma}\label{th2}
 Let $\nu$ be the distribution of $F_M^{(0)}-M+1$ under $P_0$. Then
  $\theta$ defined in (\ref{theta}) for the $(\geo,\nu)$-branching
  process is equal to $1-\delta$.
\end{lemma}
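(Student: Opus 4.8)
The plan is to mimic the proof of Lemma~\ref{th}, reducing everything to a single moment computation. Since $\mu=\geo$ has $b=f''(1)/2=1$, (\ref{theta}) gives $\theta=\lambda$, and by (\ref{defla}) the mean of the migration law $\nu$ in question is
\[
\lambda\ =\ E_0\big[F_M^{(0)}-M+1\big]\ =\ E_0\big[F_M^{(0)}\big]-M+1 .
\]
So it suffices to prove $E_0\big[F_M^{(0)}\big]=M-\delta$; then $\theta=\lambda=1-\delta$.

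First I would note that this identity is just (\ref{hello}) ``reflected''. Replacing $\om(z,i)$ by $1-\om(z,i)$ for all $z$ and $i\le M$ leaves assumptions (\ref{H0}) and (\ref{elli}) intact, interchanges ``success'' and ``failure'' in all sequences $\big(Y_i^{(k)}\big)_{i\ge 1}$, and hence turns $F_M^{(0)}$ into $S_M^{(0)}$ in distribution while turning $\delta$ into $-\delta$ by (\ref{D}). Applying (\ref{hello}) in the reflected environment therefore yields $E_0\big[F_M^{(0)}\big]-M=-\delta$. Alternatively, one can repeat the computation of Lemma~\ref{th} with successes and failures exchanged: with $G:=\#\{1\le i\le M\mid Y_i^{(0)}=1\}$ the number of successes among the first $M$ trials, the quantity $F_M^{(0)}-(M-G)$ is the number of failures strictly after the $M$-th trial and before the $M$-th success, which given $G$ is the $(M-G)$-fold convolution of \geo, so $E_0\big[F_M^{(0)}-(M-G)\big]=E_0[M-G]$; subtracting $E_0[G]=\sum_{i=1}^M\EE[\om(0,i)]$ from both sides gives $E_0\big[F_M^{(0)}\big]-M=M-2E_0[G]=-\delta$.

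There is no genuine obstacle here, as every step is an elementary expectation. The only points that need care are the ``$+1$'' appearing in the migration variable $F_M^{(k)}-M+1$ (which is exactly the single immigrant built into definition (\ref{VV})), and the sign reversal $\delta\mapsto-\delta$ caused by the reflection, which is precisely what produces the $1-\delta$ in the statement rather than $\delta$.
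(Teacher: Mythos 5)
Your proposal is correct and matches the paper's proof essentially verbatim: the paper's argument is precisely your ``reflection'' step, namely switching successes and failures and replacing $\om(x,i)$ by $1-\om(x,i)$ in (\ref{hello}) to get $E_0[F_M^{(0)}]-M=-\delta$, and then using $b=1$. Your alternative direct computation with $G$ is simply an explicit unpacking of that same symmetry, so it is not a genuinely different route.
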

\begin{proof} 
As in the proof of Lemma \ref{th}, $\theta=E_0[F_{M}^{(0)}]-M+1$ since $b=1$.
Switching failures and successes in (\ref{hello}) and replacing $\om(x,i)$  
  by $1-\om(x,i)$ yields 
  $E_0[F_{M}^{(0)}]-M=-\delta$.
\end{proof}
\begin{proof}[Proof of Theorem \ref{v0}]
  The first statement of the theorem, the existence of the velocity $v$,
  is just Proposition \ref{lln}, or (\ref{v}) in the transient case.
  If $\delta\in[-1,1]$ then the walk is recurrent by Theorem \ref{rt0}
  and therefore $v=0$.

Now let $|\delta|>1$. Without loss of generality we may assume
$\delta>1$. Then the walk is transient to the right by Theorem
\ref{rt0}.  By (\ref{first}), $v>0$ iff $E_0[\tau_2-\tau_1]<\infty$.
By Lemma \ref{mom} with $p=1$ this is the case iff
$E_0\big[\sum_{k\ge 0}D_k\big]<\infty$. By Lemma \ref{ffs} this holds iff
$E_0\big[\sum_{k\ge 0}\tilde V_k\big]<\infty$. Due to Lemma
\ref{jem} this is true iff $E_0\big[\sum_{k\ge 0}\tilde Z_k\big]$ is
finite. By the second statement in Corollary \ref{AA} this holds iff
$\theta<-1$.  
Thus, by Lemma \ref{th2},
$v>0$ iff $\delta>2$.
\end{proof}
\section{Central limit theorem}\label{sclt}
\begin{lemma}\label{lclt}
  Let $(\wV_k)_{k\ge 0}$ be defined by (\ref{nv}). Then $\delta>4$
  implies that the random variable
  $\tilde{V}:=\sum_{k\ge 0}\tilde V_k$ has a finite second moment. 
\end{lemma}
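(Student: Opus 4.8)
The plan is to reduce the statement about the second moment of $\tilde V$ to a second-moment statement about the stopped $(\geo,\nu)$-branching process $(\wZ_k)_{k\ge 0}$ from Lemma \ref{jem}, and then invoke Theorem \ref{A}(iv). Recall from Lemma \ref{th2} that for this $\nu$ (the distribution of $F_M^{(0)}-M+1$) we have $\theta=1-\delta$, so $\delta>4$ is precisely $\theta<-3$, i.e. $1+|\theta|>4$. The first step is therefore to upgrade the equivalence in Lemma \ref{jem} from first moments to second moments: I expect that essentially the same comparison arguments go through, namely $\tilde V_k\le\tilde W_k$ gives $E_0[\tilde V^2]\le E_0[\tilde W^2]$ trivially in one direction, and for the other direction one again uses that $(V_k)$ and $(W_k)$ differ only in finitely many rows, so by Lemma \ref{fini} (and the Kac-type representation $\pi=\rho/E_0[N^{(V)}]$) the occupation measures are comparable; then $E_0[\tilde W^2]$ and $E_0[\tilde V^2]$ can both be controlled in terms of the tails of the respective occupation times. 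Likewise the decomposition $W_{k+1}=Z'_k+F^{(k)}_M$ of Lemma \ref{deco2}, together with the independence of $Z'_k$ and $F^{(k)}_M$ and the fact that $F^{(0)}_M$ has all moments (it is a sum of $M$ i.i.d.\ Geom$(1/2)$ variables, hence has exponential tails), lets one pass between $E_0[\tilde W^2]<\infty$ and $E_0[\tilde Z^2]<\infty$.

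The second step is the genuinely new input: showing $E_0[\tilde Z^2]<\infty$ when $\theta<-3$. Here I would use Theorem \ref{A}(iv), which under $\theta<-1$ and the moment hypothesis $\sum_k k^{1+|\theta|}\mu(\{k\})<\infty$ gives $P[\wZ_n>0]=u_n\sim c\,n^{-(1+|\theta|)}$. For $\mu=\geo$ the offspring distribution has geometric tails, so the hypothesis $\sum_k k^{1+|\theta|}\mu(\{k\})<\infty$ holds for every $\theta$, and in particular $u_n$ decays like $n^{-(1+|\theta|)}$ with $1+|\theta|>4$. Since $\tilde Z=\sum_{k\ge 0}\tilde Z_k$ and $\{\wZ$ still alive at time $k\}$ has probability $u_k$, I would bound $E_0[\tilde Z^2]$ by splitting according to the extinction time $N^{(Z)}$: on $\{N^{(Z)}=n\}$ one has $\tilde Z\le\sum_{k<n}\tilde Z_k$, and using that conditionally the generation sizes of a (sub)critical branching process with bounded-below migration have, say, at most polynomially growing conditional moments in $n$, one gets $E_0[\tilde Z^2\won_{\{N^{(Z)}=n\}}]\le \mathrm{poly}(n)\,P[N^{(Z)}\ge n]$; summing a polynomial against $n^{-(1+|\theta|)}$ with $1+|\theta|>4$ converges provided the polynomial has degree $\le 2$, which is exactly the regime $\delta>4$ buys us. (A cleaner route, if available from the cited branching-process literature, is to quote directly that $E[\tilde Z^2]<\infty$ iff $\theta<-3$, analogous to the $v_\infty<\infty$ iff $\theta<-1$ statement; but absent that I would run the tail estimate just sketched.)

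The final step is bookkeeping: chain the equivalences $E_0[\tilde V^2]<\infty\iff E_0[\tilde W^2]<\infty\iff E_0[\tilde Z^2]<\infty$ from Step 1 with the conclusion of Step 2, and translate $\theta<-1-2=-3$ back via Lemma \ref{th2} into $\delta>4$. The main obstacle I anticipate is Step 2, specifically getting a clean enough bound on the conditional second moment $E_0[(\sum_{k<n}\tilde Z_k)^2\mid N^{(Z)}=n]$: one needs that conditioning a (sub)critical branching process with migration to survive to time $n$ does not inflate its size beyond a polynomial factor in $n$, which requires a little care with the migration term (the $\eta_k$ are only bounded below, not above, so one must control the upper tail of the immigration), though the geometric/exponential tails of both $\mu=\geo$ and of $F^{(0)}_M$ make this manageable. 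The first step is routine once one notices that every auxiliary random variable involved ($F^{(k)}_M$, single-generation offspring counts) has exponential tails, so no moment is lost in the reductions.
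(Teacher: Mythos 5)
The plan has a genuine gap in Step 1. The occupation-measure/Kac argument that underlies Lemma \ref{jem} works because $E_0[\tilde W]=\sum_s s\,\wrho(s)$ is a \emph{linear} functional of the occupation measure $\wrho(s)=E_0[\sum_{k<N^{(W)}}\won_{\{W_k=s\}}]$, and comparing $\rho$ with $\wrho$ via Lemma \ref{fini} is exactly what is needed. But $E_0[\tilde W^2]=E_0[(\sum_{k<N^{(W)}}W_k)^2]$ is not $\sum_s s^2\wrho(s)$: the latter equals $E_0[\sum_{k<N^{(W)}}W_k^2]$, and the cross terms $E_0[W_jW_k\won_{\{j,k<N^{(W)}\}}]$ are simply not functionals of the occupation measure. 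So ``essentially the same comparison arguments go through'' is not true: the Lemma-\ref{fini}-based comparison gives you no handle on $E_0[\tilde W^2]$, and the same problem hits the reduction from $\tilde W$ to $\tilde Z$. Your Step 2, while plausible in spirit, also has a soft spot: you would need to bound conditional moments $E_0[\tilde Z^2\mid N^{(Z)}=n]$ by a polynomial in $n$, and conditioning on the rare event of long survival can in principle inflate moments in ways that require a separate argument.

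The paper sidesteps both issues. It does not attempt a second-moment equivalence at all: it only uses the trivial one-sided pointwise bound $V_k\le W_k$, hence $E_0[\tilde V^2]\le E_0[\tilde W^2]$, and then proves $E_0[\tilde W^2]<\infty$ \emph{directly}. The key move is Minkowski's inequality, $(E_0[\tilde W^2])^{1/2}\le\sum_{k}(E_0[W_k^2\won_{\{N^{(W)}>k\}}])^{1/2}$, which disposes of the cross-term problem, followed by the pointwise decomposition $W_k^2\le 2(Z_{k-1}^2+(F_M^{(k-1)})^2)$ and then H\"older to split $E_0[Z_{k-1}^2\won_{\{N^{(W)}>k\}}]\le (E_0[Z_{k-1}^{2\alpha}])^{1/\alpha}(P_0[N^{(W)}>k])^{1/\alpha'}$; this avoids conditional moments altogether. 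The remaining ingredients are a polynomial growth bound $E_0[Z_k^\ell]\le c(\ell)k^\ell$ (proved by induction via a combinatorial estimate, Lemma \ref{multi}) and a tail bound $P_0[N^{(W)}>k]\le c\,k^{-\delta+\epsilon}$ obtained by writing $N^{(W)}$ as a geometric sum of i.i.d.\ excursion lengths of the underlying $(\geo,\nu)$-process and applying Theorem~\ref{A}(iv) to a \emph{single} excursion. Your instinct to use Theorem~\ref{A}(iv) with $\theta=1-\delta<-3$ is on target, and your overall ``reduce $V\to W\to Z$, then invoke tail decay'' architecture mirrors the paper's, but the specific mechanism you propose for the reduction (invariant-measure comparison) does not transfer from first to second moments, and the mechanism you propose for the tail estimate (conditioning on the extinction time) requires additional justification that the paper's Minkowski+H\"older route avoids.
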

\begin{proof}
  Let $(W_k)_{k\ge 0}$ and $\tilde{W}$ be defined by (\ref{WW}) and (\ref{nw}), respectively, using the
  same sequences $(F^{(k)}_i)_{i\ge 0},\ k\in \N_0$,  as for the process
  $(V_k)_{k\ge 0}$.
Since $V_k\le W_k$, we have $ N^{(V)}\le N^{(W)}$ and $E_0[\tilde{V}^2]\le
E_0[\tilde{W}^2]$. We shall prove that the latter is finite.
By Minkowski's inequality we have
\[
\left(E_0[\tilde{W}^2]\right)^{1/2}=
\bigg(E_0\bigg[\bigg(\sum_{k\ge 0}W_k\won_{\{ N^{(W)}>k\}}\bigg)^2\bigg]\bigg)^{1/2}
\le \sum_{k\ge 0} 
\left(E_0\left[W_k^2\won_{\{ N^{(W)}>k\}}\right]\right)^{1/2}.
\]
From Lemma \ref{deco2} we see that $Z_k:=W_{k+1}-F_M^{(k)}$ defines a
$(\geo,\nu)$-branching process, where $\nu$ is the distribution of
$F_M^{(0)}-M+1$.  Therefore, $W^2_k\le 2(Z_{k-1}^2+(F^{(k-1)}_M)^2)$.
Combining this with the fact that $(a+b)^{1/2}\le a^{1/2}+b^{1/2}$ for
all $a,b\ge 0$, we get
\begin{align*}
  \left(E_0[\tilde{W}^2]\right)^{1/2}
&\le \sqrt{2}\,\sum_{k=1}^\infty
  \left(E_0\left[Z_{k-1}^2\won_{\{ N^{(W)}>k\}}\right]+E_0\big[\big(F^{(k-1)}_M\big
      )^2\won_{\{ N^{(W)}>k\}}\big]\right)^{1/2}\\ 
&\le \sqrt{2}\,\sum_{k=1}^\infty\left[
  \Big(E_0\left[Z_{k-1}^2\won_{\{ N^{(W)}>k\}}\right]\Big)^{1/2}+
\left(E_0\big[\big(F^{(k-1)}_M\big
      )^2\won_{\{ N^{(W)}>k\}}\big]\right)^{1/2}\right].
\end{align*}
Applying H\"older's inequality with $1/\alpha+1/\alpha'=1$,
$\alpha>1$, we obtain
\begin{multline}
  \left(E_0[\tilde{W}^2]\right)^{1/2}\le \sqrt{2}\sum_{k=1}^\infty
  \left(E_0\left[Z_{k-1}^{2\alpha}\right]\right)^{1/(2\alpha)}
  \big(P_0[ N^{(W)}>k]\big)^{1/(2\alpha')}\\+
  \sqrt{2}E_0\big[\big(F^{(0)}_M\big)^{2\alpha}\big]^{1/(2\alpha)}\sum_{k=1}^\infty\big
  (P_0[ N^{(W)}>k]\big)^{1/(2\alpha')}.\label{split}
\end{multline}
We are going to show that 
\begin{itemize}
\item [(i)] for every $\epsilon\in (0,\delta-4)$ there is a constant
  $\con{ed}(\epsilon,\delta)$ such that \[P_0[ N^{(W)}>k]\le
  c_{\ref{ed}}k^{-\delta+\epsilon}\quad\text{for
    all $k\in\N$};\]
\item [(ii)] for each $\ell\in\N$ there is a constant
  $\con{ell}(\ell)$ such that $E_0\left[Z_k^\ell\right]\le
  c_{\ref{ell}}(\ell)k^\ell$ for all $k\in\N_0$.
\end{itemize}
Let us assume (i) and (ii) for the moment and see that both series in
the right hand side of (\ref{split}) are finite. Choose
$\alpha'\in(1,\delta/4)$ so that $\alpha=\alpha'/(\alpha'-1)$ is an
integer and let $\epsilon= (\delta-4\alpha')/2$. Then by (i) and (ii)
for all $k\ge 1$,
\[\left(E_0\left[Z_{k-1}^{2\alpha}\right]\right)^{1/(2\alpha)}
\big(P_0[ N^{(W)}>k]\big)^{1/(2\alpha')} \le
\con{ad}(\alpha',\delta)k^{1-(\delta-\epsilon)/(2\alpha')}=c_{\ref{ad}}k^{-\delta/(4\alpha')}.
\]
Since $\delta/(4\alpha')>1$, the first series in the right hand side
of (\ref{split}) converges. It is obvious now that for the same choice
of $\alpha'$ and $\epsilon$ the second series in the right hand side
of (\ref{split}) also converges.  Therefore we only need to prove (i)
and (ii).

{\em Proof of (i).} Observe that $W_k$ is zero if and only if both
$Z_{k-1}$ and $F^{(k-1)}_M$ are equal to zero. Set $N_0:=0$ and consider the
times $ N_i:=\inf\{k> N_{i-1}\,|\,Z_k=0\}$, $i\in \N$, when the process $(Z_k)_{k\ge 1}$ dies
out.
Due to Lemma~\ref{th2} and $\delta>4$  the parameter $\theta$ for
the process $(Z_k)_{k\ge 0}$ satisfies
\begin{equation}\label{tm3}
  \theta=1-\delta<-3.
\end{equation}
In particular, Corollary \ref{AA} implies that the
process $(Z_k)_{k\ge 0}$ is positive recurrent.  Therefore,
all $ N_i$, $i\in\N$, are a.s.\ finite and
$( N_i- N_{i-1})_{i\in\N}$ is i.i.d.. We are interested in the sequence $(F^{( N_i)}_M)_{i\in\N}.$
 Observe that $\big(F^{(n)}_M\big)_{n\ge 0}$ is
i.i.d.\ and, by construction, $F^{(n)}_M$ is independent from ${\cal
  F}_n=\sigma(\{Z_k,k\le n\},\{F^{(j)}_M,j<n\})$.  It is then
straightforward to check that each $F^{( N_i)}_M$ is
independent from ${\cal F}_{ N_i}$, $i\in\N$,
and 
$\big(F^{( N_i)}_M\big)_{i\ge 1}$ is i.i.d.. 
Let
$\kappa:=\inf\big\{i\ge 1\,|\, F^{( N_i)}_M=0\big\}$. Then $\kappa$ has the
geometric distribution on $\N$ with parameter $p:=P[F^{(0)}_M=0]\in(0,1)$.
Therefore, 
$ N^{(W)}= N_{\kappa}=\sum_{i=1}^\kappa ( N_i- N_{i-1})$ and 
\begin{align}
  P_0[ N^{(W)}>k]&=P_0\left[ N_\kappa^{\delta-\eps}>k^{\delta-\eps}\right]\le
  \frac{1}{k^{\delta-\epsilon}}\sum_{m\ge 1}
  E_0\left[ N_m^{\delta-\epsilon}\won_{\{\kappa=m\}}\right]\nonumber\\&\le 
\frac{1}{k^{\delta-\epsilon}}\sum_{m\ge 1}
  \left(E_0\left[ N_m^{(\delta-\epsilon)\beta}\right]\right)^{1/\beta}
  \left(P_0[\kappa=m]\right)^{1/\beta'},\label{kap}
\end{align}
where $\beta\in(1, \delta/(\delta-\epsilon))$, $\beta'=\beta/(\beta-1)$. Denote
$(\delta-\epsilon)\beta$ by $\gamma$. Writing
$ N_m$ as $\sum_{i=1}^m( N_i- N_{i-1})$ and using Minkowski's
inequality we get 
$(E_0[N_m^\gamma])^{1/\gamma}\le   m  (E_0[ N_1^\gamma])^{1/\gamma}.$
Substituting this into (\ref{kap}) we obtain
\begin{equation*}
  P_0[ N^{(W)}>k]\le \frac{\left(E_0\left[
        N_1^\gamma\right]\right)^{1/\beta}} {k^{\delta-\epsilon}}\sum_{m=1}^\infty
  m^{\delta-\epsilon}\left((1-p)^{m-1}p\right)^{1/\beta'}\ \le\  
  \frac{\left(E_0\left[
        N_1^\gamma\right]\right)^{1/\beta}}{k^{\delta-\epsilon}}\, \con{eve}(\eps,\delta).
\end{equation*}
From part  (iv) of Theorem~A and (\ref{tm3}) we know that $P_0[N_1>k]\sim
c_{\ref{four}} k^{-\delta}$.  Therefore, and since $\gamma<\delta$ by assumption, $P_0[N_1^\ga>k]$ is summable in $k$. Consequently,
$E_0\left[ N_1^\gamma\right]=\con{ei}(\epsilon,\delta)<\infty$.
This implies (i).

{\em Proof of (ii).} The proof can be easily done by induction in $\ell$.
The statement is trivial for $\ell=0$. (Here $0^0=1$.)
Assume now $\ell\ge 1$ and that for each
  $j\in\{0,1,2,\dots,\ell-1\}$ there is a constant $c_{\ref{ell}}(j)$ such that
  $E_0\left[Z_k^j\right]\le c_{\ref{ell}}(j)k^j$ for all $k\in \N_0$.  
Using that $(Z_k)_k$ is a $(\geo,\nu)$-branching process  we have for all
  $k\in\N_0$ and $n\in\N$,
  \begin{align}\nonumber
    E_0\left[Z_{k+1}^\ell\right]
&=E_0\left[\left(\xi^{(k+1)}_1+
        \xi^{(k+1)}_2+\dots+\xi^{(k+1)}_{Z_k+\eta_k}\right)^\ell\right]\\
    &=E_0\left[\left(\xi^{(k+1)}_1+ \nonumber
        \dots+\xi^{(k+1)}_{Z_k+\eta_k}\right)^\ell\won_{\{Z_k+\eta_k\le \ell\}}\right]\\
    &\makebox[3cm]{\ }
+\ \sum_{n>\ell}
    E_0\left[\left(\xi^{(k+1)}_1+
        \dots+\xi^{(k+1)}_{Z_k+\eta_k}\right)^\ell\won_{\{Z_k+\eta_k=n\}}\right] 
\nonumber \\
    &\le \label{post} E_0\left[\left(\xi^{(0)}_1+ 
        \dots+\xi^{(0)}_{\ell}\right)^\ell\right]
    \\\label{ser}
    &\makebox[3cm]{\ }+\sum_{n>\ell}
    E_0\left[\left(\xi^{(0)}_1+
        \dots+\xi^{(0)}_{n}\right)^\ell\right]P_0\left[Z_k+\eta_k=n\right].
 \end{align}
  Observe that the expectation in (\ref{post}) is bounded by a constant
  $\con{con}(\ell)$. To control the series in (\ref{ser}) we use the
  following lemma, whose proof is postponed until after the end of the
  present proof.
\begin{lemma}\label{multi}
  Let $(\xi_i)_{i\in\N}$ be non-negative i.i.d.\ random variables such
  that $E[\xi_1]=1$ and $E\left[\xi_1^\ell\right]<\infty$ for some
  positive integer $\ell$. Then there is a constant
  $\con{lll}$ such that for all $n> \ell$,
\[E\left[(\xi_1+\xi_2+\dots+\xi_n)^\ell\right]\le
n^\ell+c_{\ref{lll}}\sum_{m=1}^{\ell-1} n^m. 
\]
\end{lemma}
Applying this lemma to the series in (\ref{ser}) we obtain
\begin{eqnarray*} E_0\left[Z_{k+1}^\ell\right]&\le &
  c_{\ref{con}}+\sum_{n>\ell}\left(n^\ell+c_{\ref{lll}} 
    \sum_{m=1}^{\ell-1}n^m\right)
  P_0\left[Z_k+\eta_k=n\right]\\
  &\le&c_{\ref{con}}+E_0\left[(Z_k+\eta_k)^\ell\right]+c_{\ref{lll}}
  \sum_{m=1}^{\ell-1}
  E_0\left[(Z_k+\eta_k)^m\right]\\
  &\le&E_0\left[Z_k^\ell\right]+\sum_{m=0}^{\ell-1}\con{coco}(m,\ell)E_0[Z_k^m].
\end{eqnarray*}
  By the induction hypothesis we conclude that 
$E_0\left[Z_{k+1}^\ell-Z_k^\ell\right]\le \con{caci}k^{\ell-1}.$
Summation over $k$
implies (ii) and finishes the proof of  Lemma \ref{lclt}.
\end{proof}
\begin{proof}[Proof of Lemma \ref{multi}.]
By expanding and using independence,
\[
E\left[(\xi_1+\xi_2+\dots+\xi_n)^\ell\right]
=\sum_{m=1}^\ell{n \choose m} \sum_{\substack{\ell_1+\ldots+\ell_m=\ell\\1\le \ell_1,
      \ldots, \ell_m}} \dbinom
  {\ell}{\ell_1,\dots,\ell_m}\prod_{j=1}^m
  E\left[\xi_1^{\ell_j}\right].
\]
Since $E[\xi_1]=1$ the summand for $m=\ell$ is
equal to $n!/(n-\ell)!\le n^\ell$.  For the other terms we estimate the factor ${n\choose m}$ 
from above by $n^m$ and define the constant
\[c_{\ref{lll}}:=\max_{1\le m<\ell}\sum_{\substack{\ell_1+\dots+\ell_m=\ell\\1\le \ell_1,
      \ldots, \ell_m}} \dbinom
  {\ell}{\ell_1,\ldots,\ell_m}\prod_{j=1}^m
  E\left[\xi_1^{\ell_j}\right].
\]
\end{proof}
\begin{proof}[Proof of Theorem \ref{clt}]
  Let $|\delta|>4$. By symmetry we may assume without loss of
  generality that $\delta>4$.  Then, by Lemma \ref{lclt},
  $E_0[(\sum_{k\ge 1}\widetilde V_k)^2]<\infty$. By Lemma \ref{ffs} this
  implies $E_0[(\sum_k D_k)^2]<\infty$. Lemma \ref{mom} for  $p=2$
  gives $E_0[(\tau_2-\tau_1)^2]<\infty$.  This implies the claim, as
  indicated in (\ref{second}) and (\ref{si2}).
\end{proof}

\section{Further remarks, a multi-dimensional example, and open questions}\label{mult}
\begin{rem}[Permuting cookies]\label{order} {\rm Note that
    permuting cookies within the cookie piles, i.e.\ replacing
    $(\om(\cdot,i))_{i\ge 1}$ by $\om(\cdot,\pi(i))_{i\ge 1}$, where
    $\pi:\N\to\N$ is a permutation, does not change $\delta$, defined
    in (\ref{D}). Therefore, permuting cookies does not change the
    classification of the walk as described in Theorems \ref{rt0} and
    \ref{v0}.  This fact can also be seen as follows without using the
    results in  Theorem \ref{A} from the literature.

    At first, observe that we may assume without
    loss of generality that $\pi$ is a finite permutation, since all cookies
    in a cookie pile except for a finite number are placebo cookies.  We may
    even assume that all $i>M$ are fixed points for $\pi$.  Indeed,
    otherwise just replace the original $M$ with $\max\{\pi(i)\mid i\le
    M\}$.  To argue, for example, that permuting cookies does not turn a
    walk which is recurrent from the right into one which is not recurrent from 
    the right or vice versa, recall the definition of $S_M^{(k)}$
    in (\ref{sucks})
    and of $\eta_k$ in Lemma \ref{tree2} and denote by $\si^{(k)}_M\ge
    M$ the index of the $M$-th failure in the sequence
    $(Y_i^{(k)})_{i\ge 1}$. Then $\eta_k$ can be written as
\begin{equation}\label{perm}
  \eta_k=\sum_{i=1}^{\si^{(k)}_M}Y_i^{(k)}=\sum_{i=1}^MY_i^{(k)}
+\sum_{i=M+1}^{\si^{(k)}_M}Y_i^{(k)}=
  \sum_{i=1}^MY_{\pi(i)}^{(k)}+\sum_{i=M+1}^{\si^{(k)}_M}Y_{\pi(i)}^{(k)},
\end{equation} 
where we used in the last step for the first sum that $\pi$ maps
$\{1,\ldots,M\}$ onto itself and for the second sum that all $i>M$ are
fixed points for $\pi$. For the same reason, $\si^{(k)}_M\ge M$ is also
the index of the $M$-th failure in the permuted sequence
$Y_{\pi(i)}^{(k)}$.  
This
shows that applying $\pi$ does not change the distribution $\nu$ of
$\eta_k$. Therefore, recurrence to the right, as characterized in
Lemma \ref{tree2}, is invariant under permutations.  Since the proof
of Lemma \ref{tree2} did not 
use any results from Section~\ref{lit}, this proof is self-contained.

Similarly, one can show that the positivity of speed is invariant
under permutations. Indeed, in the proof of Theorem \ref{v0} we have
shown that $v>0$ iff $E_0[\sum_{k\ge 0}\tilde Z_k]<\infty$. By the
above argument, the distribution of $(\tilde Z_k)_{k\ge 0}$ remains unchanged
under permutations.  }
\end{rem}
\begin{rem}[Higher dimensions] {\rm 
Multi-dimensional ERW with cookies that induce a bias with
a non-negative projection in some fixed direction were considered in
\cite{bewi}, \cite{K03}, \cite{K05}, \cite{Ze06}, \cite{HH06} and \cite{BeR}.  A
special non-stationary cookie environment with two types of cookies
pointing into opposite directions was studied in \cite{ABK08}.
However, to the best of our knowledge so far there are no 
criteria for recurrence, transience or ballistic behavior
of ERWs in i.i.d.\ environments  of
``positive'' and ``negative'' cookies in higher dimensions.  To us, it
is not even clear how such criteria could look like.  In the following example we
shall indicate that the situation cannot be as simple as in one
dimension.
We shall show that, unlike for $d=1$ (see Remark \ref{order}),
permuting cookies in higher dimensions may change the sign of the velocity.
}
\end{rem}
\begin{ex} 
  \rm{ Let  $d\ge 4$.
    Denote by $\om(x,e,i)$ the probability for the ERW to jump from
    $x\in\Z^d$ to the nearest neighbor $x+e$ upon the $i$-th visit of
    $x$. Fix $0<\eps<1/(2d)$ and consider the two deterministic
    cookie environments $\om_k$, $k=1,2,$ defined by
\[\om_k(x,\pm e_j,i):=\frac{1}{2d}\pm\eps(-1)^{i+k}\won_{\{j=1;\,i\in\{1,2\}\}},
\quad x\in\Z^d,\ 1\le j\le d,\ i\ge 1,\] where $\{e_1,\ldots,e_d\}$ is
the canonical basis of $\Z^d$. In both environments, $\om_1$
and $\om_2$, there are $M=2$ cookies per site.  In the environment
$\om_1$ the walk experiences a drift into direction $e_1$ upon the
first visit to a site and an equal drift into the opposite direction
$-e_1$ upon the second visit.  Otherwise, it behaves just like a
simple symmetric random walk.  We shall show:
\begin{equation}\label{hin}
\mbox{There is some $v>0$ such that}\quad  
\lim_{n\to\infty}\frac{X_n}{n}= ve_1\quad\mbox{$P_{0,
\om_1}$-a.s.}.
\end{equation}
The environment $\om_2$ is obtained from $\om_1$ by permuting the two
cookies at each site.  By symmetry, we obtain from (\ref{hin}) that for
the same $v>0$, we have $P_{0,\om_2}$-a.s.\ $\lim_n X_n/n=-ve_1$.
Thus, in this example, permuting two cookies reverses the direction of
the speed.

For the proof of (\ref{hin}) denote by $R_{n,1}:=\{X_m\mid m<n\}$ the
range of the walk before time $n\in\N$ and by
$R_{n,2}:=\{x\in\Z^{d}\mid \exists k<m<n\ X_k=x=X_m\}$ the set of
vertices which have been visited at least twice before time $n$.
It is easy to see that both $\#R_{n,1}$ and $\#R_{n,2}$ tend to $\infty$ as $n\to\infty$.
Coupling
$(X_n)_{n\ge 0}$ in the natural way to a simple symmetric random walk
$(S_n)_{n\ge 0}$ on $\Z^d$ yields that $(X_n)_{n\ge 0}$ can be
represented as
\[X_n=S_n+2e_1a_n-2e_1b_n,\quad\mbox{where}
\quad a_n=\sum_{i=1}^{\#R_{n,1}}Y_{i,1}\quad\mbox{and}\quad
b_n=\sum_{i=1}^{\#R_{n,2}}Y_{i,2}
\] 
and $Y_{i,j}\  (i\ge 1, j=1,2)$ are independent and Bernoulli 
distributed with parameter $\eps$. Consequently,
\begin{eqnarray}\nonumber
  \frac{X_n\cdot e_1}{n}&=&\frac{S_n\cdot e_1}{n}+\frac{2a_n}{\#R_{n,1}} 
\frac{\#R_{n,1}}{n}
  -\frac{2b_n}{\#R_{n,2}} \frac{\#R_{n,2}}{n}\\
  &=&\frac{S_n\cdot e_1}{n}+\frac{2a_n}{\#R_{n,1}} \frac{\#R_{n,1}-\#R_{n,2}}{n}
  +\left(\frac{2a_n}{\#R_{n,1}}-\frac{2b_n}{\#R_{n,2}}\right)
  \frac{\#R_{n,2}}{n}.
\label{rn}
\end{eqnarray}
The first term on the right hand side of (\ref{rn}) tends to zero
$P_{0,\om_1}$-a.s..  The same holds for the last term in (\ref{rn})
since by the strong law of large numbers both $a_n/\#R_{n,1}$ and
$b_n/\#R_{n,2}$ tend to $\eps$ as $n\to\infty$ and $\#R_{n,2}/n$ is
bounded.  Therefore,
\begin{equation}\label{summ}
\liminf_{n\to\infty}\frac{X_n\cdot e_1}{n}\ge 2\eps \liminf_{n\to\infty}
\frac{\#R_{n,1}-\#R_{n,2}}{n}
\end{equation}
To bound the right hand side of (\ref{summ}) from below we introduce
the projection $\pi:\Z^d\to\Z^{d-1}$ defined by
$\pi(x_1,x_2,\ldots,x_d):=(x_2,\ldots,x_d)$ onto the subspace spanned
by $e_2,\ldots,e_d$ and consider the process $(X'_n)_{n\ge 0}$ defined
by $X'_n:=\pi(X_n)$.  Under $P_{0,\om_1}$, $(X'_n)_{n\ge 0}$
is a simple symmetric random walk on $\Z^{d-1}$ with holding times
which are i.i.d.\ and geometrically distributed with parameter
$1-1/d$.  As above, denote by $R'_{n,1}:=\{X'_m\mid m<n\}$ the
range of this walk before time $n\in\N$ and by
$R'_{n,2}:=\{x\in\Z^{d-1}\mid \exists k<m<n\ X'_k=x=X'_m\}$ the set of
vertices which have been visited at least twice before time $n$ by
$(X'_n)_n$.

Returning to the right hand side of (\ref{summ}) we first note that
$\#R_{n,1}-\#R_{n,2}=\#(R_{n,1}\backslash R_{n,2})$ is the number of
vertices which have been visited exactly once by $(X_n)_n$ before time
$n$.  This number is greater than or equal to the number
$\#(R'_{n,1}\backslash R'_{n,2})$ of vertices which have been visited
exactly once by the projected walk $(X'_n)_n$ before time $n$.
According to \cite{Pi74} this last number satisfies a strong law of
large numbers and grows like $p^2n$, where $p$ is the probability that
$(X'_n)_n$ never returns to its starting point. Since $d\ge 4$ and
simple symmetric random walk in three or more dimensions is transient,
we have $p>0$.  Therefore, by (\ref{summ}),
\begin{equation}\label{ski}
\liminf_{n\to\infty}\frac{X_n\cdot e_1}{n}\ge 2\eps p^2>0.
\end{equation}
In particular, $(X_n\cdot e_1)_n$ is transient to the right. Using a
renewal structure like in \cite{SZ99} for RWRE and in \cite{BeR} for
once-ERW and as outlined in Section \ref{rs} this implies that
$(X_n\cdot e_1)_n$ satisfies a strong law of large numbers under
$P_{0,\om_1}$, i.e.\ $P_{0,\om_1}$-a.s., $(X_n\cdot e_1)/n\to v$ for
some $v\ge 0$. Due to (\ref{ski}) we even have $v>0$. Since obviously
$P_{0,\om_1}$-a.s.\ $X'_n/n\to 0$ , this yields the statement
(\ref{hin}).  }
\end{ex}

\textbf{Open questions.} We have already mentioned that not much is
known about positively and negatively ERW in $d\ge
2$. Below we shall discuss $d=1$. It might be also interesting to
consider ERW on strips $\Z\times \{0,1,\dots,L\}$, $L\ge 1$.

\textit{(a) Recurrent regime.} For the case of non-negative cookies
and $\delta<1$, D.\,Dol\-go\-pyat has shown, \cite{Do08}, (the case of
strips was also considered) that, under some assumptions,
$\frac{X_{[nt]}}{\sqrt{n}}$ converges in law to the unique pathwise solution $W(t)$ (see \cite{CD99})
of the equation
\[W(t)=B(t)+\delta\left(\max_{[0,t]} W(t)-\min_{[0,t]} W(t)\right), \]
where $B(t)$ is the Brownian motion with variance $t$. Can this
result be extended to positively and negatively excited random
walks? What happens in the case $|\delta|=1$?

\textit{(b) Transient regime with zero linear speed.} A.-L.\ Basdevant
and A.\ Singh obtained in \cite{BS07} for non-negative (deterministic)
cookie environments and $1<\delta\le 2$ the following results.
  \begin{itemize}
  \item [1.] If $\delta\in (1,2)$ then $\frac{X_n}{n^{\delta/2}}$
    converges in law to a random variable 
    $S^{-\delta/2}$, where $S$ is a positive strictly stable random variable with index $\delta/2$, i.e.\ with
    Laplace transform $E[e^{-\la S}]=e^{-c\la^{\delta/2}}$ for some $c>0$.
  \item [2.] If $\delta=2$ then $\frac{X_n}{n/\log n}$ converges in
    probability to a positive constant.
  \end{itemize}
  Their proof is based on the study of branching processes with
  migration but uses the assumption that all cookies are non-negative.
  The same result with essentially the same proof might hold in the
  more general setting studied in the current paper.  Is there a
  result for $\delta=2$ similar to (ii) of \cite{KKS75}?

\textit{(c) Transient regime with positive linear speed.} We do not
  know whether our condition $\delta>4$ for the validity of the central limit
  theorem is optimal. How does the process scale for $\delta\in(2,4]$?
  Is the behavior similar to (iii) and (iv) of
  \cite{KKS75}?

\bibliographystyle{amsalpha}
\vspace*{5mm}
{\sc \small
\begin{tabular}{ll}
Department of Mathematics& Mathematisches Institut\\
Baruch College&Universit\"at T\"ubingen\\
One Bernard Baruch Way, Box B6-230&Auf der Morgenstelle 10\\
New York, NY 10010, U.S.A.&72076 T\"ubingen, Germany\\
{\rm elena.kosygina@baruch.cuny.edu} & {\rm martin.zerner@uni-tuebingen.de} 
\end{tabular}}
\end{document}